\documentclass[10pt,reqno]{amsart}
\usepackage{latexsym,amsmath,amssymb,amscd}
\usepackage[all]{xy}
\usepackage{enumerate}
\usepackage{color}
\usepackage
{hyperref}


   
\setcounter{tocdepth}{1} 
\makeatletter
\newcommand\@dotsep{4.5}
\def\@tocline#1#2#3#4#5#6#7{\relax
	\ifnum #1>\c@tocdepth 
	\else
	\par \addpenalty\@secpenalty\addvspace{#2}%
	\begingroup \hyphenpenalty\@M
	\@ifempty{#4}{%
		\@tempdima\csname r@tocindent\number#1\endcsname\relax
	}{%
		\@tempdima#4\relax
	}%
	\parindent\z@ \leftskip#3\relax \advance\leftskip\@tempdima\relax
	\rightskip\@pnumwidth plus1em \parfillskip-\@pnumwidth
	#5\leavevmode\hskip-\@tempdima #6\relax
	\leaders\hbox{$\m@th
		\mkern \@dotsep mu\hbox{.}\mkern \@dotsep mu$}\hfill
	\hbox to\@pnumwidth{\@tocpagenum{#7}}\par
	\nobreak
	\endgroup
	\fi}
\makeatother 

\begin{document}


\makeatletter
\@addtoreset{figure}{section}
\def\thefigure{\thesection.\@arabic\c@figure}
\def\fps@figure{h,t}
\@addtoreset{table}{bsection}

\def\thetable{\thesection.\@arabic\c@table}
\def\fps@table{h, t}
\@addtoreset{equation}{section}
\def\theequation{
\arabic{equation}}
\makeatother

\newcommand{\bfi}{\bfseries\itshape}

\newtheorem{theorem}{Theorem}[section]
\newtheorem{corollary}[theorem]{Corollary}
\newtheorem{definition}[theorem]{Definition}
\newtheorem{example}[theorem]{Example}
\newtheorem{examples}[theorem]{Examples}
\newtheorem{lemma}[theorem]{Lemma}
\newtheorem{notation}[theorem]{Notation}
\newtheorem{convention}[theorem]{Convention}
\newtheorem{proposition}[theorem]{Proposition}
\newtheorem{remark}[theorem]{Remark}
\numberwithin{equation}{section}

\renewcommand{\1}{{\bf 1}}
\newcommand{\A}{{\mathcal{A}}}
\newcommand{\AB}{{\mathbb{AB}}}
\newcommand{\Ad}{{\rm Ad}}
\newcommand{\anc}{{\bf a}}
\newcommand{\Aut}{{\rm Aut}\,}
\newcommand{\ad}{{\rm ad}}
\newcommand{\Atiyah}{\mathfrak{Atiyah}}
\newcommand\BHLG{\mathbb{BHLG}}
\newcommand\BLA{\mathbb{BLA}}
\newcommand{\bm}{{\bf m}}
\newcommand{\card}{{\rm card}}
\newcommand{\Ci}{{\mathcal C}^\infty}
\newcommand{\Der}{{\rm Der}\,}
\newcommand{\de}{{\rm d}}
\newcommand{\ee}{{\rm e}}
\newcommand{\Ehr}{\mathfrak{Ehr}}
\newcommand{\End}{{\rm End}\,}
\newcommand{\Fr}{{\mathsf{F}}}
\newcommand{\ev}{{\rm ev}}
\newcommand{\gauge}{\mathfrak{gauge}}
\newcommand{\GL}{{\rm GL}}
\newcommand{\Gr}{{\rm Gr}}
\newcommand{\graf}{{\mathsf{G}}}
\newcommand{\Hom}{{\rm Hom}}
\newcommand{\hotimes}{\widehat{\otimes}}
\newcommand{\id}{{\rm id}}
\newcommand{\ie}{{\rm i}}
\newcommand{\inc}{{\bf j}}
\newcommand{\gl}{{{\mathfrak g}{\mathfrak l}}}
\newcommand{\Ker}{{\rm Ker}\,}
\newcommand\LAB{{LAB}}
\newcommand{\Lie}{\text{\bf L}}
\newcommand{\LieF}{\mathfrak{Lie}}
\newcommand{\PB}{{\mathbb{PB}}}
\newcommand{\pr}{{\rm pr}}
\newcommand{\Ran}{{\rm Ran}\,}
\renewcommand{\Re}{{\rm Re}\,}
\newcommand{\spa}{{\rm span}\,}
\newcommand{\Tr}{{\rm Tr}\,}
\newcommand{\U}{{\rm U}}

\newcommand{\Ac}{{\mathcal A}}
\newcommand{\Bc}{{\mathcal B}}
\newcommand{\Cc}{{\mathcal C}}
\newcommand{\Dc}{{\mathcal D}}
\newcommand{\Ec}{{\mathcal E}}
\newcommand{\Fc}{{\mathcal F}}
\newcommand{\Gc}{{\mathcal G}}
\newcommand{\Hc}{{\mathcal H}}
\newcommand{\Jc}{{\mathcal J}}
\newcommand{\Kc}{{\mathcal K}}
\newcommand{\Lc}{{\mathcal L}}
\renewcommand{\Mc}{{\mathcal M}}
\newcommand{\Nc}{{\mathcal N}}
\newcommand{\Oc}{{\mathcal O}}
\newcommand{\Pc}{{\mathcal P}}
\newcommand{\Rc}{{\mathcal R}}
\newcommand{\Sc}{{\mathcal S}}
\newcommand{\Tc}{{\mathcal T}}
\newcommand{\Vc}{{\mathcal V}}
\newcommand{\Uc}{{\mathcal U}}
\newcommand{\Xc}{{\mathcal X}}
\newcommand{\Yc}{{\mathcal Y}}
\newcommand{\Zc}{{\mathcal Z}}
\newcommand{\Wc}{{\mathcal W}}

\newcommand{\Ag}{{\mathfrak A}}
\newcommand{\Bg}{{\mathfrak B}}
\newcommand{\Cg}{{\mathfrak C}}
\newcommand{\Fg}{{\mathfrak F}}
\newcommand{\Gg}{{\mathfrak G}}
\newcommand{\Ig}{{\mathfrak I}}
\newcommand{\Jg}{{\mathfrak J}}
\newcommand{\Lg}{{\mathfrak L}}
\newcommand{\Mg}{{\mathfrak M}}
\newcommand{\Pg}{{\mathfrak P}}
\newcommand{\Sg}{{\mathfrak S}}
\newcommand{\Xg}{{\mathfrak X}}
\newcommand{\Yg}{{\mathfrak Y}}
\newcommand{\Zg}{{\mathfrak Z}}

\newcommand{\ag}{{\mathfrak a}}
\newcommand{\bg}{{\mathfrak b}}
\newcommand{\dg}{{\mathfrak d}}
\renewcommand{\gg}{{\mathfrak g}}
\newcommand{\hg}{{\mathfrak h}}
\newcommand{\kg}{{\mathfrak k}}
\newcommand{\mg}{{\mathfrak m}}
\newcommand{\n}{{\mathfrak n}}
\newcommand{\og}{{\mathfrak o}}
\newcommand{\pg}{{\mathfrak p}}
\newcommand{\sg}{{\mathfrak s}}
\newcommand{\tg}{{\mathfrak t}}
\newcommand{\ug}{{\mathfrak u}}
\newcommand{\zg}{{\mathfrak z}}

\newcommand{\bi}{\mathbf{i}}
\newcommand{\bs}{\mathbf{s}}
\newcommand{\bt}{\mathbf{t}}

\newcommand{\BB}{\mathbb B}
\newcommand{\CC}{{\mathbb C}}
\newcommand{\EE}{\mathbb E}
\newcommand{\FF}{\mathbb F}
\newcommand{\HH}{{\mathbb H}}
\newcommand{\MM}{{\mathbb M}}
\newcommand{\NN}{\mathbb N}
\newcommand{\QQ}{\mathbb Q}
\newcommand{\RR}{{\mathbb R}}
\newcommand{\TT}{{\mathbb T}}
\newcommand{\ZZ}{\mathbb Z}

\newcommand{\Q}{{\textbf{Q}}}
\newcommand{\tto}{\rightrightarrows}

\title[Transitive Lie algebroids and \Q-manifolds]{Transitive Lie algebroids and \Q-manifolds}

\author[D. Belti\c t\u a, F. Pelletier]{Daniel Belti\c t\u a, Fernand Pelletier$^\dagger$} 

\address{Institute of Mathematics ``Simion Stoilow'' 
	of the Romanian Academy, 
	P.O. Box 1-764, Bucharest, Romania}
\email{Daniel.Beltita@imar.ro, beltita@gmail.com}

\address{Unit\'e Mixte de Recherche 5127 CNRS, Universit\'e  Savoie Mont Blanc, Laboratoire de Math\'ematiques (LAMA), Campus Scientifique, 73370 Le Bourget-du-Lac, France}
\email{fernand.pelletier@univ-smb.fr, fer.pelletier@gmail.com}


\thanks{$^\dagger$Fernand Pelletier passed away on 19.01.2026.}


\begin{abstract} 
	We introduce the notions of locally trivial \Q-groupoid and  principal \Q-bundle, 
	which are the appropriate versions of locally trivial Lie groupoids and principal fibre bundles in the framework of R. Barre's \Q-manifolds. 
	As an application, we prove that every transitive Lie algebroid over a second countable, smooth manifold arises from the Atiyah sequence of a certain principal \Q-bundle. Consequently, transitive Lie algebroids over second countable, smooth manifolds are integrated to locally trivial \Q-groupoids. \\
	\textit{Key words}: principal bundle; foliation; Lie algebroid; groupoid \\
	\textit{2020 MSC}: Primary 58H05; Secondary 22A22, 22E15
\end{abstract}

\maketitle

\tableofcontents

\section{Introduction
}

Besides differential geometry, the Lie algebroids play an important role in some areas of physics, classical mechanics, and completely integrable systems. 
For one thing, several problems in these areas can be stated in terms of Poisson brackets,  which provide basic examples of Lie algebroids, cf. e.g., \cite{DoJa21}, \cite{DoJa23}, and the references therein.  
Another important class of Lie algebroids arises from actions of Lie algebras on smooth manifolds. 
Integration of these Lie algebra actions to Lie group actions is an old problem, 
and its study involved certain quotients of Lie groups by non-closed subgroups, 
cf. \cite{KaMi04} and \cite{KaMi22}. 
In this paper we develop tools that allow us to present the category of \emph{all} transitive Lie algebroids over second countable, smooth manifolds as the target of a Lie functor defined on a category of groupoids with a strong differentiable flavor. 

It is well known that Sophus Lie's third classical theorem does not carry over to Lie algebroids, and its range of validity was precisely described in \cite{CrFe03}.
As  there exist transitive Lie algebroids that do not arise from any Lie groupoid, 
it follows that the classical notion of differentiable manifold is not wide enough for the purposes of integrating all transitive Lie algebroids to differentiable groupoids. 
It is therefore remarkable that one can completely solve that integrability problem over second countable, smooth manifolds  by using one of the narrowest generalizations of smooth manifolds that were proposed so far, namely, R.~Barre's \Q-manifolds \cite{Br73}, and we show in the present paper how one can do that. 

Before presenting our results in more detail, let us note that the integrability problems for some classes of Lie algebroids were previously solved in the framework of wider notions of differentiability, such as the ones provided by diffeological spaces or even Fr\"olicher spaces, cf. e.g., \cite{Ig95}, \cite{An23}, \cite{Vi23}. 
In order to illustrate the gaps between the various notions of differentiable spaces that we have mentioned so far, namely 
$$\{\text{smooth manifolds}\} \subset
\{\text{\Q-manifolds}\} \subset
\{\text{Fr\"olicher spaces}\} \subset
\{\text{diffeological spaces}\}  $$
we briefly recall some basic examples: 
\begin{enumerate}[$1^\circ$]
	\item If $\Lambda$ is a countable subgroup of the additive group $\RR$, then $\RR/\Lambda$ is always a \Q-manifold, but it is a smooth manifold if and only if $\Lambda$ is discrete in~$\RR$. 
	\item If $\Lambda\subseteq\RR$ is dense, then $\RR/\Lambda$ is a 1-dimensional \Q-manifold, but it is a trivial Fr\"olicher space, i.e., it  has the same Fr\"olicher structure as a singleton $\{*\}$.
	\item There is an equivalence relation on $\RR^2$ whose corresponding quotient space $\RR^2/\sim$ is a Fr\"olicher space which is not a \Q-manifold, cf. \cite{Pr79}. 
	\item More generally, the leaf space of any regular foliation without holonomy is a Fr\"olicher space but, in general, not a \Q-manifold, even on Banach manifolds, cf. \cite[Prop. 3.13]{BP22}.  
\end{enumerate}
A detailed discussion of the relations between the above notions of differentiable spaces is beyond the scope of the present paper, especially since  
several recent accounts are available., e.g., 
\cite{BtKaWa25}, \cite{GMW24}, and  \cite[App. A]{BP22}. 

The impossibility of integrating arbitrary transitive Lie algebroids to Lie group\-oids, 
which was pointed out in \cite{AM85}, is encoded in the occurrence of certain quotients of Lie groups by \emph{countable} normal subgroups that may not be discrete, hence their corresponding quotient groups are not Lie groups in general. 
(See also \cite{Me21}. 
Integrability of the related algebraic structures of Leibniz or Courant algebroids was studied in \cite{LW16} and \cite{LW20}.)
We show in the present paper that the above countability property allows one to integrate the transitive Lie algebroids over second countable, smooth manifolds to transitive groupoids that are endowed with \Q-manifold structures which are compatible with the groupoid structures, so we call them \Q-groupoids 
(Corollary~\ref{integration}). 
Our approach consists of two main steps: 
\begin{itemize}
	\item[Step 1.] The theory of principal bundles in the framework of \Q-manifolds is developed sufficiently to allow us to show that every abstract Atiyah sequence over a second countable, smooth manifold arises from a principal \Q-bundle (Theorem~\ref{AP_th}). 
	\item[Step 2.] We show that the classical relation between smooth principal bundles and transitive Lie groupoids carries over to the setting of \Q-manifolds 
	(Theorems \ref{GaugeQGroupoid} and \ref{isoloctricQgroupoid}). 
\end{itemize}
While these steps are the same as in the classical framework of smooth manifolds, 
their realization in the framework of \Q-manifolds involves quite a few additional difficulties due to the lack of injectivity of \Q-charts and other specific aspects. 
See for instance the proof of Theorem~\ref{isoloctricQgroupoid}, or the discussion of the Lie functor prior to Proposition~\ref{Liefunctor_prop}, which must deal with the fact 
that, for a \Q-groupoid $\Gc\tto M$, the unit map $\1_M\colon M\to\Gc$ may not be an embedding 
even in the special case when the groupoid $\Gc$ is a group. 
The tools for addressing these specific issues are developed in Appendices \ref{AppA} and \ref{AppB}.  

In some more detail, our paper is organized as follows: 
Section~\ref{Sect2} contains preliminary facts on Lie algebroids and on \Q-manifolds, particularly \Q-groups. 
In Sections \ref{Sect3}--\ref{Sect4} we introduce the notion of principal \Q-bundle, 
and their corresponding Atiyah functor. 
In Section~\ref{Sect5} we achieve the Step~1 above, specifically the integration of 
abstract Atiyah sequences over second countable, smooth manifolds  (Theorem~\ref{AP_th}). 
In Section~\ref{Sect6} we introduce the \Q-groupoids and we develop the Lie theory for them to the extent that is needed for our present purposes in this paper. 
Section~\ref{Sect7} contains a discussion of the special properties of locally trivial \Q-groupoids. 
In Section~\ref{Sect8} we study the gauge groupoids of principal \Q-bundles and we achieve Step~2 above, which leads to the integration of general transitive Lie algebroids over second countable, smooth manifolds  to locally trivial \Q-groupoids (Corollary~\ref{integration}). 
The paper also contains two appendices. 
We discuss in Appendix~\ref{AppA} a feature that distinguishes the \Q-groups and \Q-groupoids from their classical counterparts, specifically, the fact that the space of units is not an embedded submanifold in general, and yet it does have the property of initial submanifold. 
This property plays a key role in the construction of the Lie algebroid of a \Q-groupoid.  
We emphasize that this is not merely a technical issue but is one of the 
key points in the integration of general transitive Lie algebroids. 
Finally, in Appendix~\ref{AppB}, we establish some properties of vector distributions on \Q-manifolds that are also needed in the construction of the Lie functor for \Q-groupoids.

\section{Preliminaries}
\label{Sect2}

In this section we collect a few auxiliary facts 
that are divided into two main topics: Lie algebroids (Subsection~\ref{subsect2.1}) 
and \Q-groups (Subsection~\ref{subsect2.2}). 
Our basic references for Lie groupoids and Lie algebroids are \cite{Ma05} and \cite{MoMr03}, 
and we refer to  \cite{Br73} for \Q-manifolds and \Q-groups.  
(See also \cite{BPZ19} and \cite{BP22}, as well as \cite{Mo75} and \cite{Mo77}.) 

\subsection{Preliminaries on Lie algebroids} 
\label{subsect2.1}

\begin{definition}[trivial Lie algebroid]
\label{trivLiealg_def}
\normalfont 
If $U$ is a smooth manifold and $\gg$ is a finite-dimensional real Lie algebra, 
then the \emph{trivial Lie algebroid} with base $U$ and fibre~$\gg$ is the Whitney sum of vector bundles 
$$A:=TU\oplus(U\times\gg)\to U$$ 
whose anchor is given by the first Cartesian projection $\anc:=\pr_1\colon TU\oplus(U\times\gg)\to TU$ and with the Lie bracket on the section space $\Gamma(A)\simeq\Xc(U)\oplus\Ci(U,\gg)$ defined by 
\begin{equation}
\label{trivLiealg_def_eq1}
[X\oplus v,Y\oplus w]:=[X,Y]\oplus(X(w)-Y(v)+[v,w])
\end{equation}
for all $X,Y\in\Xc(U)$ and $v,w\in\Ci(U,\gg)$. 
See for instance \cite[Ex. 3.3.3]{Ma05}. 
\end{definition}

\begin{remark}[morphisms of Lie algebroids]
\label{trivLiealg_morph}
\normalfont 
Recall from \cite[\S 4.2]{Ma05} that if $q\colon A\to M$ is a Lie algebroid with its anchor $\anc\colon A\to TM$, 
and $f\colon N\to M$ is a smooth map, then one defines 
\begin{equation}
\label{trivLiealg_morph_eq1}
f^{!!}A:=TN\oplus_{TM} A:=\{(X,v)\in TN\times A\mid (Tf)(X)=\anc(v)\}. 
\end{equation}
If one of the following conditions is satisfied: 
\begin{itemize}
	\item $f$ is a surjective submersion, or 
	\item the Lie algebroid $A$ is transitive, 
\end{itemize}
then $f^{!!}A$ has the natural structure of a Lie algebroid 
with its bundle projection $q^{!!}\colon f^{!!}A\to N$, $q^{!!}(X,v):=n$ if $X\in T_nN$ 
and with its anchor $\anc^{!!}\colon f^{!!}A\to TN$, $\anc^{!!}(X,v):=X$. 

Now assume additionally that $q'\colon A'\to N$ is a Lie algebroid with its anchor $\anc'\colon A'\to TN$, and $\varphi\colon A'\to A$ is a morphism of vector bundles over the mapping $f\colon N\to M$. 
Then  $\varphi$ is a \emph{morphism of Lie algebroids} if and only if the following conditions are satisfied: 
\begin{enumerate}[{\rm(i)}]
	\item\label{trivLiealg_morph_item1} $\anc\circ \varphi=Tf\circ\anc'$; 
	\item\label{trivLiealg_morph_item2} the mapping 
	\begin{equation}
	\label{trivLiealg_morph_eq2}
	\varphi^{!!}\colon A'\to f^{!!}A, \quad \varphi^{!!}(v'):=(\anc'(v'),\varphi(v')), 
\end{equation}
is a morphism of Lie algebroids over the identity map $\id_N$, i.e., $\anc^{!!}\circ \varphi^{!!}=\anc'$ (which obviously holds true) and $\varphi^{!!}\circ[\sigma_1,\sigma_2]=[\varphi^{!!}\circ\sigma_1,\varphi^{!!}\circ\sigma_2]$ for all $\sigma_1,\sigma_2\in\Gamma(A')$. 
\end{enumerate}
See \cite[\S 4.3]{Ma05} for more details. 
\end{remark}

\begin{remark}[localization of morphisms of Lie algebroids]
	\label{trivLiealg_morph_loc}
	\normalfont 
We recall from \cite[Prop. 3.3.2]{Ma05} that if $q\colon A\to M$ is a Lie algebroid with its anchor $\anc\colon A\to TM$, then for every open subset $U\subseteq M$ 
the restricted vector bundle 
$$q_U:=q\vert_{q^{-1}(U)}\colon A_U:=q^{-1}(U)\to U$$ 
has the natural structure of a Lie algebroid 
with its anchor 
$$\anc_U:=\anc\vert_{A_U}\colon A_U\to TU$$ 
and 
with its Lie bracket $\Gamma(A_U)\times \Gamma(A_U)\to \Gamma(A_U)$ obtained by restriction of the Lie bracket $\Gamma(A)\times \Gamma(A)\to \Gamma(A)$. 

Now let $q'\colon A'\to N$ be another Lie algebroid with its anchor $\anc'\colon A'\to TN$ and $\varphi\colon A'\to A$ a morphism of vector bundles over the mapping $f\colon N\to M$. 
We claim that \emph{if the Lie algebroid $A$ is transitive, then $\varphi$ is a morphism of Lie algebroids if and only if for every $n\in N$ there exist open subsets $U\subseteq N$ and $V\subseteq M$ with $n\in U$, $f(n)\in V$, 
$f(U)\subseteq V$ (hence $\varphi(A'_U)\subseteq A_V$), such that the commutative diagram 
\begin{equation*}
\xymatrix{A'_U \ar[d]_{q'_U} \ar[r]^{\varphi_{UV}} & A_V \ar[d]^{q_V} \\
U \ar[r]_{f_{UV}}& V}
\end{equation*}
is a morphism of Lie algebroids, where $f_{UV}:=f\vert_U\colon U\to V$,   $\varphi_{UV}:=\varphi\vert_{A'_U}\colon A'_U\to A_V$, $q'_U:=q'\vert_{A'_U}\colon A'_U\to U$, and $q_V:=q\vert_{A_V}\colon A_V\to V$}. 

We prove that claim in three steps: 

Step 1: If $N=M$ and $f=\id_M$, then the assertion is clear. 

Step 2: For any  open subsets $U\subseteq N$ and $V\subseteq M$ with 
$f(U)\subseteq V$, 
we have 
\begin{align*}
	f_{UV}^{!!}A_V
	&=TU\oplus_{TV} A_V=\{(X,v)\in TU\times A_V\mid (Tf_{UV})(X)=\anc(v)\} \\
	&=\{(X,v)\in TN\times A\mid (Tf)(X)=\anc(v),\ X\in TU \} \\
	&=(q^{!!})^{-1}(U) \\
	&=(f^{!!}A)_U
\end{align*}
and  
$$\varphi_{UV}^{!!}\colon A'_U\to f_{UV}^{!!}A_V, \quad 
\varphi_{UV}^{!!}(v'):=(\anc'_U(v'),\varphi_{UV}(v'))=(\anc'(v'),\varphi(v'))$$
hence 
\begin{equation*}
	\varphi_{UV}^{!!}=\varphi^{!!}\vert_{A'_U}\colon A'_U\to (f^{!!}A)_U.
\end{equation*}
\par 
Step 3: 
For the general case, we first note that if the Lie algebroid $A$ is transitive, then for every open subset $V$ the restricted Lie algebroid $A_V$ is transitive as well, 
hence the above map $\varphi_{UV}\colon A'_U\to A_V$ is a morphism of Lie algebroids over $f_{UV}\colon U\to V$ if and only if the conditions \eqref{trivLiealg_morph_item1}--\eqref{trivLiealg_morph_item2} in Remark~\ref{trivLiealg_morph} are satisfied. 
By Step~1 above, these conditions \eqref{trivLiealg_morph_item1}--\eqref{trivLiealg_morph_item2} for $\varphi_{UV}^{!!}$ 
are satisfied for all $U\in\Uc$ and $V\in\Vc$ 
for some open coverings $\Uc$ of $N$ and $\Vc$ of $M$, 
if and only if \eqref{trivLiealg_morph_item1}--\eqref{trivLiealg_morph_item2}  
are satisfied for $\varphi^{!!}$, 
that is, if and only if  $\varphi$ is a morphism of Lie algebroids over $f\colon N\to M$. 
This completes the proof of our claim. 
\end{remark}

\begin{remark}
	\label{trivLiealg_end}
	\normalfont 
	The endomorphisms (in particular automorphisms) of trivial Lie algebroids over the identity map of the base were explicitly described in \cite[\S 2]{BlKuWa02} and \cite[Ex. 3.3.3]{Ma05}, as follows. 
	Assuming the notation of Definition~\ref{trivLiealg_def}, 
	let $\varphi\colon A\to A$ be an endomorphism of the vector bundle $A=TU\oplus(U\times\gg)\to U$ over the identity map $\id_U$ of the base $U$. Then $\varphi$ is a morphism of Lie algebroids if and only if there exist a smooth $\gg$-valued 1-form $\omega\colon TU\to\gg$ and a smooth map $\varphi^+\colon \U\to\Hom(\gg)$ taking values in the space of Lie algebra endomorphisms of $\gg$, satisfying the conditions 
	\begin{enumerate}[{\rm(i)}]
		\item $\varphi(X,v)=(X,\omega(X)+\varphi^+(m)v)$ if $m\in U$, $X\in T_mU$, and $v\in\gg$; 
		\item $\de \omega(X,Y)+[\omega(X),\omega(Y)]=0$ if $X,Y\in\Xc(U)$; 
		\item $X(\varphi^+(v))-\varphi^+(X(v))+[\omega(X),\varphi^+(v)]=0$ if $X\in\Xc(U)$ and $v\in\Ci(U,\gg)$. 
	\end{enumerate}
	The last condition is the same as \cite[\S 2, Th. 2.1(W2)]{BlKuWa02}, since 
	\begin{equation*}
		\de\varphi(X)\cdot v=[\varphi\cdot v,\omega(X)]\text{ if } X\in\Xc(U)\text{ and }v\in\Ci(U,\gg)
	\end{equation*}
	where $(\varphi\cdot v)(m)=\varphi(m)v(m)$ and $(\de\varphi(X)\cdot v)(m)=(\de\varphi(X(m))(v(m))$ 
	for every $m\in M$. 
	(Compare with \eqref{deriv_eq1}.) 
	This description of endomorphisms of trivial Lie algebroids is needed in Example~\ref{triv}.
\end{remark}

\begin{definition}[Lie algebroid atlas]
	\label{atlas}
	\normalfont 
Let $N$ be a smooth manifold and $\gg$ be a finite-dimensional real Lie algebra. 
A \emph{Lie algebroid atlas with fibre~$\gg$}
for a vector bundle $p\colon A\to N$, 
associated with an open covering $(U_i)_{i\in I}$ of $N$,
is a family of 
isomorphisms of vector bundles 
$(S_i\colon TU_i\oplus(U_i\times\gg)\to A_{U_i})_{i\in I}$ 
such that if $i,j\in I$ and $U_{ij}:=U_i\cap U_j\ne\emptyset$ then 
the overlap isomorphism of vector bundles  
$$S_i^{-1}\circ S_j\colon TU_{ij}\oplus(U_{ij}\times\gg)\to TU_{ij}\oplus(U_{ij}\times\gg)$$
is an isomorphism of (trivial) Lie algebroids. 
Here $A_{U_i}$ denotes the restriction of the vector bundle $A$ to $U_i\subseteq N$ for  $i\in I$. 
See \cite[Def. 5.4.1]{Ma05} for more details. 
\end{definition}

The following fact is essentially obtained by the method of proof of \cite[Th. 8.2.6]{Ma05}.

\begin{lemma}
\label{atlas_lemma}
In Definition~\ref{atlas}, the vector bundle $p\colon A\to N$ has a unique structure of transitive Lie algebroid for which the restriction map $\rho^N_{U_i}\colon\Gamma(A)\to \Gamma(A_{U_i})$ is a Lie algebra homomorphism, $S_i\colon TU_i\oplus(U_i\times\gg)\to A_{U_i}$ is a Lie algebroid isomorphism, and the anchor maps $a\colon A\to TN$ and $a_i\colon A_{U_i}\to U_i$ satisfy $a\vert_{A_{U_i}}=a_i$ for every $i\in I$. 
\end{lemma}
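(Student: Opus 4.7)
The plan is to build the Lie algebroid structure on $A$ by transporting the trivial Lie algebroid structures through the charts $S_i$, and then gluing. Concretely, for each $i\in I$ transport the trivial anchor and bracket of $TU_i\oplus(U_i\times\gg)$ through the vector bundle isomorphism $S_i$ to obtain an anchor $a_i\colon A_{U_i}\to TU_i$ and a Lie bracket $[\cdot,\cdot]_i$ on $\Gamma(A_{U_i})$, turning $(A_{U_i},a_i,[\cdot,\cdot]_i)$ into a transitive Lie algebroid for which $S_i$ is, by construction, an isomorphism of Lie algebroids.

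Next, I would verify that these local structures agree on every nonempty intersection $U_{ij}=U_i\cap U_j$. The hypothesis that $S_i^{-1}\circ S_j$ is an isomorphism of trivial Lie algebroids means precisely that this map intertwines the first projections (the anchors) and the trivial Lie brackets on sections. Transporting this identity through $S_i$ and $S_j$ gives $a_i\vert_{A_{U_{ij}}}=a_j\vert_{A_{U_{ij}}}$ and $[\sigma\vert_{U_{ij}},\tau\vert_{U_{ij}}]_i=[\sigma\vert_{U_{ij}},\tau\vert_{U_{ij}}]_j$ for every $\sigma,\tau\in\Gamma(A_{U_{ij}})$. Consequently the anchors $a_i$ glue into a single bundle morphism $a\colon A\to TN$ with $a\vert_{A_{U_i}}=a_i$, and for each pair of global sections $\sigma,\tau\in\Gamma(A)$ the family of local sections $[\sigma\vert_{U_i},\tau\vert_{U_i}]_i\in\Gamma(A_{U_i})$ satisfies the cocycle compatibility, so by the sheaf property of $\Gamma(A)$ it defines a unique global section $[\sigma,\tau]\in\Gamma(A)$.

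It remains to check that the pair $(a,[\cdot,\cdot])$ is a Lie algebroid structure and that it is transitive. Both $\RR$-bilinearity, antisymmetry, the Jacobi identity, the Leibniz rule $[\sigma,f\tau]=f[\sigma,\tau]+(a(\sigma)f)\tau$, and the anchor being a Lie bracket homomorphism are properties that can be verified locally on each $U_i$, where they reduce to the corresponding identities for the trivial Lie algebroid $TU_i\oplus(U_i\times\gg)$ under the isomorphism $S_i$. Transitivity is immediate: the anchor of the trivial Lie algebroid is the (surjective) first projection, so $a_i$ is surjective on each $U_i$, hence so is $a$. The construction also shows that $\rho^N_{U_i}\colon\Gamma(A)\to\Gamma(A_{U_i})$ is a Lie algebra homomorphism and that each $S_i$ is a Lie algebroid isomorphism, so existence is established. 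Uniqueness is forced at once by the requirement that $S_i$ be a Lie algebroid isomorphism: this pins down the anchor and the bracket on each $A_{U_i}$, and the sheaf property then determines them globally.

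The main obstacle I expect is the gluing of the Lie bracket, and in particular checking that $[\sigma\vert_{U_i},\tau\vert_{U_i}]_i$ and $[\sigma\vert_{U_j},\tau\vert_{U_j}]_j$ really agree on $U_{ij}$ for arbitrary global sections. This is where the Leibniz rule combined with the locality of the trivial Lie bracket (whose formula \eqref{trivLiealg_def_eq1} involves only values, first derivatives, and the pointwise Lie bracket on $\gg$) is essential, and where one must unwind the definition of an isomorphism of trivial Lie algebroids from Remark~\ref{trivLiealg_end} — possibly also appealing to Remark~\ref{trivLiealg_morph_loc}, which ensures that the Lie algebroid morphism condition on overlaps can be checked locally in the transitive setting.
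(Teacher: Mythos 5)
Your proposal is correct and follows essentially the same route as the paper: transport the trivial structures through the charts $S_i$, use the overlap condition to see the local brackets and anchors agree on restrictions, and glue via the sheaf property of sections (which the paper phrases as identifying $\Gamma(A)$ with $\Ker(\rho_1-\rho_2)$ inside $\prod_i\Gamma(A_{U_i})$), checking the Leibniz rule and transitivity locally. The point you flag as the main obstacle — locality of the trivial bracket ensuring restriction commutes with the bracket — is exactly the ingredient the paper uses when it asserts that the restriction maps $\rho^{U_i}_{U_{ij}}$ are Lie algebra morphisms.
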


\begin{proof}
The uniqueness assertion is clear, so we will discuss only the existence of a Lie algebroid structure on the vector bundle $p\colon A\to N$ with the properties indicated in the statement. 
To this end, for every $i\in I$, we endow $A_{U_i}$ with the Lie algebroid structure for which $S_i$ is an isomorphism of Lie algebroids. 
Similarly, since the restriction $S_{ij}$ of $S_i$ to $TU_{ij}\oplus(U_{ij}\times\gg)$ 
is an isomorphism onto $A_{U_{ij}}$, we can endow $A_{U_{ij}}$ with the Lie algebroid structure for which $S_{ij}$ is an isomorphism of Lie algebroids. 
Then the restriction mapping $\rho^{U_i}_{U_{ij}}\colon\Gamma(A_{U_i})\to\Gamma(A_{U_{ij}})$ is a morphism of Lie algebras and is $\Ci(U_i)$-linear. 
We now note the exact sequence of vector spaces 
\begin{equation*}
	\xymatrix{
	0\to \Gamma(A) \ar[r]^{\rho_0\quad\ }& \prod\limits_{i\in I}\Gamma(A_{U_i}) \ar[r]^{\rho_1-\rho_2\quad\ } & \prod\limits_{(i,j)\in I\times I}\Gamma(A_{U_{ij}}) 
}
\end{equation*}
where 
the mappings $\rho_0,\rho_1,\rho_2$ are defined by 
\begin{align*}
\rho_0(\sigma)&:=(\sigma\vert_{U_i})_{i\in I} \\
\rho_1((\sigma_i)_{i\in I})&:=(\sigma_i\vert_{U_{ij}})_{(i,j)\in I\times I}, \\
\rho_2((\sigma_i)_{i\in I})&:=(\sigma_j\vert_{U_{ij}})_{(i,j)\in I\times I}
\end{align*}
we set $\Gamma(A_{U_{ij}}):=\{0\}$ and $\sigma_i\vert_{U_{ij}}=\sigma_j\vert_{U_{ij}}=0$ if $U_i\cap U_j=\emptyset$. 

Since $\rho_1$ and $\rho_2$ are Lie algebra morphisms by the above considerations, 
it follows that $\Ker(\rho_1-\rho_2)$ is a subalgebra of the Lie algebra 
$\prod\limits_{i\in I}\Gamma(A_{U_i})$, and a $\Ci(N)$-sub\-module as well. 
Then the Lie bracket of $\Ker(\rho_1-\rho_2)$ can be transported to $\Gamma(A)$ via the linear isomorphism $\rho_0\colon \Gamma(A)\to \Ker(\rho_1-\rho_2)$. 

In order to define the anchor $a\colon A\to TN$, 
we use the anchor maps $a_i\colon A_{U_i}\to TU_i$ 
of the Lie algebroids $A_{U_i}$ 
for all $i\in I$. 
Specifically, we define $a(v):=a_i(v) $ if $v\in A_{U_i}$. 
The mapping $a$ is well defined since the anchor maps of the trivial Lie algebroids $TU_i\oplus(U_i\times\gg)$ are compatible with the restriction from $U_i$ to $U_{ij}$. 
It is clear that the mapping $a\colon A\to TN$ defined in this way is a vector bundle morphism. 

Moreover, for every $f\in\Ci(N)$, $\sigma_1,\sigma_2\in\Gamma(A)$, and $x\in N$ 
we select $i\in I$ with $x\in U_i$ and we have by the above construction of the bracket $[\cdot,\cdot]$ on $\Gamma(A)$, 
\begin{align*}
[\sigma_1,f\sigma_2](x)
&=[\sigma_1\vert_{U_i},f\vert_{U_i}\sigma_2\vert_{U_i}]_{A_{U_i}}(x) \\
&=f(x)[\sigma_1\vert_{U_i},\sigma_2\vert_{U_i}]_{A_{U_i}}(x)
+(a_i\circ \sigma_1\vert_{U_i})(f\vert_{U_i})(x)\sigma_2(x) \\
&=f(x)[\sigma_1,\sigma_2](x)+(a\circ \sigma_1)(f)(x)\sigma_2(x) 
\end{align*}
This completes the construction of the Lie algebroid structure on the vector bundle $p\colon A\to N$. 
\end{proof}

\subsection{Preliminaries on \Q-groups}
\label{subsect2.2}

We first recall the following definition from \cite{Br73}. 
(See also \cite[Def. 2.1]{BPZ19}.) 

\begin{definition}\label{Q_def}
	\normalfont
	Let $\pi\colon M\to S$ be a mapping from a smooth manifold to a set. 
	The mapping $\pi$ is called a \emph{\Q-chart} if it satisfies the following conditions: 
	\begin{enumerate}[{\rm(a)}]
		\item\label{Q_ex_a} For all $x,y\in M$ with $\pi(x)=\pi(y)$, there exist neighbourhoods $U$ of $x$ and $U'$ of $x'$ in $M$ and a diffeomorphism $h\colon U\to U'$ with $h(x)=y$ and 
		$\pi(h(z))=\pi(z)$ for all $z\in U$.
		\item\label{Q_ex_b} For every smooth manifold $T$ and any smooth maps $f,h\colon T\to M$ with $\pi\circ f=\pi\circ h$, the set $\{t\in T\mid f(t)=h(t)\}$ is open in $T$. 
	\end{enumerate} 
If $\pi$ is surjective then it is called a \emph{\Q-atlas}. 
Two \Q-charts $\pi_j\colon M_j\to S$, $j=1,2$, are said to be \emph{equivalent} if their disjoint union $\pi_1\sqcup M_1\sqcup M_2\to S$ is again a \Q-chart. 
A \emph{\Q-manifold} is a set $S$ together with an equivalence class of \Q-atlasses. 

If $S_1$ and $S_2$ are \Q-manifolds, then a mapping $f\colon S_1\to S_2$ is called \emph{\Q-smooth} or \emph{morphism of \Q-manifolds}  if for every $s_j\in S_j$, $j=1,2$, with $f(s_1)=f(s_2)$ there exist \Q-charts $\pi_j\colon M_j\to S_j$, a smooth mapping  
$\widehat{f}\colon M_1\to M_2$, and points $m_j\in M_j$, $j=1,2$, with $\widehat{f}(m_1)=m_2$, $\pi_j(m_j)=s_j$ for $j=1,2$, and $f\circ \pi_1=\pi_2\circ\widehat{f}$.  
A \emph{\Q-diffeomorphism} is a bijective \Q-smooth mapping whose inverse is also \Q-smooth.

A \emph{\Q-group} is a \Q-manifold endowed with a group structure whose corresponding group operation and inversion are \Q-smooth maps. 
\end{definition}

\begin{remark}[\Q-immersions and \Q-submersions]
\label{rem:subm}
\normalfont 
In the framework of Definition~\ref{Q_def}, 
a mapping $f\colon S_1\to S_2$ is said to be a \emph{\Q-submersion} (respectively, \emph{\Q-immersion}) if there exist \Q-atlases $\pi_j\colon M_j\to S_j$ for $j=1,2$ and a submersion (respectively, immersion) 
$\widehat{f}\colon M_1\to M_2$ in the usual sense with $f\circ \pi_1=\pi_2\circ\widehat{f}$.  
A subset  
$S'\subseteq S_1$ is an  \emph{immersed \Q-submanifold}  
if it has the structure of a \Q-manifold with the initial manifold property: 
its inclusion  map $S'\hookrightarrow S_1$ is \Q-smooth 
and every \Q-smooth map $h\colon T\to S_1$ with $h(T)\subseteq S'$ is also \Q-smooth as a mapping into $S'$. 
These notions share some of the basic features of the classical notions. 
We mention the following ones for later use: 
\begin{enumerate}[{\rm(i)}]
	\item\label{rem:subm_1} 
	If $f\colon S_1\to S_2$ is a \Q-submersion then for every point $s\in S_2$ the subset 
	$f^{-1}(s)$ has the structure of an immersed \Q-submanifold. 
	See 
	\cite[Ch. 1, \S 2, no. 5, Prop., page 243]{Br73} for a more general result of this type. 
	\item\label{rem:subm_2} 
	If $f\colon S_1\to S_2$  and $g\colon S_2\to S_3$ are \Q-smooth maps, $f$ is surjective, 
	and $g\circ f$ is a \Q-submersion, then $g$ is a \Q-submersion, too.  
	See 
	\cite[Ch. 1, \S 2, no. 1, Prop., page 239]{Br73}, 
	where one should add the hypothesis that $f$ is surjective.  
	\item\label{rem:subm_3}  If $f\colon S_1\to S_2$ is a \Q-submersion, and $S_3$ is a \Q-manifold, 
	then a mapping $g\colon S_2\to S_3$ is \Q-smooth if and only if $g\circ f$ is \Q-smooth. 
	See 
	\cite[Ch. 1, \S 3, no. 1, Prop., page 245]{Br73}. 
	\item\label{rem:subm_4}  If $f\colon S_1\to S_2$ is  \Q-smooth and $g\colon S_2\to S_3$ are \Q-smooth and $g\circ f$ is a \Q-immersion, then $f$ is a \Q-immersion. 
See 
\cite[Ch. 1, \S 2, no. 1, Prop., page 239]{Br73}. 
\end{enumerate}
\end{remark}

\begin{remark}[description of \Q-groups]
\label{rem:Q-gr}
\normalfont 
If $\widetilde{G}$ is a Lie group, then a normal subgroup $\Lambda\subseteq\widetilde{G}$ is called \emph{pseudo-discrete} if every smooth map $\gamma\colon\RR\to\widetilde{G}$ with $\gamma(\RR)\subseteq\Lambda$ is constant. 
A  normal pseudo-discrete subgroup $\Lambda$ is necessarily contained in the center of $\widetilde{G}$ if this last group is connected, cf. \cite[Prop. 5.3]{BP22}. 
Every countable subgroup is pseudo-discrete, cf. \cite[Ch. I, \S 1, no. 3, Ex.]{Br73}.  
A normal subgroup $\Lambda\subseteq\widetilde{G}$ is pseudo-discrete if and only if the quotient map $\widetilde{G}\to\widetilde{G}/\Lambda$ is a \Q-atlas, 
by \cite[Prop. 3.5]{BPZ19}. 
If this is the case, then the above \Q-atlas makes the quotient group $\widetilde{G}/\Lambda$ into a \Q-group. 
Conversely, for every \Q-group $G$, there exist a connected, simply connected Lie group $\widetilde{G}$ with a pseudo-discrete central subgroup $\Lambda\subseteq \widetilde{G}$ 
and an isomorphism of \Q-groups $G=\widetilde{G}/\Lambda$. 
See \cite[Ch. 2, \S 5, no. 3, Cor., page 254]{Br73}. 
\end{remark}

\begin{remark}[tangent \Q-group]
	\label{TG_struct}
	\normalfont 
	There exists a natural tangent functor from the category of \Q-manifolds into itself that recovers the usual tangent functor on the subcategory of smooth manifolds, cf. \cite[Ch. 1, \S 1, no. 4]{Br73} and also \cite[\S 2.4]{BP22}. 
	In that framework, one can develop a few elements of Lie theory for \Q-groups. 
	Thus, we recall from \cite[Ch. 2, \S 2]{Br73} that if $G$~is a \Q-group with its Lie algebra~$\gg$ 
	then we have the adjoint action 
	$$\Ad_G\colon G\to\Aut(\gg),$$ 
	which is a \Q-smooth group morphism, and the corresponding semidirect product 
	$\gg\rtimes_{\Ad_G}G$ has the natural structure of a \Q-group. 
	(Here $\gg$ is regarded as the abelian Lie group $(\gg,+)$.) 
	
	Moreover, if $\bm\colon G\times G\to G$ is the group operation of $G$ 
	then 
	the tangent space $TG$ is a \Q-group 
	with its group operation $T\bm\colon TG\times TG\to TG$. 
	The mapping (to be called the \emph{right trivialization} of $TG$)
	$$\rho_G\colon TG\to \gg\rtimes_{\Ad_G}G,\quad v\mapsto (\omega_G(v),\tau_G(v))$$
	is an isomorphism of \Q-groups, where 
	$$\omega_G\colon TG\to \gg$$ 
	is 
	the right-invariant Maurer-Cartan form and 
	$$\tau_G\colon TG\to G$$ is the tangent bundle projection of~$G$. 
	For the inverse of the right trivialization we use the following notation which is intended to suggest right translations by elements of~$G$: 
	$$\rho_G^{-1}\colon \gg\rtimes_{\Ad_G}G\to TG,\quad \rho_G^{-1}(v,g)=:vg\in T_gG.$$
	It is convenient to define also 
	$$(\forall v\in\gg)(\forall g\in G)\quad gv:=\underbrace{(\Ad_G(g)v)}_{\in\gg}g\in T_gG$$
	In this notation we have for all $g_1,g_2\in G$ and $v_1,v_2\in\gg$
	\begin{equation}
		\label{TG_struct_eq1}
		T\bm(v_1g_1,v_2g_2)=(v_1+\Ad_G(g_1)v_2)(g_1g_2):=\rho_G^{-1}(v_1+\Ad_G(g_1)v_2,g_1g_2)
	\end{equation}
	and 
	\begin{equation}
		\label{TG_struct_eq2}
		(\forall v\in\gg)\quad \omega_G^{-1}(v)=vG.
	\end{equation}
\end{remark}

In the following remark we extend the Darboux derivative to functions that take values in a \Q-group. 
We refer to \cite[\S 5.1]{Ma05} for the classical case of functions that take values in a Lie group. 

\begin{remark}[Darboux derivative]
\label{deriv}
\normalfont 
Let $G$ be a \Q-group. 
For every smooth manifold $M$ and every \Q-smooth function $f\colon M\to G$, 
the $\gg$-valued 1-form 
\begin{equation*}
	\Delta_G(f)\colon TM\to \gg,\quad \Delta_G(f):=\omega_G\circ Tf=f^*(\omega_G)
\end{equation*}
is called the \emph{Darboux derivative} of $f$. 
Just as in \cite[\S 5.1]{Ma05},  if $\varphi\colon G\to H$ is a morphism of \Q-groups, then $\Delta_H(\varphi\circ f)=(T_\1\varphi)\circ\Delta_G(f)$. 
In particular, for $H:=\Aut(\gg)$ and $\varphi:=\Ad_G\colon G\to\Aut(\gg)$ we obtain 
\begin{equation*}
	\Delta_{\Aut(\gg)}(\Ad_G\circ f)=\ad_\gg\circ \Delta_G(f)\colon TM\to\Der(\gg).
\end{equation*} 
That is, for every vector field $X\in\Xc(M)$ and every smooth function $v\colon M\to\gg$ we have 
\begin{equation*}
\underbrace{(\Delta_{\Aut(\gg)}(\Ad_G\circ f))(X)}_{\quad\qquad\qquad\in\Ci(M,\Der(\gg))}v
=[\underbrace{(\Delta_G(f))(X)}_{\in\Ci(M,\gg)},v].
\end{equation*}
Now let $G$ and $H$ be \Q-groups, $f\colon M\to H$ be a \Q-smooth map, 
and $\varphi\colon\gg\to\hg$ be a Lie algebra morphism. 
If we define 
$$\widetilde{\varphi}\colon M\to \Hom(\gg,\hg),\quad 
\widetilde{\varphi}(x):=\Ad_H(f(x))\circ\varphi$$
then 
for every $X\in\Xc(M)$ and every smooth function $v\colon M\to\gg$ we have 
\begin{equation}
\label{deriv_eq1}
X(\widetilde{\varphi}\cdot v)=\widetilde{\varphi}\cdot X(v)-[(\Delta_H(f))(X),\widetilde{\varphi}\cdot v].
\end{equation}
This formula can be obtained by the method of proof of \cite[Prop. 5.1.1]{Ma05}.
\end{remark}

\section{Principal \Q-bundles}
\label{Sect3}

In this section we adapt the classical notion of principal bundle (cf. \cite[Ch. I, \S 5]{KN63}) to the setting of \Q-manifolds, to the extent that is needed for the integration of transitive Lie algebroids.

\begin{definition}
\label{Q-bundle_def}
\normalfont 
Let $G$ be a \Q-group and $N$ be a smooth manifold. 
A \emph{principal \Q-bundle} over $N$ with structure group $G$ 
is a \Q-manifold $P$ endowed with a free right \Q-smooth action 
\begin{equation}
\label{Q-bundle_def_eq1}
P\times G\to P, \quad (p,g)\mapsto pg,
\end{equation}
and a \Q-smooth mapping 
$$\beta\colon P\to N$$ 
satisfying the following conditions: 
\begin{itemize}
	\item For every $p\in P$ we have $\beta^{-1}(\beta(p))=pG$. 
	\item For every point $n\in N$ there exist an open neighborhood $U\subseteq N$ 
	and a $G$-equivariant \Q-diffeomorphism $\psi$ for which the diagram 
	\begin{equation}
		\label{Q-bundle_def_eq2}
	\xymatrix{
    \beta^{-1}(U) \ar[dr]_{\beta\vert_{\beta^{-1}(U)}} \ar[rr]^{\psi}& & U\times G \ar[dl]^{\pr_1}\\
     & U & 
	}
\end{equation}
	is commutative, where $\pr_1$ denotes the Cartesian projection onto the first factor.
\end{itemize}
\end{definition}

\begin{remark}
\label{local}
\normalfont
The commutative diagram~\eqref{Q-bundle_def_eq2} means that there exists a \Q-smooth mapping $\varphi\colon U\to G$ such that 
$$(\forall p\in\beta^{-1}(U))\quad \psi(p)=(\beta(p),\varphi(p))$$
and the $G$-equivariance property of $\psi$ means 
\begin{equation}
\label{local_eq1}
(\forall g\in G)(\forall  p\in\beta^{-1}(U))\quad \varphi(pg)=\varphi(p)g.
\end{equation}
As in the classical case, it follows that there exist an open covering $N=\bigcup\limits_{i\in I}U_i$ and for every $i\in I$ a $G$-equivariant \Q-diffeomorphism $\psi_i\colon \beta^{-1}(U_i)\to U_i\times G$, $p\mapsto (\beta(p),\varphi_i(p))$ as above. 
For every $i,j\in I$ with $U_i\cap U_j\ne\emptyset$ it follows by \eqref{local_eq1} that the following mapping 
\begin{equation}
	\label{local_eq1.5}
\psi_{ji}\colon U_i\cap U_j\to G,\quad \psi_{ji}(\beta(p)):=\varphi_j(p)\varphi_i(p)^{-1}
\end{equation}
is correctly defined, since $\beta^{-1}(\beta(p))=pG$. 
Moreover, we have 
\begin{equation}
\label{local_eq2}
\psi_j\circ\psi_i^{-1}\colon (U_i\cap U_j)\times G\to (U_i\cap U_j)\times G,\quad 
(n,g)\mapsto(n,\psi_{ji}(n)g).
\end{equation}
Using the fact that the multiplication in $G$ is \Q-smooth, it then directly follows that 
the mapping $\psi_{ji}\colon U_i\cap U_j\to G$ is \Q-smooth. 
The mappings $\psi_{ji}$ are called the \emph{transition functions} of the principal \Q-bundle with respect to the open covering $N=\bigcup\limits_{i\in I}U_i$ 
and it is clear that they satisfy the \emph{cocycle condition} 
\begin{equation}
\label{local_eq3}
	\psi_{ki}(n)=\psi_{kj}(n)\psi_{ji}(n)\text{ for }n\in U_i\cap U_j\cap U_k
\end{equation}
if $i,j,k\in I$ with $U_i\cap U_j\cap U_k\ne\emptyset$. 
\end{remark}

Motivated by the above remark, we make the following definition: 

\begin{definition}
\normalfont
\label{transition_def}
Let $G$ be a \Q-group and $N$ be a smooth manifold 
with an open covering $N=\bigcup\limits_{i\in I}U_i$. 
A family of \emph{$G$-valued transition functions with respect to the covering $\{U_i\}_{i\in I}$} 
 is a family of \Q-smooth mappings $\psi_{ji}\colon U_i\cap U_j\to G$ 
 defined for all $i,j\in I$ with $U_i\cap U_j\ne\emptyset$, 
satisfying the cocycle condition~\eqref{local_eq3}. 
\end{definition}

\begin{remark}
\normalfont 
Assume the setting of Definition~\ref{transition_def}. 
It is easily seen that if $H$ is another \Q-group and $\Theta\colon G\to H$ is a \Q-smooth group homomorphism, then the family of mappings $\Theta\circ\psi_{ji}\colon U_i\cap U_j\to H$ is a family of $H$-valued transition functions with respect to the covering $\{U_i\}_{i\in I}$. 
\end{remark}

\begin{definition}\label{AP_def_morph}
\normalfont
 For $i=1,2$, let $G_i$ be a \Q-group and $\beta_i:P_i\to N_i$ be a principal \Q-bundle with structure group $G_i$.   
 Assume that we have a homomorphism of \Q-groups $\Theta:G_1\to G_2$. 
 A  \emph{morphism of principal \Q-bundles between $P_1$ and $P_2$} is a \Q-smooth map $F\colon P_1\to P_2$ over a smooth map $f\colon N_1\to N_2$ such that 
 $\beta_2\circ F=f\circ \beta_1$ and which is equivariant, that is, $F(pg)=F(p)\Theta(g)$ for all $p\in P_1$ and $g\in G_1$.  
 The morphism $F$ is an \emph{isomorphism  of principal \Q-bundles} if $\Theta: G_1\to G_2$ is an isomorphism of \Q-groups and $\Phi$ and $\phi$ are $\Q$-diffeomorphisms.
 \end{definition}

Proposition~\ref{bundle} below is a version of \cite[Ch. I, Prop. 5.2]{KN63} for principal \Q-bundles. 
{In order to motivate the definition of the equivalence relation~\eqref{ildePsim}, 
we make the following remark on the special case of smooth principal bundles. 
\begin{remark}
\label{bundle_smooth}
\normalfont 
Let $\beta\colon P\to N$ be a smooth principal bundle whose structure group is a Lie group~$G$. 
Assume that $N=\bigcup\limits_{i\in I}U_i$ is an open covering 
and for every $i\in I$ we have a smooth cross-section $\sigma_i\colon U_i\to P$, 
hence $\beta\circ\sigma_i=\id_{U_i}$. 
Then the mapping 
\begin{equation}
\label{bundle_smooth_eq1}
\tau_i\colon U_i\times G\to\beta^{-1}(U_i),\quad  (x,g)\mapsto\sigma_i(x)g, 
\end{equation}
is a diffeomorphism, and we denote its inverse by $\psi_i\colon \beta^{-1}(U_i)\to U_i\times G$, $\psi_i(p)=:(\beta(p),\varphi_i(p))$,  
hence 
\begin{equation*} 
	(\forall p\in \beta^{-1}(U_i))\quad p=\sigma_i(\beta(p))\varphi_i(p).
\end{equation*} 
Therefore, if $p\in \beta^{-1}(U_i)\cap \beta^{-1}(U_j)$, 
then $\sigma_i(\beta(p))\varphi_i(p)=p=\sigma_j(\beta(p))\varphi_j(p)$. 
Thus, using \eqref{local_eq1.5}, we obtain 
\begin{equation} 
\label{bundle_smooth_eq2}
	(\forall n\in U_i\cap U_j)\quad \sigma_i(n)=\sigma_j(n)\psi_{ji}(n).
\end{equation} 
Consequently, if $i_1,i_2\in I$, 
then we can describe the points where the maps $\tau_{i_1}$ and $\tau_{i_2}$ 
defined in \eqref{bundle_smooth_eq1}
	take the same value. 
Specifically,  
if $(n_1,g_1)\in U_{i_1}\times G$, and $(n_2,g_2)\in U_{i_2}\times G$, then we have 
\begin{align*}
\tau_{i_1}(n_1,g_1)=\tau_{i_1}(n_1,g_1)
& \iff n_1=n_2=:n\in U_{i_1}\cap U_{i_2}\text{ and }
\sigma_{i_1}(n)g_1=\sigma_{i_2}(n)g_2 \\
& \iff n_1=n_2=n \text{ and }g_2g_1^{-1}=\psi_{i_2i_1}(n).
\end{align*}
This last condition should be compared with \eqref{ildePsim} in the proof of Proposition~\ref{bundle}. 
\end{remark}
}

\begin{proposition}
\label{bundle}
Let $G$ be a \Q-group and $N$ be a smooth manifold. 
Assume that $N=\bigcup\limits_{i\in I}U_i$ is an open covering and 
we have a family of $G$-valued transition functions $\psi_{ji}\colon U_i\cap U_j\to G$ with respect to the covering $\{U_i\}_{i\in I}$. 
Then there exists a principal \Q-bundle $\beta\colon P\to N$ whose transition functions with respect to the above open covering of $N$ are the functions $\psi_{ji}$. 
\end{proposition}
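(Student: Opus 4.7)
The plan is to mimic the classical principal-bundle-from-cocycles construction (cf.\ \cite[Ch.~I, Prop.~5.2]{KN63}), promoting it to the \Q-category with care at the quotient step. First, form the disjoint union $\widetilde P:=\bigsqcup_{i\in I}U_i\times G$ and introduce the equivalence relation
\begin{equation}
\label{ildePsim}
(n_1,g_1)\sim(n_2,g_2)\iff n_1=n_2\in U_{i_1}\cap U_{i_2}\text{ and }g_2=\psi_{i_2 i_1}(n_1)g_1
\end{equation}
where $(n_k,g_k)\in U_{i_k}\times G$. The cocycle condition~\eqref{local_eq3} with the choices $i=j=k$ and $i=k$ gives $\psi_{ii}\equiv\1$ and $\psi_{ij}=\psi_{ji}^{-1}$, so $\sim$ is indeed an equivalence relation, and transitivity is exactly~\eqref{local_eq3}. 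Set $P:=\widetilde P/\!\sim$; the formulas $\beta([n,g]):=n$ and $[n,g]\cdot h:=[n,gh]$ are well defined and the right $G$-action is free. At this stage everything is purely set-theoretic.

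Next I would equip $P$ with a \Q-structure. Choose a \Q-atlas $\pi_G\colon M_G\to G$ and let $\widetilde M:=\bigsqcup_{i\in I}U_i\times M_G$, a genuine smooth manifold. Define the candidate \Q-atlas $\pi\colon\widetilde M\to P$ as the composition of $\id_{U_i}\times\pi_G$ with the quotient map $\widetilde P\to P$. I would then verify both conditions of Definition~\ref{Q_def}. For condition~(a), given $(n,m_1)\in U_{i_1}\times M_G$ and $(n,m_2)\in U_{i_2}\times M_G$ with $\pi(n,m_1)=\pi(n,m_2)$, hence $\pi_G(m_2)=\psi_{i_2i_1}(n)\pi_G(m_1)$, I construct a local diffeomorphism from a neighbourhood of $(n,m_1)$ to a neighbourhood of $(n,m_2)$ by lifting the \Q-smooth map $(n',g)\mapsto(n',\psi_{i_2i_1}(n')g)$ through $\pi_G$, using the definition of \Q-smoothness; its inverse is produced analogously using $\psi_{i_1i_2}=\psi_{i_2i_1}^{-1}$, and condition~(b) of Definition~\ref{Q_def} applied to $\pi_G$ shows that the two lifts are mutually inverse near the given points. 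For condition~(b) for $\pi$, given smooth $f,h\colon T\to\widetilde M$ with $\pi\circ f=\pi\circ h$ and $t_0\in T$ satisfying $f(t_0)=h(t_0)$, smoothness forces $f$ and $h$ to land in the same component $U_{i_0}\times M_G$ near $t_0$; composing with $\beta$ gives equality of first coordinates, while the second coordinates $f_G,h_G\colon T\to M_G$ satisfy $\pi_G\circ f_G=\pi_G\circ h_G$ near $t_0$, so condition~(b) for $\pi_G$ supplies the required open neighbourhood.

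Once $P$ is a \Q-manifold, local triviality and identification of transition functions are essentially bookkeeping: the \Q-smooth sections $\sigma_i\colon U_i\to P$, $\sigma_i(n):=[n,\1]$, give $G$-equivariant \Q-diffeomorphisms $\tau_i\colon U_i\times G\to\beta^{-1}(U_i)$ by $(n,g)\mapsto\sigma_i(n)\cdot g=[n,g]$ whose inverses $\psi_i$ realize the trivializations of Definition~\ref{Q-bundle_def}; for $n\in U_i\cap U_j$, formula~\eqref{ildePsim} yields $\sigma_i(n)=\sigma_j(n)\cdot\psi_{ji}(n)$, exactly as in Remark~\ref{bundle_smooth}, so \eqref{local_eq1.5} recovers the prescribed $\psi_{ji}$. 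The \Q-smoothness of $\beta$ and of the right $G$-action follow from that of $\pi$ together with the \Q-smoothness of multiplication in $G$. The main obstacle is the middle paragraph: because \Q-charts need not be injective, producing the local diffeomorphism required by Definition~\ref{Q_def}(a) is not automatic — it relies on lifting the fibrewise left translations by the \Q-smooth cocycle $\psi_{ji}$ to the smooth model $M_G$ and then invoking Definition~\ref{Q_def}(b) for $\pi_G$ to see that these lifts are mutually inverse. Everything downstream is a formal consequence.
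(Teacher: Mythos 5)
Your construction is correct, but it takes a genuinely different route from the paper's. You build $P$ set-theoretically as a quotient of $\bigsqcup_i U_i\times G$ and then manufacture a \Q-atlas from $\bigsqcup_i U_i\times M_G$ for an \emph{arbitrary} \Q-atlas $\pi_G\colon M_G\to G$, verifying both conditions of Definition~\ref{Q_def} by reducing them to the corresponding conditions for $\pi_G$: condition (a) via local smooth lifts of the fibrewise left translations $(n,g)\mapsto\psi_{i_2i_1}(n)g$, whose mutual inverseness you check with condition (b) for $\pi_G$, and condition (b) by projecting to the second factor and using $\psi_{ii}\equiv\1$. The paper instead invokes the structure theorem for \Q-groups --- $G=\widetilde{G}/\Lambda$ with $\widetilde{G}$ a Lie group and $\Lambda$ a pseudo-discrete central subgroup --- and takes the smooth model to be $\widetilde{P}=\bigsqcup_i\{i\}\times U_i\times\widetilde{G}$; the transversal diffeomorphisms are then written explicitly as $(n,\widetilde g)\mapsto(n,\lambda\widetilde\psi_{i_2i_1}(n)\widetilde g)$ with a correcting central element $\lambda\in\Lambda$, and the rigidity condition (b) is obtained by observing that the discrepancy $\lambda(t)$ between $\widetilde g_2(t)$ and the lifted cocycle applied to $\widetilde g_1(t)$ is a smooth map into the pseudo-discrete $\Lambda$, hence locally constant. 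Your argument is more intrinsic in that it never needs the presentation of $G$ as a quotient of a Lie group, but it compresses into one sentence the step of transferring the smooth lift supplied by \Q-smoothness of $(n,g)\mapsto\psi_{i_2i_1}(n)g$ to the specific charts $U\times M_G$ and $M_G$, which requires the compatibility of \Q-charts (e.g.\ \cite[Prop.~2.2]{BPZ19}); the paper's choice of $\widetilde{G}$ pays off later, since $\widetilde{P}$ is itself a trivial smooth principal $\widetilde{G}$-bundle covering $P$, a fact exploited in Remark~\ref{principalbundleQchart} and in the gauge-groupoid arguments of Section~\ref{Sect8}.
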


\begin{proof}
Since $G$ is a \Q-group, there exist a Lie group $\widetilde{G}$ with a pseudo-discrete central subgroup $\Lambda\subseteq \widetilde{G}$ with $G=\widetilde{G}/\Lambda$. 
The corresponding quotient map 
$$\pi_G\colon \widetilde{G}\to G, \quad \widetilde{g}\mapsto \widetilde{g}\Lambda,$$ 
is also a \Q-atlas. 
(See Remark~\ref{rem:Q-gr}.) 

We then consider the disjoint union of smooth manifolds 
$$\widetilde{P}:=\bigsqcup_{i\in I}(\{i\}\times U_i\times\widetilde{G})
$$
with the equivalence relation defined by 
\begin{equation}\label{ildePsim}
(i_1,n_1,\widetilde{g}_1)\sim(i_2,n_2,\widetilde{g}_2)
\iff 
\begin{cases}
n_1=n_2=:n\in U_{i_1}\cap U_{i_2} & \\
\pi_G(\widetilde{g}_2)=\psi_{i_2i_1}(n)\pi_G(\widetilde{g}_1).
\end{cases}
\end{equation}
We will prove that the corresponding quotient map 
$$\pi_P\colon \widetilde{P}\to P$$ is a \Q-atlas, 
the mapping 
$$\beta\colon P\to N,\quad \beta(\pi_P(i,n,\widetilde{g})):=n$$
is a principal \Q-bundle with respect to the right action  
\begin{equation}\label{actionONP}
A\colon P\times G\to G,\quad (\pi_P(i,n,\widetilde{g}_1),\pi_G(\widetilde{g}_2))\mapsto \pi_P(i,n,\widetilde{g}_1)\pi_G(\widetilde{g}_2)=:\pi_P(i,n,\widetilde{g}_1\widetilde{g}_2)
\end{equation}
and its transition functions with respect to the open covering $N=\bigcup\limits_{i\in I}U_i$ are $\psi_{ji}$. 
In fact, we only need to prove that $\pi$ is a \Q-atlas, and then the other assertions 
are straightforward. 
(See the proof of  \cite[Ch. I, Prop. 5.2]{KN63} for more details in the classical case.)

For proving that $\pi$ is a \Q-atlas, we first check the existence of transversal local diffeomorphisms. 
To this end assume $i_1,i_2\in I$, $n_1=n_2=:n_0\in U_{i_1}\cap U_{i_2}$, and
\begin{equation}
\label{bundle_proof_eq1} 
\pi_G(\widetilde{g}_2)=\psi_{i_2i_1}(n)\pi_G(\widetilde{g}_1). 
\end{equation}
Since  $\psi_{i_2i_1}\colon U_{i_1}\cap U_{i_2}\to G$  is \Q-smooth, 
there exist  
an open subset $U\subseteq U_{i_1}\cap U_{i_2}$ with $n\in U$ and 
a smooth mapping $\widetilde{\psi}_{i_2i_1}\colon U_{i_1}\cap U_{i_2}\to\widetilde{G}$ with $\pi_G\circ \widetilde{\psi}_{i_2i_1}=\psi_{i_2i_1}\vert_U$. 
Then the equality~\eqref{bundle_proof_eq1} 
implies $\pi_G(\widetilde{g}_2)=\pi_G(\widetilde{\psi}_{i_2i_1}(n_0)\widetilde{g}_1)\in G=\widetilde{G}/\Lambda$, 
hence there exists $\lambda\in\Lambda$ with $\widetilde{g}_2=\lambda\widetilde{\psi}_{i_2i_1}(n_0)\widetilde{g}_1$. 
(Recall from Remark~\ref{rem:Q-gr} that the pseudo-discrete subgroup $\Lambda$ is contained in the center of~$\widetilde{G}$.)
If we define 
$$h\colon U\times\widetilde{G}\to U\times\widetilde{G},\quad (n,\widetilde{g})\mapsto (n,\lambda\widetilde{\psi}_{i_2i_1}(n)\widetilde{g})$$
then it is clear that $h$ is a diffeomorphism satisfying $h(n,\widetilde{g}_1)=(n,\widetilde{g}_2)$ and moreover $h(n,\widetilde{g})\sim (n,\widetilde{g})$ for every $(n,\widetilde{g})\in U\times\widetilde{G}$, 
hence $h$ is a transversal local diffeomorphism. 

To complete the proof of the fact that $\pi$ is a \Q-atlas, let $T$ be any smooth manifold and assume that $\gamma_1,\gamma_2\colon T\to X$ are smooth mappings satisfying $\pi\circ\gamma_1=\pi\circ\gamma_2$, that is, 
\begin{equation}
\label{bundle_proof_eq2}
(\forall t\in T)\quad \gamma_1(t)\sim\gamma_2(t). 
\end{equation}
We must show that if $t_0\in T$ has the property $\gamma_1(t_0)=\gamma_2(t_0)$, 
then $t_0$ has an open neighborhood $T_0\subseteq T$ with $\gamma_1(t)=\gamma_2(t)$ for every $t\in T_0$. 
To this end we write 
$$\gamma_j(t)=:(i_j,n_j(t),\widetilde{g}_j(t))\text{ for }t\in T\text{ and }j=1,2.$$  
The hypothesis \eqref{bundle_proof_eq2} actually implies $n_1(t)=n_2(t)=:n(t)$ for every $t\in T$. 
On  the other hand, the hypothesis $\gamma_1(t_0)=\gamma_2(t_0)$ implies 
$i_1=i_2=:i$ and $\widetilde{g}_1(t_0)=\widetilde{g}_2(t_0)$. 

In particular $n(t_0)\in U_i$ and we select 
a connected open neighborhood $T_0$ of $t_0\in T$ with 
$n(t)\in U$ for every $t\in T_0$. 
Since $\gamma_1(t)\sim\gamma_2(t)$ and $i_1=i_2=i$, 
we then obtain $q(\widetilde{g}_2(t))=
q(\widetilde{g}_1(t))$ 
hence there exists $\lambda(t)\in\Lambda=\Ker \pi_G$ with 
\begin{equation}
\label{bundle_proof_eq3}
\widetilde{g}_2(t)=\lambda(t)
\widetilde{g}_1(t)
\text{ for every }t\in T_0.
\end{equation}
Since the mappings $\widetilde{g}_2(\cdot),
\widetilde{g}_1(\cdot)$ are smooth, it then follows that the mapping $\lambda(\cdot)$ is smooth. 
On the other hand, this mapping takes values in the pseudo-discrete subgroup $\Lambda$ and its domain $T_0$ is connected, hence $\lambda(\cdot)$ is a constant mapping. 
We have noted above that 
$\widetilde{g}_1(t_0)=\widetilde{g}_2(t_0)$, 
hence $\lambda(t_0)=\1\in\widetilde{G}$, and then $\lambda(t)=\1\in\widetilde{G}$ for every $t\in T_0$. 
Then \eqref{bundle_proof_eq3} implies 
$\widetilde{g}_2(t)=\widetilde{g}_1(t)$ for every $t\in T_0$. 
Thus $\gamma_1(t)=\gamma_2(t)$ for every $t\in T_0$, and this completes the proof. 
\end{proof}

\begin{remark}
	\label{principalbundleQchart}
	\normalfont
 Let $G$ be a \Q-group and  $\beta:P\to N$ be a principal \Q-bundle.  
 Using the notation in the proof of Proposition~\ref{bundle},  
 let  $N=\bigcup\limits_{i\in I}U_i$ be an open covering  such that $P\vert_{U_i}$ is trivializable for every $i\in I$ 
with  $G$-valued transition functions $\psi_{ji}\colon U_i\cap U_j\to G$. 
Let $\widetilde{G}$ be a Lie group with a pseudo-discrete normal subgroup $\Lambda\subseteq G$ 
such that the quotient morphism $\pi_G:\widetilde{G}\to G=\widetilde{G}/\Lambda$  is a \Q-atlas. 
(See Remark~\ref{rem:Q-gr}.)
We consider the associated manifold 
$$\widetilde{P}:=\bigsqcup\limits_{i\in I}(\{i\}\times U_i\times\widetilde{G})
=\Bigl(\bigsqcup_{i\in I}(\{i\}\times U_i)\Bigr)\times\widetilde{G}$$ 
provided with the equivalence relation \eqref{ildePsim}. 
Then $P$ can be identified with $\widetilde{P}/\sim$ and we have the projection $\pi_P:
\widetilde{P}\to P$ as defined in the 
 proof of Proposition~\ref{bundle}. 
In particular, $\pi_P\colon \widetilde{P}\to P$ is a \Q-atlas.  
We have  a smooth right action 
$$\widetilde{A}\colon 
\widetilde{P}\times\widetilde{G}\to \widetilde{P}, 
\quad ((i,n,\widetilde{g}_1),\widetilde{g}_2)\mapsto (i,n, \widetilde{g}_1\widetilde{g}_2).$$ 
 This action is free,  and  if we define the smooth manifold 
 $$\widetilde{N}:= \bigsqcup\limits_{i\in I}(\{i\}\times U_i)$$ 
 then we have the surjective submersion 
 $$\widetilde{\beta}\colon \widetilde{P}\to \widetilde{N},\quad 
 (i,n,\widetilde{g})\mapsto (i,n)$$
 whose fibres are exactly the $\widetilde{G}$-orbits in~$\widetilde{P}$. 
 Therefore $\widetilde{\beta}\colon \widetilde{P}\to \widetilde{N}$ is a (trivial) smooth principal bundle with structure group~$ \widetilde{G}$.

  On the other hand,   the projection $\pi_P\colon\widetilde{P}\to P$ 
  intertwines the  action $\widetilde{A}$ and the action $A$ of $G$ on $P$, that is, 
  cf. \eqref{actionONP}, 
$$\pi_P\circ \widetilde{A}(((i,n,\widetilde{g}_1),\widetilde{g}_2)=A(\pi_P((i,n,\widetilde{g}_1)),\pi_G(\widetilde{g}_2)).$$
 If   we consider the equivalence relation on $\widetilde{N}$
\begin{equation}\label{TildePsim}
(i_1,n_1
)\sim_N 
(i_2,n_2
)
\iff 
n_1=n_2=:n\in U_{i_1}\cap U_{i_2} 
\end{equation}
 then the corresponding quotient map $\pi_N:\widetilde{N}\to N$ is a \Q-atlas for $N$, 
 that is, an \'etale smooth map since both $\widetilde{N}$ and $N$ are smooth manifolds.

 Thus we have the following commutative diagram:
$$\xymatrix{
\widetilde{P} \ar[d]_{\widetilde{\beta}} \ar[r]^{\pi_P} &P\ar[d]^\beta\\
\widetilde{N} \ar[r]^{\pi_N}         &N}
$$
Moreover,  since $\pi_G$ is a homomorphism of \Q-groups, it follows that $\pi_P$ is a homomorphism of principal \Q-bundles over $\pi_N$. 
	\end{remark}

\begin{example}[associated bundles]
\label{bundle_assoc}
\normalfont
Assume the setting  of Remark~\ref{local}. 
Let $\Vc$ be a finite-dimensional real vector space and $\Theta\colon G\to\GL(\Vc)$ be a \Q-smooth group homomorphism. 
Then $\Theta\circ \psi_{ji}\colon U_i\cap U_j\to G$ is a family of $\GL(\Vc)$-valued transition functions with respect to the covering $\{U_i\}_{i\in I}$, 
hence they give rise via Proposition~\ref{bundle} to a principal bundle $\beta_\Theta\colon P_\Theta\to N$ with structure group~$\GL(\Vc)$. 
Since $\GL(\Vc)$ is a Lie group, it is easily seen that $\beta_\Theta\colon P_\Theta\to N$ is a smooth principal bundle. 
Using the tautological action of $\GL(\Vc)$ on $\Vc$, 
we can define the associated bundle $ P_\Theta\times_{\GL(\Vc)}\Vc\to N$ 
as in \cite[pages 54--55]{KN63}. 
This is a smooth vector bundle over $N$ and we will use the notation $P\times_\Theta\Vc:=P_\Theta\times_{\GL(\Vc)}\Vc$. 
The projection of this vector bundle will be denoted 
$$\beta_{\Theta,\Vc}\colon P\times_\Theta\Vc\to N.$$ 
It follows by the construction of an associated bundle that for every $i\in I$ we have 
a trivialization 
$$\tau_i\colon U_i\times \Vc\to \beta_{\Theta,\Vc}^{-1}(U_i)$$ 
and, if $j\in I$ with $U_i\cap U_j\ne\emptyset$, then we have the change of coordinates 
\begin{align*}
\tau_j\circ\tau_i^{-1}\vert_{(U_i\cap U_j)\times\Vc}\colon 
 (U_i\cap U_j)\times & \Vc\to (U_i\cap U_j)\times\Vc,\\
(\tau_j\circ\tau_i^{-1}) (n, & v)=(n,\Theta(\psi_{ji}(n))v).
\end{align*}
\end{example}

\section{The Atiyah functor for principal \Q-bundles}
\label{Sect4}

In this section we construct the Atiyah sequence of a principal \Q-bundle 
and we explore its functorial properties. 
These will play an important role in Section~\ref{Sect8} in 
the description of the Lie algebroid associated to the gauge groupoid 
of a principal \Q-bundle. 

\begin{definition}
	\label{AP_def}
	\normalfont 
Assume the setting of Definition~\ref{Q-bundle_def}. 
By differentiation with respect to $p\in P$, the \Q-smooth group action \eqref{Q-bundle_def_eq1} leads to a (right) \Q-smooth group action
\begin{equation}
\label{AP_def_eq1}
TP\times G\to TP. 
\end{equation}
We consider the corresponding quotient map 
$$q_{TP}\colon TP\to (TP)/G=:\Atiyah(P)$$
and the mapping 
$$\Ag(\beta)\colon \Atiyah(P)\to N, \quad vG\mapsto (\beta\circ\tau_P)(v)$$
for which the diagram 
\begin{equation*}
	\xymatrix{
		& TP \ar[d]_{q_{TP}} \ar[rr]^{\tau_P}& & P \ar[d]^{\beta} &\\
		\Atiyah(P) =\hskip-25pt & (TP)/G \ar@{.>}[rr]^{\ \ \Ag(\beta)} & & N  &\hskip-25pt \simeq P/G
	}
\end{equation*}
is commutative, where $\tau_P\colon TP\to P$ is the tangent bundle of the \Q-manifold $P$.

We also define 
\begin{equation}
	\label{AP_def_eq2}
a_P\colon \Atiyah(P)\to TN, \quad a_P(vG):=(T\beta)(v),
\end{equation}
which is well defined because of the property $\beta(pg)=\beta(p)$ for all $p\in P$ and $g\in G$, which implies $(T\beta)(vg)=(T\beta)(v)$ for all $v\in TP$ and $g\in G$, 
where we use the group action~\eqref{AP_def_eq1}. 
Finally, we define 
$$\iota_P\colon P\times_{\Ad_G}\gg\to\Atiyah(P)$$
where $\iota_P((p,w)G)$ is the image of the tangent vector $(0,w)\in T_{p,\1}(P\times G)$ through the tangent map of~\eqref{Q-bundle_def_eq1}. 
Again, it is easily seen that the mapping~$\iota_P$ is well defined. 
\end{definition}

\begin{definition}
\label{AP_def_morph_algbd}
\normalfont
As in Definition~\ref{AP_def_morph}, 
let $\beta_j\colon P_j\to N_j$ be a principal \Q-bundle with structure group~$G_j$ for $j=1,2$, 
and assume that we have a morphism of principal \Q-bundles 
defined by a \Q-smooth map $F\colon P_1\to P_2$ over a smooth map $f\colon N_1\to N_2$ 
with respect to a morphism of \Q-groups 
$\Theta\colon G_1\to G_2$. 
Then it is easily seen that the tangent map $TF\colon TP_1\to TP_2$ is equivariant 
with respect to the actions of $G_1$ on $TP_1$ and $G_2$ on $TP_2$ as in \eqref{AP_def_eq1}, 
hence there exists a mapping $\Atiyah(F)\colon \Atiyah(P_1)\to \Atiyah(P_2)$ for which the diagram 
\begin{equation*}
	\xymatrix{
		& TP_1 \ar[d]_{q_{TP_1}} \ar[rr]^{TF}& & TP_2 \ar[d]^{q_{TP_2}} &\\
		\Atiyah(P_1) =\hskip-25pt & (TP_1)/G_1 \ar@{.>}[rr]^{\ \Atiyah(F)} & & (TP_2)/G_2  &\hskip-25pt = \Atiyah(P_2)
	}
\end{equation*}
is commutative. 
\end{definition}

\begin{example}
\label{triv}
\normalfont
Consider the trivial principal \Q-bundle $\beta=\pr_1\colon N\times G\to N$, 
where $N$ is a smooth manifold and $G$ is a \Q-group with its Lie algebra~$\gg$ 
and its group operation $\bm\colon G\times G\to G$. 
We have the $G$-equivariant \Q-diffeomorphism 
$$\id_{TN}\times\rho_G\colon T(N\times G)=TN\times TG\to TN\times (\gg\rtimes_{\Ad_G} G)$$
by Remark~\ref{TG_struct} hence we obtain the bijective map 
\begin{align*}
(\id_{TN}\times \rho_G)/G\colon & \Atiyah(N\times G)=T(N\times G)/G\to TN\oplus(N\times\gg), \\ 
& 
(X,v)G\mapsto X\oplus \omega_G(v)
\end{align*}
with its inverse map 
\begin{equation}
	\label{triv_eq1}
\chi\colon TN\oplus(N\times\gg)\to T(N\times G)/G=\Atiyah(N\times G)
\end{equation} 
given by 
\begin{equation}
	\label{triv_eq2}
	\chi(X\oplus v):=(X,v)G=\{X\}\times 	vG
\end{equation}
Now assume that we have, for $j=1,2$, a trivial bundle $\beta_j=\pr_1\colon N_j\times G_j\to N_j$, 
where $N_j$ is a smooth manifold and $G_j$ is a \Q-group with its Lie algebra~$\gg_j$ 
and its group operation $\bm_j\colon G_j\times G_j\to G_j$. 
If $\Theta\colon G_1\to G_2$ is a morphism of \Q-groups, $\psi\colon N_1\to G_2$ is a \Q-smooth map, and $f\colon N_1\to N_2$ is a smooth map, then 
it is easily checked that the mapping 
$$F\colon N_1\times G_1\to N_2\times G_2,\quad F(n_1,g_1):=(f(n_1),\psi(n_1)\Theta(g_1))$$
is a morphism of principal \Q-bundles over the map $f\colon N_1\to N_2$ 
with respect to the morphism of \Q-groups 
$\Theta\colon G_1\to G_2$, and conversely, every morphism $F$ of principal \Q-bundles over the map $f$ with respect to the morphism of \Q-groups~$\Theta$ looks as above for a suitable, uniquely determined, \Q-smooth map  $\psi\colon N_1\to G_2$. 

In order to compute $\Atiyah(F)$ we first note that 
the mapping 
\begin{align*}
\widetilde{TF}
 :=(\id_{TN_2}\times\rho_{G_2}) & \circ TF\circ(\id_{TN_1}\times\rho_{G_1})^{-1} \\
& \colon TN_1\times (\gg_1\rtimes_{\Ad_{G_1}} G_1)\to TN_2\times (\gg_2\rtimes_{\Ad_{G_2}} G_2)
\end{align*}
is given by 
\begin{align*} 
	\widetilde{TF} & (X_1,v_1g_1) \\
	&=
	((Tf)(X_1),T\bm_2((Tf)(X_1),(T_\1\Theta)(v_1)\Theta(g_1))) \\
	&\mathop{=}\limits^{\eqref{TG_struct_eq1}}
	((Tf)(X_1),\underbrace{((T\psi)(X_1)\psi(n_1)^{-1}+ \Ad_{G_2}(\psi(n_1))(T_\1\Theta)(v_1))}_{\in\gg_2}\underbrace{(\psi(n_1)\Theta(g_1))}_{\in G_2})
\end{align*}
for arbitrary $n_1\in N_1$, $X_1\in T_{n_1}N_1$, $g_1\in G$, and $v_1\in T_\1G_1=\gg_1$. 

It then follows by \eqref{triv_eq2} that the mapping 
$$\Atiyah(F)\colon TN_1\oplus(N_1\times\gg_1) \to TN_2\oplus(N_2\times\gg_2)$$ 
is given by the formula 
\begin{equation}
	\label{triv_eq3}
	(\Atiyah(F))(X_1,v_1)=((Tf)(X_1),(T\psi)(X_1)\psi(n_1)^{-1}+ \Ad_{G_2}(\psi(n_1))(T_\1\Theta)(v_1))
\end{equation}
for $n_1\in N_1$, $X_1\in T_{n_1}N_1$, $g_1\in G$, and $v_1\in T_\1G_1=\gg_1$.

Using Remark~\ref{trivLiealg_morph}
we now check that the mapping $\Atiyah(F)$ is a morphism of trivial Lie algebroids 
over the map $f\colon N_1\to N_2$. 
To this end we first note that 
\begin{align}
	f^{!!} & (TN_2\oplus(N_2\times\gg_2)) \nonumber \\
	&=\{(X_1,(X_2,v_2))\in TN_1\times (TN_2\oplus(N_2\times\gg_2))\mid (Tf)(X_1)=X_2 \} 
	\nonumber \\
	\label{triv_eq4}
	&\simeq\{(X_1,v_2)\mid X_1\in TN_1, v_2\in\gg_2\} \\
	&= TN_1\oplus(N_1\times\gg_2). \nonumber
\end{align}
Then we have 
\begin{align}
\Atiyah(F)^{!!}  \colon TN_1\oplus & (N_1\times \gg_1) \to TN_1\oplus(N_1\times\gg_2), 
\nonumber \\
\Atiyah(F)^{!!}(X_1,v_1)& =(X_1,\Atiyah(F)(X_1,v_1)) \nonumber \\
&\simeq (X_1,(T\psi)(X_1)\psi(n_1)^{-1}+ \Ad_{G_2}(\psi(n_1))(T_\1\Theta)(v_1))
\nonumber
\end{align}
Here $\simeq$ denotes the vector bundle isomorphism over $\id_{N_1}$ that was used in \eqref{triv_eq4}, which allows us to disregard $X_2$ in the triple $(X_1,(X_2,v_2))$ if $(Tf)(X_1)=X_2$. 
By Remark~\ref{trivLiealg_morph}, in order to prove that $\Atiyah(F)$ is a morphism of Lie algebroids over the map $f\colon N_1\to N_2$, it is necessary and sufficient to check that the above map $\Atiyah(F)^{!!}$ is a morphism of Lie algebroids 
over the identity map~$\id_{N_1}$. 
This last condition can be checked using 
Remark~\ref{trivLiealg_end}. 
(See also Equation~\eqref{deriv_eq1} in Remark~\ref{deriv}.)  
\end{example}

\begin{definition}
	\label{abstr_def}
	\normalfont 
	An \emph{abstract Atiyah sequence} over a smooth manifold~$N$ is a short exact sequence of Lie algebroids  
	\begin{equation}
		\label{abstr_def_eq1}
		\xymatrix{
			L\ \ar@{>->}[r]^{\inc}  & A\ar@{->>}[r]^{\anc\ \ }  & TN  
		}
	\end{equation}
	where $A$ is a transitive Lie algebroid with its anchor $\anc\colon A\to TN$ 
	and $L$ is a Lie algebra bundle over the manifold~$N$.  
	If \begin{equation*}
		\xymatrix{
			L'\ \ar@{>->}[r]^{\inc'}  & A'\ar@{->>}[r]^{\anc'\ } & TN 
		}
	\end{equation*}
	is another abstract Atiyah sequence over $N$, then a \emph{morphism of these abstract Atiyah sequences} consists of two morphisms of Lie algebroids $\psi\colon L\to L'$ and $\varphi\colon A\to A'$ satisfying $\varphi\circ\inc=\inc'\circ\psi$. 
\end{definition}

\begin{definition}
	\label{AP_categ}
	\normalfont
	We denote by 
	$\PB$ 
	the category whose objects are the principal \Q-bundles over smooth manifolds 
	having a \Q-group as the structure group, 
	while the morphisms of principal \Q-bundles are defined as in Definition~\ref{AP_def_morph}. 
	
	We also denote by 
	$\AB$ 
	the category whose objects are the abstract Atiyah sequences, while its morphisms are defined as in Definition~\ref{abstr_def}. 
\end{definition}

Now we can prove the following version of \cite[Th. 1]{At57} in the framework of \Q-manifolds. 

\begin{proposition}
\label{AP_prop}
In Definition~\ref{AP_def}, the set $\Atiyah(P)$ has the structure of a smooth manifold with the following properties: 
\begin{enumerate}[{\rm(i)}]
	\item\label{AP_prop_item1} 
	The quotient map $q_{TP}\colon TP\to \Atiyah(P)$ is a \Q-submersion. 
	\item\label{AP_prop_item2} 
	The map $\Ag(\beta)\colon \Atiyah(P)\to N$ is a smooth vector bundle. 
	\item\label{AP_prop_item3} 
	We have the short exact sequence of smooth vector bundles 
	\begin{equation}
	\label{AP_prop_eq1}
		\xymatrix{
			0 \ar[r]& P\times_{\Ad_G}\gg\ar[r]^{\iota_P}& \Atiyah(P)\ar[r]^{a_P} & TN \ar[r]& 0. 
		}
	\end{equation}
\item\label{AP_prop_item4} 
The smooth vector bundle $\Atiyah(P)$ has the structure of a transitive Lie algebroid with its anchor map $a_P\colon\Atiyah(P)\to TN$. 
\end{enumerate}
\end{proposition}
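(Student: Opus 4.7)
The plan is to construct all the structures on $\Atiyah(P)$ by local trivialization of the \Q-principal bundle $\beta\colon P\to N$, combined with the computation of the Atiyah functor on trivial bundles carried out in Example~\ref{triv}, and then to glue the local pieces via Lemma~\ref{atlas_lemma}. Concretely, by Remark~\ref{local} fix an open covering $N=\bigcup_{i\in I}U_i$ with \Q-principal bundle trivializations $\psi_i\colon\beta^{-1}(U_i)\to U_i\times G$ and transition cocycle $\psi_{ji}\colon U_i\cap U_j\to G$. Since each $\psi_i$ is an isomorphism of \Q-principal bundles (over $\id_{U_i}$, with $\Theta=\id_G$), functoriality of $\Atiyah$ (Definition~\ref{AP_def_morph_algbd}) together with Example~\ref{triv} yields bijections $\Phi_i\colon\Atiyah(\beta^{-1}(U_i))\to TU_i\oplus(U_i\times\gg)$ that we declare to be charts, using them to pull back the smooth vector bundle structure and the trivial Lie algebroid structure of $TU_i\oplus(U_i\times\gg)$ (Definition~\ref{trivLiealg_def}) onto $\Ag(\beta)^{-1}(U_i)$.

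The overlap map $\Phi_j\circ\Phi_i^{-1}$ is the image of the pair (identity map of $U_i\cap U_j$, cocycle $\psi_{ji}$) under the explicit formula~\eqref{triv_eq3}, namely
\begin{equation*}
(X,v)\longmapsto\bigl(X,\;\Delta_G(\psi_{ji})(X)+\Ad_G(\psi_{ji}(n))v\bigr),
\end{equation*}
where $n$ is the base point of $X\in T(U_i\cap U_j)$. By Remark~\ref{deriv} the Darboux derivative $\Delta_G(\psi_{ji})$ is a smooth $\gg$-valued $1$-form on $U_i\cap U_j$, so the above map is a smooth vector bundle automorphism and, by Example~\ref{triv}, a morphism of trivial Lie algebroids. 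This simultaneously produces the smooth vector bundle structure of~\ref{AP_prop_item2}, a Lie algebroid atlas in the sense of Definition~\ref{atlas}, and, via Lemma~\ref{atlas_lemma}, the transitive Lie algebroid structure of~\ref{AP_prop_item4} whose anchor in each chart is the first projection and therefore coincides globally with the intrinsic map $a_P$ of~\eqref{AP_def_eq2}. For~\ref{AP_prop_item3}, the second component of the overlap map transforms by $v\mapsto\Ad_G(\psi_{ji}(n))v$, which is precisely the cocycle of $P\times_{\Ad_G}\gg$ (Example~\ref{bundle_assoc}); since $\iota_P$ in each chart reduces to $v\mapsto(0,v)$, this identifies $P\times_{\Ad_G}\gg$ with $\Ker a_P$ and yields the short exact sequence~\eqref{AP_prop_eq1}. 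Statement~\ref{AP_prop_item1} is local and follows because the quotient $T(U_i\times G)\to TU_i\oplus(U_i\times\gg)$ factors, via the \Q-group isomorphism $\id_{TU_i}\times\rho_G$ of Remark~\ref{TG_struct}, through a classical surjective submersion.

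The main obstacle is to verify rigorously that the overlap formula above genuinely defines an \emph{automorphism of Lie algebroids} of the trivial Lie algebroid, not merely a vector bundle isomorphism. In the language of Remark~\ref{trivLiealg_end} this amounts to taking $\omega:=\Delta_G(\psi_{ji})$ and $\varphi^+(n):=\Ad_G(\psi_{ji}(n))$, and then checking the Maurer--Cartan equation for $\omega$ together with the compatibility condition linking $\omega$ and $\varphi^+$; both are consequences of the identity~\eqref{deriv_eq1} applied to the \Q-smooth map $\psi_{ji}$ into the \Q-group $G$, and they are invariant features of Darboux derivatives as established in Remark~\ref{deriv}. Once this is settled, the proof is completed by a formal gluing argument that requires no further input from the \Q-structure on $P$ beyond what is already encoded in Example~\ref{triv}.
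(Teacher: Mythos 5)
Your proposal is correct and follows essentially the same route as the paper's proof: local trivializations from Remark~\ref{local} give charts onto $TU_i\oplus(U_i\times\gg)$, the overlap maps are computed via formula~\eqref{triv_eq3} of Example~\ref{triv} (your $\Delta_G(\psi_{ji})(X)$ is exactly the paper's $T\psi_{ji}(X)\psi_{ji}(n)^{-1}$), the Lie algebroid structure is glued by Lemma~\ref{atlas_lemma}, and both the exact sequence and the \Q-submersion property of $q_{TP}$ are checked chartwise through the right trivialization of $TG$. The verification that the overlaps are Lie algebroid automorphisms is deferred, as in the paper, to Remark~\ref{trivLiealg_end} and Equation~\eqref{deriv_eq1}.
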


\begin{proof}
We resume the notation of Remark~\ref{local} for defining 
the smooth manifold structure of~$\Atiyah(P)$. 
For every $i\in I$ we have the $G$-equivariant \Q-diffeomorphism $\psi_i\colon\beta^{-1}(U_i)\to U_i\times G$, whose tangent map 
$$T\psi_i\colon T(\beta^{-1}(U_i))\to T(U_i\times G)=T(U_i)\times TG$$ 
is a $TG$-equivariant \Q-diffeomorphism. 

We define 
\begin{equation}
	\label{AP_prop_proof_eq1}
\chi_i\colon TU_i\oplus(U_i\times \gg)\to (TP)/G=\Atiyah(P)
\end{equation}
by 
\begin{equation}
\label{AP_prop_proof_eq2}
\chi_i(w,v):=(T\psi_i)^{-1}(\{w\}\times 
\omega_G^{-1}(v))
\mathop{=}\limits^{\eqref{TG_struct_eq2}}
(T\psi_i)^{-1}(\{w\}\times 
vG)
\end{equation}
It easily follows by \eqref{TG_struct_eq1} that $\chi_i(w,v)\in (TP)/G$ for every $(w,v)\in T(U_i)\oplus(U_i\times \gg)$. 
Moreover, it is easily checked that $\chi_i$ is injective and 
$$\chi_i(T(U_i)\oplus(U_i\times \gg))=(T(U_i\times G))/G=\Ag(\beta)^{-1}(U_i).$$
Now, if $i,j\in I$ with $U_i\cap U_j\ne\emptyset$, then 
$$T\psi_j\circ (T\psi_i)^{-1}=T(\psi_j\circ\psi_i^{-1})\colon T(U_i\cap U_j)\times TG
\to T(U_i\cap U_j)\times TG$$
where the mapping $\psi_j\circ\psi_i^{-1}\colon (U_i\cap U_j)\times G\to (U_i\cap U_j)\times G$ is given by \eqref{local_eq3}. 
 
It then follows by 
\eqref{triv_eq3} in Example~\ref{triv}
that the mapping 
$$\chi_j^{-1}\circ\chi_i\vert_{T(U_i\cap U_j)\times\gg}\colon T(U_i\cap U_j)\times\gg \to T(U_i\cap U_j)\times\gg$$ 
is given by the formula 
\begin{equation}
\label{AP_prop_proof_eq3}
(\chi_j^{-1}\circ\chi_i)(w,v)=(w,T\psi_{ji}(w)\psi_{ji}(n)^{-1}+ \Ad_G(\psi_{ji}(n))v)
\end{equation}
for $n\in U_i\cap U_j$, $w\in T_n(U_i\cap U_j)$, and $v\in \gg$.  

We now check the remaining assertions in the statement. 

\eqref{AP_prop_item1} 
For every $i\in I$, we note the following diagram which is commutative 
by~\eqref{AP_prop_proof_eq2},
$$\xymatrix{
	T(U_i)\times TG \ar[rr]^{\id_{T(U_i)}\times\omega_G}& &T(U_i)\oplus(U_i\times \gg) \ar[d]^{\chi_i}\\
	T(\beta^{-1}(U_i))  \ar[u]^{T\psi_i} \ar[rr]^{q_{TP}\vert_{T(\beta^{-1}(U_i)) }}& &\Ag(\beta)^{-1}(U_i)
}$$
and whose vertical arrows are \Q-diffeomorphisms while the upper horizontal arrow is a \Q-submersion.  
It follows that the lower horizontal arrow in the above diagram is a \Q-submersion as well, and this completes the proof of the fact that $q_{TP}$ is a \Q-submersion. 

\eqref{AP_prop_item2}
It follows by the above construction that the mappings \eqref{AP_prop_proof_eq1}--\eqref{AP_prop_proof_eq2} constitute a smooth vector bundle atlas for $\Ag(\beta)\colon \Atiyah(P)\to N$ and its fibre $\Ag(\beta)^{-1}(n)$ is isomorphic to $T_nN\times\gg$ for arbitrary $n\in N$. 

\eqref{AP_prop_item3}
For every $i\in I$ we have the commutative diagram of vector bundles over $U_i$, 
in which we use notation for associated vector bundles from Example~\ref{bundle_assoc}
\begin{equation*}
	\xymatrix{
		0 \ar[r]& U_i\times\gg\ar[r] \ar[d]_{\tau_i}& T(U_i)\oplus(U_i\times \gg) \ar[d]_{\chi_i} \ar[r] & T(U_i) \ar[d]^{\id_{T(U_i)}} \ar[r]& 0\\
		0 \ar[r]& \beta_{\Ad_G,\gg}^{-1}(U_i)\ar[r]^{\iota_P} & \Ag(\beta)^{-1}(U_i)\ar[r]^{\quad a_P} & T(U_i) \ar[r]& 0
	}
\end{equation*}
whose upper row consists of the obvious maps and is a short exact sequence, 
hence the lower row is a short exact sequence as well. 
This shows that the sequence of vector bundles~\eqref{AP_prop_eq1} is exact. 

\eqref{AP_prop_item4} 
It follows by Example~\ref{triv} that the mapping in \eqref{AP_prop_proof_eq3} is a morphism of trivial Lie algebroids, 
that is, the family $(\chi_i\colon TU_i\oplus(U_i\times\gg)\to\Atiyah(P))_{i\in I}$ is a Lie algebroid atlas of~$\Atiyah(P)$. 
Thus the Lie algebroid structure of $\Atiyah(P)$ is obtained by an application of Lemma~\ref{atlas_lemma}.
\end{proof}

\begin{remark}[localization]
	\label{AP_loc}
	\normalfont 
	Assume the setting of Definition~\ref{AP_def} and fix an arbitrary open subset $U\subseteq N$. 
	Denote $P_U:=\beta^{-1}(U)\subseteq P$ and $\beta_U:=\beta\vert_{P_U}\colon P_U\to U$. 
	Also consider the inclusion maps $F_U\colon P_U\hookrightarrow P$ and $f_U\colon U\hookrightarrow N$. 
	
	For the vector bundle $\Ag(\beta)\colon \Atiyah(P)\to N$ in  Proposition~\ref{AP_prop}\eqref{AP_prop_item2}, consider its restricted vector bundle 
	$\Atiyah(P)_U:=\Ag(\beta)^{-1}(U)$ with the vector bundle projection 
	$\Ag(\beta)_U:=\Ag(\beta)\vert_{\Atiyah(P)_U}\colon \Atiyah(P)_U\to U$. 
	Then it is easily seen that the mapping $\Atiyah(F_U)\colon \Atiyah(P_U)\to \Atiyah(P)$ 
	given by Definition~\ref{AP_def_morph_algbd} (for $\Theta:=\id_G$) 
	gives an isomorphism of vector bundles 
	\begin{equation*}
	\xymatrix{
	\Atiyah(P_U) \ar[d]_{\Ag(\beta)_U} \ar[r]^{\Ag(F_U)}& \Atiyah(P)_U \ar[d]^{\Ag(\beta)\vert_{\Atiyah(P)_U}} \\
	U \ar[r]^{\id_U}& U
}
	\end{equation*}
	
\end{remark}

\begin{proposition}
\label{AP_morph_algbd_prop}
In Definition~\ref{AP_def_morph_algbd}, the mapping $\Ag(F)\colon \Atiyah(P_1)\to\Atiyah(P_2)$ is a morphism of Lie algebroids over the base map $f\colon N_1\to N_2$. 
\end{proposition}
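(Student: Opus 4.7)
The plan is to reduce to the trivial case (treated explicitly in Example~\ref{triv}) by using the localization criterion for morphisms of transitive Lie algebroids stated in Remark~\ref{trivLiealg_morph_loc}. Since $\Atiyah(P_2) \to N_2$ is a transitive Lie algebroid by Proposition~\ref{AP_prop}\eqref{AP_prop_item4}, it suffices to exhibit, for each point $n_1\in N_1$, open neighbourhoods $U\subseteq N_1$ of $n_1$ and $V\subseteq N_2$ of $f(n_1)$ with $f(U)\subseteq V$, such that the restricted map
$$\Atiyah(F)_{UV}\colon \Atiyah(P_1)_U\to\Atiyah(P_2)_V$$
is a morphism of Lie algebroids over $f|_U\colon U\to V$.

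The first step is to pick, using Remark~\ref{local} applied to both bundles, an open $U\subseteq N_1$ around $n_1$ and an open $V\subseteq N_2$ around $f(n_1)$ over which $P_1$ and $P_2$ are trivializable, together with $f(U)\subseteq V$. The equivariance hypothesis $F(p\cdot g)=F(p)\cdot\Theta(g)$ together with $\beta_2\circ F=f\circ\beta_1$ forces the restriction $F_{UV}\colon \beta_1^{-1}(U)\to\beta_2^{-1}(V)$, read off in these local trivializations, to be a morphism of trivial \Q-principal bundles $U\times G_1\to V\times G_2$ of the form $(n,g)\mapsto(f(n),\psi(n)\Theta(g))$ for a uniquely determined \Q-smooth map $\psi\colon U\to G_2$; this is exactly the shape of morphisms of trivial bundles described in the first paragraph of Example~\ref{triv}.

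Next, Remark~\ref{AP_loc} gives natural isomorphisms of vector bundles (in fact of Lie algebroids, thanks to the explicit Lie algebroid atlases built in the proof of Proposition~\ref{AP_prop} via the local charts $\chi_i$) between $\Atiyah(P_j)_{U_j}$ and $\Atiyah((P_j)_{U_j})$, and these isomorphisms intertwine $\Atiyah(F)_{UV}$ with $\Atiyah(F_{UV})$. Under the identifications $\chi_i$ of Proposition~\ref{AP_prop}, the latter becomes precisely the map between trivial Lie algebroids $TU\oplus(U\times\gg_1)\to TV\oplus(V\times\gg_2)$ computed in formula~\eqref{triv_eq3} of Example~\ref{triv}, and it was verified there (using Remark~\ref{trivLiealg_morph} together with the description of endomorphisms of trivial Lie algebroids recorded in Remark~\ref{trivLiealg_end} and the Darboux derivative identity~\eqref{deriv_eq1}) that this map is a morphism of trivial Lie algebroids over~$f|_U$.

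Thus each local piece $\Atiyah(F)_{UV}$ is a Lie algebroid morphism, and Remark~\ref{trivLiealg_morph_loc} then yields that $\Atiyah(F)\colon\Atiyah(P_1)\to\Atiyah(P_2)$ is a morphism of Lie algebroids over $f\colon N_1\to N_2$. The only delicate point — which I expect to be the main obstacle — is verifying that all the identifications are mutually compatible: specifically, that the local charts $\chi_i$ of Proposition~\ref{AP_prop} intertwine $\Atiyah(F)$ with the map~\eqref{triv_eq3}, and that this identification is compatible with the restriction isomorphism of Remark~\ref{AP_loc}. Both are bookkeeping consequences of the definitions of $\Atiyah(F)$ (Definition~\ref{AP_def_morph_algbd}) and of $\chi_i$ in \eqref{AP_prop_proof_eq1}--\eqref{AP_prop_proof_eq2}, combined with the right trivialization $\rho_G$ of tangent bundles of \Q-groups in Remark~\ref{TG_struct}, but they must be traced through carefully.
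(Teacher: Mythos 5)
Your proposal is correct and follows essentially the same route as the paper: reduce via the localization criterion of Remark~\ref{trivLiealg_morph_loc} and the restriction isomorphism of Remark~\ref{AP_loc} to the case of trivial \Q-principal bundles, where the claim is exactly the computation of Example~\ref{triv}. The paper additionally records the global commutativity of the anchor diagram $Tf\circ a_{P_1}=a_{P_2}\circ\Ag(F)$ up front, but this is subsumed by the local verification, so your argument is complete as written.
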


\begin{proof}
We first note that the diagram 
\begin{equation*}
\xymatrix{
\Atiyah(P_1) \ar[d]_{a_{P_1}} \ar[r]^{\Ag(F)} & \Atiyah(P_2) \ar[d]^{a_{P_2}}  \\
TN_1 \ar[r]_{Tf} & TN_2
}
\end{equation*}
is commutative since for every $v\in TP_1$ we have 
\begin{align*}
(Tf\circ a_{P_1})(vG_1)
&\mathop{=}\limits^{\eqref{AP_def_eq2}}
Tf(T\beta_1(v))=(T(f\circ\beta_1))(v) =(T(\beta_2\circ F))(v) \\
& =T\beta_2(TF(v))
\mathop{=}\limits^{\eqref{AP_def_eq2}}a_{P_2}(TF(v)G_2)
=a_{P_2}(\Ag(F)(vG_1))
\end{align*}
where the last equality follows by Definition~\ref{AP_def_morph_algbd}. 

In order to complete the proof we use the localization property of Lie algebroid morphisms as shown in Remark~\ref{trivLiealg_morph_loc}. 
It thus follows that it suffices to show that the restriction of $\Ag(F)$ from $\Atiyah(P_1)_U$ to $\Atiyah(P_2)_V$ is a morphism of Lie algebroids for every $U$ in an open covering $\Uc$ of $N_1$ and $V$ in an open covering $\Vc$ of $N_2$ with $f(U)\subseteq V$. 
We select an open covering $\Uc$ of $N_1$ such that $(P_1)_U$ is a trivial principal \Q-bundle for every $U\in \Uc$ and an open covering $\Vc$ of $N_2$ such that $(P_2)_V$ is a trivial principal \Q-bundle for every $V\in \Vc$. 
Using Remark~\ref{AP_loc}, it follows that it suffices to show that $\Ag(F)\colon\Atiyah(P_1)\to\Atiyah(P_2)$ is a Lie algebroid morphism if both principal \Q-bundles $P_1$ and $P_2$ are trivial. 
In this special case the assertion follows by Example~\ref{triv}, and we are done. 
\end{proof}

\begin{definition}
\label{AP_functor}
\normalfont 
For every principal \Q-bundle $\beta\colon P\to N$ with structure group~$G$, 
its \emph{Atiyah sequence} is the short exact sequence~\eqref{AP_prop_eq1} 
consisting of Lie algebroids over~$N$, where $P\times_{\Ad_G}\gg$ is a Lie algebra bundle with its fibre~$\gg$. 

The \emph{Atiyah functor} is the correspondence 
\begin{equation*}
\Atiyah\colon \PB\to\AB
\end{equation*}
defined on objects as in Definition~\ref{AP_def} and on morphisms 
as in Definition~\ref{AP_def_morph_algbd} and Proposition~\ref{AP_morph_algbd_prop}. 
\end{definition}


\section{Integration of abstract Atiyah sequences}
\label{Sect5}

The main result of this section is an illustration of 
the role that the \Q-manifolds play in integration problems on Lie algebroids. 
Specifically, we prove that every abstract Atiyah sequence arises from some principal \Q-bundle  (Theorem~\ref{AP_th}).

\begin{definition}
\label{cover_def}
\normalfont 
A \emph{simple open covering} of a smooth manifold is an open covering $(U_i)_{i\in I}$ with the property that for every finite subset $I_0\subseteq I$ the open set $\bigcap\limits_{i\in I_0}U_i$ is either empty or contractible. 
\end{definition}

\begin{remark}
\label{cover_ref}
\normalfont
As noted e.g. in \cite[\S 1.2, page 91]{Ko70}, for every smooth manifold $N$ one can construct simple open coverings using convex neighborhoods with respect to any Riemannian metric. 
This argument also shows that if $N$ is second countable then it has a simple open covering which is countable. 
\end{remark}

\begin{theorem}
\label{AP_th}
Every abstract Atiyah sequence over a second countable, smooth  manifold is isomorphic to the Atiyah sequence of some principal \Q-bundle. 
\end{theorem}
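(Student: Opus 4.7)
The plan is to construct the desired \Q-principal bundle from a suitable system of transition functions, by adapting to the \Q-category the classical reconstruction of a principal bundle from the \v{C}ech cocycle associated with a transitive Lie algebroid. By Remark~\ref{cover_ref}, pick a countable simple open covering $(U_i)_{i\in I}$ of $N$. Let $\gg$ be the typical fiber of~$L$; since each $U_i$ is contractible, $L|_{U_i}$ is trivializable, and by the standard local triviality of transitive Lie algebroids (cf.\ \cite{Ma05}) one can build Lie algebroid isomorphisms $S_i\colon TU_i\oplus(U_i\times\gg)\to A|_{U_i}$ carrying $U_i\times\gg\hookrightarrow TU_i\oplus(U_i\times\gg)$ to $\inc\colon L|_{U_i}\hookrightarrow A|_{U_i}$, yielding a Lie algebroid atlas in the sense of Definition~\ref{atlas}. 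On each double overlap $U_{ij}:=U_i\cap U_j$ the map $S_{ij}:=S_i^{-1}\circ S_j$ is an automorphism of the trivial Lie algebroid over $U_{ij}$, so by Remark~\ref{trivLiealg_end} it is determined by a pair $(\omega_{ij},\varphi^+_{ij})$ consisting of a $\gg$-valued Maurer-Cartan form $\omega_{ij}\colon TU_{ij}\to\gg$ and a smooth map $\varphi^+_{ij}\colon U_{ij}\to\Aut(\gg)$; the identities $S_{ki}=S_{kj}\circ S_{ji}$ give compatibility conditions for these data on triple overlaps.

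Next, I would choose a connected Lie group $\widetilde{G}$ with Lie algebra $\gg$ such that each $\varphi^+_{ij}$ factors through $\Ad_{\widetilde{G}}\colon\widetilde{G}\to\Aut(\gg)$ (passing to a covering or a suitable extension if needed). Since each $U_{ij}$ is contractible and hence simply connected, integrating the Maurer-Cartan form $\omega_{ij}$ gives, after fixing a base value, a smooth map $\widetilde{\psi}_{ji}\colon U_{ij}\to\widetilde{G}$ whose Darboux derivative (Remark~\ref{deriv}) is $\omega_{ij}$ and for which $\Ad_{\widetilde{G}}\circ\widetilde{\psi}_{ji}=\varphi^+_{ij}$. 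On triple overlaps $U_{ijk}$ the cocycle defect $\widetilde{\psi}_{ki}^{-1}\widetilde{\psi}_{kj}\widetilde{\psi}_{ji}$ has vanishing Darboux derivative and trivial adjoint action, so it is a locally constant element $\lambda_{ijk}\in Z(\widetilde{G})$; let $\Lambda\subseteq Z(\widetilde{G})$ be the subgroup generated by these~$\lambda_{ijk}$.

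The crux of the argument is to verify that $\Lambda$ is pseudo-discrete in the sense of \cite{Br73}. Countability of $\Lambda$ follows at once from the countability of the covering (only countably many triples $(i,j,k)$ arise), but its pseudo-discreteness is precisely the feature that forces one to leave the category of Lie groupoids, cf.\ \cite{CrFe03,AM85}; it must be established by a monodromy-type analysis identifying $\Lambda$ with the image of a period homomorphism built from integrating the Maurer-Cartan equation along loops in $N$ (this is where the hypothesis that $N$ is connected enters in an essential way). Granting this, $G:=\widetilde{G}/\Lambda$ is a \Q-group whose quotient map $\pi_G\colon\widetilde{G}\to G$ is a \Q-atlas, and $\psi_{ji}:=\pi_G\circ\widetilde{\psi}_{ji}\colon U_{ij}\to G$ is a \Q-smooth family of $G$-valued transition functions satisfying the cocycle condition~\eqref{local_eq3}.

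Applying Proposition~\ref{bundle} to $(\psi_{ji})$ then produces a \Q-principal bundle $\beta\colon P\to N$ with structure group $G$ realizing these transition functions. To identify $\Atiyah(P)$ with $A$, recall from the proof of Proposition~\ref{AP_prop} the Lie algebroid trivializations $\chi_i\colon TU_i\oplus(U_i\times\gg)\to\Atiyah(P)|_{U_i}$; by formula~\eqref{AP_prop_proof_eq3} their overlap maps are controlled exactly by $\Delta_G(\psi_{ji})=\omega_{ij}$ and $\Ad_G\circ\psi_{ji}=\varphi^+_{ij}$, so $\chi_j^{-1}\circ\chi_i=S_{ij}$ on every $U_{ij}$. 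Consequently the family $(S_i\circ\chi_i^{-1})_{i\in I}$ glues to a Lie algebroid isomorphism $\Atiyah(P)\to A$ over $\id_N$, which restricts to an isomorphism $P\times_{\Ad_G}\gg\to L$ intertwining $\iota_P$ with $\inc$ and intertwines $a_P$ with $\anc$, yielding the required isomorphism of abstract Atiyah sequences.
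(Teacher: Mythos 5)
Your construction follows essentially the same route as the paper's proof: a countable simple covering, local trivializations $S_i$ with overlap data $(\omega_{ij},\varphi^+_{ij})$, lifting to maps $\widetilde{\psi}_{ji}$ into a simply connected group $\widetilde{G}$ by integrating the Maurer--Cartan data, dividing $\widetilde{G}$ by the central subgroup $\Lambda$ generated by the cocycle defects, and then applying Proposition~\ref{bundle} and matching the overlap isomorphisms of $\Atiyah(P)$ (via \eqref{AP_prop_proof_eq3}) with those of $A$ to conclude by Lemma~\ref{atlas_lemma}.

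The one step you leave open --- and mischaracterize --- is what you call the crux. No monodromy or period-homomorphism analysis is needed to establish that $\Lambda$ is pseudo-discrete: a \emph{countable} subgroup of a Lie group is automatically pseudo-discrete, because any continuous map from a connected manifold into a countable subset of a Hausdorff space is constant. Since the covering is countable, $\Lambda$ is at most countable, and the paper concludes directly (citing Barre) that $\widetilde{G}\to\widetilde{G}/\Lambda$ is a \Q-atlas and the quotient is a \Q-group. The monodromy/period analysis you allude to is what would be required to show that $\Lambda$ is \emph{discrete}, i.e.\ that the quotient is an honest Lie group --- and that is precisely the step that can fail and that working with \Q-groups is designed to bypass. (Connectedness of $N$ is not what makes $\Lambda$ pseudo-discrete; it is used earlier, to guarantee a single typical fiber $\gg$ for the transitive algebroid, while second countability yields the countable covering.) Once this step is filled in as above, your argument closes and coincides with the paper's.
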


\begin{proof}
We consider an abstract Atiyah sequence denoted as in \eqref{abstr_def_eq1}.  
Using Remark~\ref{cover_ref}, we select a countable simple open covering $(U_i)_{i\in\NN}$ of $N$. 
Then there exists a finite-dimensional real Lie algebra $\gg$ 
such that for every $i\in\NN$ there exists a Lie algebroid isomorphism 
\begin{equation*}
S_i\colon TU_i\oplus(U_i\times\gg)\to A_{U_i}
\end{equation*}
with its corresponding flat connection $\Theta_i:=S\vert_{TU_i}\colon TU_i\to A_{U_i}$ and 
\LAB\ chart $\psi_i\colon U_i\times\gg\to L_{U_i}$ 
satisfying 
\begin{equation*}
\nabla^{\Theta_i}_X(\psi_i(v))=\psi_i(X(v))
\end{equation*}
for every smooth vector field $X\colon U_i\to TU_i$ and $v\in\Ci(U_i,\gg)$, 
as in \cite[\S 8.2, page 317]{Ma05}. 
If $i,j\in\NN$ and $U_{ij}:=U_i\cap U_j\ne\emptyset$, then there exist a $\gg$-valued 1-form 
$$\chi_{ij}\in\Omega^1(U_{ij},\gg)$$ 
and a smooth function 
$$a_{ij}\colon U_{ij}\to\Aut(\gg)$$ 
for which the overlap isomorphism 
$S_i^{-1}\circ S_j\colon TU_{ij}\oplus(U_{ij}\times\gg)\to TU_{ij}\oplus(U_{ij}\times\gg)$ is given by 
\begin{equation}
\label{AP_th_proof_eq1}
(S_i^{-1}\circ S_j)_x(X,v)=(X,\chi_{ij}(x)X+a_{ij}(x)v)\in T_x(U_{ij})\oplus\gg
\end{equation}
for all $x\in U_{ij}$, $X\in T_x(U_{ij})$, and $v\in\gg$. 
(See \cite[proof of Th. 8.2.4]{Ma05}.)

Let $\widetilde{G}$ be a connected, simply connected Lie group whose Lie algebra is (isomorphic to)~$\gg$, and denote by $Z\widetilde{G}$ the center of~$\widetilde{G}$ 
and by $\omega_{\widetilde{G}}\in\Omega^1(\widetilde{G},\gg)$ the right-invariant Maurer-Cartan form on~$\widetilde{G}$. 
Then there exist smooth functions 
$$s_{ij}\colon U_{ij}\to\widetilde{G}$$
satisfying
\begin{equation}
	\label{AP_th_proof_eq2}
s_{ij}^*(\omega_{\widetilde{G}})=\chi_{ij}
\quad\text{ and }\quad \Ad_{\widetilde{G}}\circ s_{ij}=a_{ij},
\end{equation} 
for which there exist elements $e_{ijk}\in Z\widetilde{G}$ satisfying 
$$s_{jk}(\cdot)s_{ik}(\cdot)^{-1}s_{ij}(\cdot)=e_{ijk}\text{ on }U_{ijk}.$$
(See \cite[page 325]{Ma05}.)

We now adapt the method of proof of \cite[Th. 8.3.2]{Ma05}.
Specifically, we denote by $D\subseteq Z\widetilde{G}$ the subgroup generated by the set $\{e_{ijk}\mid i,j,k\in\NN, U_{ijk}\ne\emptyset\}$. 
Then the subgroup $D$ is at most countable, 
hence pseudo-discrete in the sense of Remark~\ref{rem:Q-gr}, 
and then the quotient map 
$$p\colon \widetilde{G}\to \widetilde{G}/D=:G$$
is a \Q-atlas and $G$ is a \Q-group. 
Moreover, if we define 
$$\bar{s}_{ij}:=p\circ s_{ij}\colon U_{ij}\to G$$
then we have 
$$\bar{s}_{jk}(\cdot)\bar{s}_{ik}(\cdot)^{-1}\bar{s}_{ij}(\cdot)=\1\in G\text{ on }U_{ijk}.$$
Then Proposition~\ref{bundle} is applicable, and we obtain a principal \Q-bundle $\beta\colon P\to N$ with structure group~$G$, whose transition functions are $\bar{s}_{ij}\colon U_{ij}\to G$. 

We now check that the Atiyah sequence of the principal \Q-bundle $\beta\colon P\to N$ is isomorphic to the sequence~\eqref{abstr_def_eq1} which was our starting point. 
To this end we note that, by \eqref{AP_th_proof_eq1}--\eqref{AP_th_proof_eq2}, 
the overlap isomorphisms of the Lie algebroid atlas $(S_i\colon TU_i\oplus(U_i\times\gg)\to A_{U_i})_{i\in I}$ coincide with the overlap isomorphisms of the Lie algebroid atlas that defines the Lie algebroid $\Atiyah(P)$ (cf. \eqref{AP_prop_proof_eq3} in the proof of Proposition~\ref{AP_prop} for $\psi_{ji}=\bar{s}_{ji}$). 
Then, by  Lemma~\ref{atlas_lemma}, we see that the Lie algebroids $A$ and $\A(P)$ are isomorphic. 
\end{proof}

\section{\Q-groupoids and their corresponding Lie algebroids}
\label{Sect6}

In this section we introduce the \Q-groupoids (Definition~\ref{BQG}) and we develop their basic Lie theory to the extent that is needed for the purposes of integrating the transitive Lie algebroids. 
We recall that the action of the Lie functor on morphisms of Lie groupoids is nontrivial,  
due to algebraic aspects related to the compatibility with Lie brackets, 
cf. \cite[Ch. 4]{Ma05}. 
This construction encounters an additional problem of differential geometric nature in the case of \Q-groupoids, 
due to the fact that the space of units is not a submanifold as in the classical case, 
as discussed in Appendix~\ref{AppA}. 
This problem is avoided using some auxiliary properties of the vector distributions on \Q-manifolds which are established in Appendix~\ref{AppB}. 

\subsection{Groupoids}${}$\\
 Recall that a \emph{groupoid} is a small category in which every morphism is invertible. 
 Thus, a groupoid $ \Gc\tto M$ is a pair of sets $(\Gc,M)$ 
 along with several structure maps:
\begin{itemize}
\item[(G1)]  two   maps
$ \bs\colon  \Gc\to M$ and $ \bt\colon  \Gc\to M$ 
called \emph{source} and  \emph{target} maps, respectively; 

\item[(G2)] a map
 $\bm\colon \Gc^{(2)}\to \Gc$, $(g,h)\mapsto \bm(g,h)=:gh$, 
 called \emph{multiplication}, defined on $ \Gc^{(2)}:=\{(g,h)\in \Gc\times \Gc\mid  \bs(g) =  \bt(h)\}$, satisfying the associativity in the sense that the product $(gh)k$ is
defined if and only if $ g(hk)$  is defined and in this case  $(gh)k = g(hk)$; 

\item[(G3)] a map $\1 \colon M \to \Gc$ called  \emph{identity section} satisfying~$g\1_x=g$  if $g\in \bs^{-1}(x)$
and 
$g \1_x = g$ for all $g\in \bt^{-1}(x)$
(hence $ \bs\circ \1=\id_{M}= \bt\circ \1$, which shows that $\bs$ and $\bt$ are surjective, while $\1$ is injective);

\item[(G4)]  a map  $ \bi \colon  \Gc\to  \Gc$, $g\mapsto \bi(g)=:g^{-1}$, called  \emph{inversion}, satisfying 
$\bi\circ\bi=\id_\Gc$, 
$ \bs\circ \bi= \bt$, 
and  $gg^{-1}= \1_{\bt(g)}$, $g^{-1}g= \1_{\bs(g)}$ for all $g\in \Gc$.
\end{itemize}
The set $ M$ is  called  the \emph{base} of the groupoid, 
and $ \Gc$ 
is called the  \emph{total space} of the groupoid. 

For  $x,y\in M$, we denote 
$\Gc(x,-):=\bs^{-1}(x)$, $\Gc^y:=\Gc(-,y):=\bt^{-1}(y)$ 
and 
$$ \Gc(x,y):=\{g\in \Gc\mid  \bs(g)=x,  \bt(g)=y\}=\Gc(x,-)\cap \Gc(-,y).$$
For arbitrary $x\in M$ we also denote 
$$ \bt_x:=\bt\vert_{ \Gc(x,-)}\colon \Gc(x,-)\to M.$$
The \emph{isotropy group} of $x\in M$ is  
$$ \Gc(x):=\Gc(x,x)=\{ g\in  \Gc\mid  \bs(g)=x= \bt(g)\}= \bt_x^{-1}(x)\subseteq\Gc.$$
The   \emph{orbit} of $x\in M$ is 
$$ \Gc.x=\{ \bt(g)\mid g\in  \bs^{-1}(x)\}= \bt( \Gc(x,-))= \bt_x( \Gc(x,-))\subseteq M.$$
If $g\in  \Gc$, $ \bs(g)=x$, and $ \bt(g)=y$, 
one has
the left translation 
$L_g\colon  \Gc(-,x)\to  \Gc(-,y)$, $h\mapsto gh$, 
and the right translation $R_g\colon \Gc(x,-)\to  \Gc(y,-)$, $h\mapsto hg$. 
Both maps $L_g$ and $R_g$ are bijective.

Since every groupoid is in particular a category, a  \emph{groupoid morphism} is simply a functor. 
Equivalently, a groupoid morphism from the groupoid $  \Gc\tto M$ to another groupoid $\Hc\tto N$ is given by two maps $\Phi: \Gc\to \Hc$ and $\phi:M\to N$ which are compatible with the structure maps, that is, for all $(g,g')\in\Gc^{(2)}$ and $x\in M$ we have 
$\Phi(\bs(g))= \bs(\Phi(g))$, $\Phi(\bt(g))= \bt(\Phi(g))$, 
$\Phi(g)\Phi(g')=\Phi(gg')$, 
and 
$\Phi( \1_x)= \1_{\phi(x)}$. 
(In particular, $\phi$ is uniquely determined by $\Phi$.)

\subsection{\Q-groupoids}${}$\\
For the notion of \Q-groupoid, we need some auxiliary facts on \Q-manifolds. 
The following lemma is suggested by the fact that smooth retracts of smooth manifolds are embedded submanifolds, cf. \cite[Th. I.1.13]{KlMiSl93}. 

\begin{lemma}
\label{retr}
Let $S$ be a \Q-manifold with 
a subset $S_0\subseteq S$ that has the structure of a \Q-manifold for which the inclusion map $\iota\colon S_0\to S$ is a \Q-immersion. 
If there exists a \Q-smooth map $F\colon S\to S_0$ satisfying $F(s)=s$ for every $s\in S_0$, then $S_0$ is an immersed \Q-submanifold of $S$. 
\end{lemma}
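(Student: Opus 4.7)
The plan is to verify the two conditions that define an immersed \Q-submanifold, namely (i) that the inclusion $\iota\colon S_0 \to S$ is a \Q-immersion, which is given, and (ii) that the \Q-manifold structure on $S_0$ is initial with respect to $\iota$, in the sense that for any \Q-manifold (or smooth manifold) $T$ and any map $g\colon T\to S_0$ one has that $g$ is \Q-smooth if and only if $\iota\circ g\colon T\to S$ is \Q-smooth.

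First I would note that the ``only if'' direction of the initial property is immediate, since the composition of two \Q-smooth maps is \Q-smooth and $\iota$ is \Q-smooth (being a \Q-immersion). For the ``if'' direction, suppose $h:=\iota\circ g\colon T\to S$ is \Q-smooth. The retraction hypothesis $F(s)=s$ for every $s\in S_0$ amounts to the identity $F\circ \iota=\id_{S_0}$. Since $g(t)\in S_0$ for every $t\in T$, it follows that
\[
(F\circ h)(t)=F(\iota(g(t)))=g(t)\quad\text{for every } t\in T,
\]
so $g=F\circ h$ as maps $T\to S_0$. As $F$ and $h$ are both \Q-smooth, their composition $F\circ h=g$ is \Q-smooth as a map into $S_0$, which is what was required.

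I do not anticipate any real obstacle beyond the routine unpacking of definitions. Once the correct notion of immersed \Q-submanifold is in place (\Q-immersion plus the initial property, as indicated in the introduction and Appendix~\ref{AppA}), the argument is essentially formal, relying only on the retraction identity $F\circ\iota=\id_{S_0}$ and the closure of \Q-smooth maps under composition. The single point that merits a little care is to distinguish $g$ viewed as a map into $S_0$ from $h=\iota\circ g$ viewed as a map into $S$, and to notice that applying $F$ to $h$ genuinely produces back the factored map $g$, so that \Q-smoothness transfers from the ambient space to the putative submanifold.
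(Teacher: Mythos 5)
Your proposal is correct and follows essentially the same route as the paper: given a \Q-smooth map $h=\iota\circ g\colon T\to S$ with image in $S_0$, the retraction identity yields $g=F\circ h$, so \Q-smoothness of $g$ as a map into $S_0$ follows from closure of \Q-smooth maps under composition, which is exactly the universality (initial submanifold) property required. The paper's proof is the same computation, just phrased by showing $F\circ f=\iota\circ f$ for a \Q-smooth $f$ with $f(T)\subseteq S_0$.
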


\begin{proof}
Let $T$ be a smooth manifold and $f\circ T\to S$ be a \Q-smooth map 
with $f(T)\subseteq S_0$. 
For every $t\in T$ we have $f(t)\in S_0$, hence $F(f(t))=f(t)=\iota(f(t))$. 
Thus $F\circ f=\iota\circ f$. 
Both maps $f\circ T\to S$ and $F\colon S\to S_0$ are \Q-smooth, hence $F\circ f\colon T\to S_0$ is \Q-smooth, and then $\iota\circ f$ is \Q-smooth, too. 
This completes the proof of the fact that $S_0$ is an immersed \Q-submanifold of $S$. 
(See \cite[Ch. I, \S 2, no. 2, page 240]{Br73}.)
\end{proof}

A version of the following proposition,     
for infinite-dimensional smooth manifolds, 
can be found in \cite[Prop. 3.1]{BGJP19}.  
(See also \cite{BP22}.) 

\begin{proposition}\label{preliminaries} 
Let $ \Gc\tto M $ be a groupoid  satisfying the following conditions: 
\begin{enumerate}[{\rm 1.}]
\item $ \Gc$ is a \Q-manifold and $M$ is a smooth manifold;
\item the map $ \bs\colon \Gc\to M$ is a \Q-submersion;
\item the map $ \bi \colon \Gc  \to \Gc$ is \Q-smooth;
\item  the map $ \1\colon  M\to  \Gc$ is \Q-smooth.
\end{enumerate}
  \noindent Then $\bi$ is a \Q-diffeomorphism,  both $ \bs,\bt\colon \Gc\to M$ are surjective \Q-submersions, and, for every $x\in M$, the sets $ \Gc(x,-)$ and $ \Gc(-,x) $ are  immersed \Q-submanifolds of $ \Gc$.
  Moreover, we have: 
 \begin{enumerate}[{\rm(i)}]
\item
\label{preliminaries_item1}   The set $ \Gc^{(2)}$ is an immersed  \Q-submanifold of $ \Gc\times  \Gc$.
\item\label{preliminaries_item2} The image  
$ \1_M$ of the map $ \1 \colon M  \to \Gc$ is  an immersed  \Q-submanifold of  $\Gc$. 
\end{enumerate}
 \end{proposition}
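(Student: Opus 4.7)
The plan is to dispatch the preliminary claims first using the basic identities of the groupoid structure, then derive (i) from the submersion property of $\bs$, and finally use Lemma~\ref{retr} to handle (ii), which is where the \Q-manifold setting differs most sharply from the classical one.

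First I would observe that $\bi\colon\Gc\to\Gc$ is \Q-smooth by hypothesis and satisfies $\bi\circ\bi=\id_\Gc$, so it is its own inverse and hence a \Q-diffeomorphism. The identity $\bs\circ\1=\id_M=\bt\circ\1$ shows that $\bs$ and $\bt$ are surjective, and, since $\bt=\bs\circ\bi$, composing the \Q-submersion $\bs$ with the \Q-diffeomorphism~$\bi$ shows that $\bt$ is a \Q-submersion too. Then $\Gc(x,-)=\bs^{-1}(x)$ and $\Gc(-,x)=\bt^{-1}(x)$ are immersed \Q-submanifolds, by the standard fact that fibres of a \Q-submersion over a smooth base carry an induced immersed \Q-submanifold structure (this is the \Q-manifold analogue of the regular value theorem, applicable because $M$ itself is a genuine smooth manifold).

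For part (i), I would identify $\Gc^{(2)}=(\bs\times\bt)^{-1}(\Delta_M)$, where $\Delta_M\subseteq M\times M$ is the diagonal. Since $\bs\times\bt\colon\Gc\times\Gc\to M\times M$ is a \Q-submersion (as a product of \Q-submersions between a \Q-manifold and a smooth manifold) and $\Delta_M$ is a smooth embedded submanifold of $M\times M$, its preimage is an immersed \Q-submanifold of $\Gc\times\Gc$. Equivalently, the fibre product characterisation $\{(g,h)\mid \bs(g)=\bt(h)\}$ is a \Q-submersive pullback, and the property is preserved.

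The main obstacle is (ii), since the unit map $\1\colon M\to\Gc$ is not an embedding in general for \Q-groupoids, as emphasized in the introduction and in Appendix~\ref{AppA}. My strategy is to apply Lemma~\ref{retr} with $S:=\Gc$ and $S_0:=\1(M)$, where $S_0$ carries the \Q-manifold structure transported from $M$ via the bijection $\1\colon M\to \1(M)$ (bijectivity coming from $\bs\circ\1=\id_M$). To verify the hypotheses of Lemma~\ref{retr}, I first check that the inclusion $\iota\colon \1(M)\hookrightarrow\Gc$ is a \Q-immersion: this amounts to showing that $\1\colon M\to\Gc$ is a \Q-immersion, which follows because $T\bs\circ T\1=\id_{TM}$ forces $T\1$ to be injective at every point. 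Then the map
\begin{equation*}
F:=\1\circ\bs\colon\Gc\to\1(M)
\end{equation*}
is \Q-smooth as the composite of two \Q-smooth maps, and for any $\1_x\in\1(M)$ we have $F(\1_x)=\1_{\bs(\1_x)}=\1_x$, so $F$ is a retraction onto $\1(M)$. Lemma~\ref{retr} then yields that $\1_M$ is an immersed \Q-submanifold of $\Gc$, completing the proof.
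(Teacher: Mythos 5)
Your proposal is correct and follows essentially the same route as the paper: the same elementary arguments for $\bi$, $\bs$, $\bt$ and the fibres, the same identification of $\Gc^{(2)}$ as the preimage of the diagonal under the \Q-submersion into $M\times M$, and the same application of Lemma~\ref{retr} with the retraction $F=\1\circ\bs$ after noting that $\bs\circ\1=\id_M$ makes $\1$ a \Q-immersion.
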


  \begin{proof} 
 The map $ \bi\colon  \Gc\to  \Gc$ is \Q-smooth and is its own inverse, hence it is a \Q-diffeomorphism. 
Since  $ \bs\circ  \1= \id_M$, 
it follows that the map $\bs\colon \Gc\to M$ is  surjective. 
 Using the fact that 
  $ \bt= \bs\circ  \bi$ where $\bs$ is a \Q-submersion and $\bi$ is a \Q-diffeomorphism, 
  it follows that  $ \bt\colon \Gc\to M$ is \Q-smooth and actually a surjective \Q-submersion, too. 
  
Since $M$ is a smooth manifold, it follows that every singleton subset $\{x\}\subseteq M$ is an embedded submanifold. 
 Then, by Remark~\ref{rem:subm}\eqref{rem:subm_1}, 
the sets $ \Gc(x,-)=\bs^{-1}(\{x\})$ and $ \Gc(-,x)=\bt^{-1}(\{x\}) $ are immersed \Q-submanifolds of~$\Gc$. 
 
 We now prove the Assertions \eqref{preliminaries_item1}--\eqref{preliminaries_item2}. 
 
\eqref{preliminaries_item1}  The map $( \bs, \bt): \Gc\times  \Gc\to M\times M$  is a   \Q-submersion since we have just seen that both $\bs$ and $\bt$ are submersions. 
  	The diagonal $ \Dc\subseteq M\times M$,  is contained in the image of  $( \bs, \bt)$ and we have 
	 $ \Gc^{(2)}=( \bs, \bt)^{-1}(\Dc)$.
	  But $\Dc$ is an embedded submanifold of $M\times M$  and $( \bs, \bt)$ is a \Q-submersion hence this map is transverse to $\Dc$.
Therefore, by \cite[Ch. 1, \S 2, no. 5, Prop., page 243]{Br73},  $ \Gc^{(2)}$ is an immersed  \Q-submanifold of $ \Gc\times  \Gc$. 
  
\eqref{preliminaries_item2} 
Since $ \bs\circ  \1= \id_M$, where both maps $\bs\colon \Gc\to M$ and $\1\colon M\to \Gc$ are \Q-smooth and $\id_M\colon M\to M$ is an immersion, 
it follows by Remark~\ref{rem:subm}\eqref{rem:subm_4} 
that the map $\1\colon M\to \Gc$ is actually a \Q-immersion. 
Then, an application of Lemma~\ref{retr} for $S_0:=\1_M\subseteq\Gc=:S$ and the \Q-smooth map $F:=\1\circ \bs\colon\Gc\to\Gc$ shows that $\1_M$ is an immersed  \Q-submanifold of  $\Gc$. 
This completes the proof. 
%
%
%
	 \end{proof}
 
 \begin{remark}
 \normalfont 
 In the setting of Proposition~\ref{preliminaries}, if we assume only that $\bs\colon\Gc\to M$ is \Q-smooth, then it automatically follows that $\bs$ is a \Q-submersion at $\1_x\in \Gc$ for every $x\in M$
 by a suitable version of Remark~\ref{rem:subm}\eqref{rem:subm_2}, since  $ \bs\circ  \1= \id_M$, where both maps $\bs\colon \Gc\to M$ and $\1\colon M\to \Gc$ are \Q-smooth and $\id_M\colon M\to M$ is a submersion. 
 \end{remark}
  
  \begin{remark}
  \normalfont 
  In the setting of Proposition~\ref{preliminaries}, 
  if moreover $\Gc$ is a smooth manifold then the hypothesis that $ \bi \colon \Gc  \to \Gc$ is (\Q-)smooth follows from the other hypotheses, 
  as proved in \cite[Prop. 1.1.5]{Ma05}.
  \end{remark}

 \begin{definition}\label{BQG}
\normalfont 
A \emph{\Q-groupoid}  is a groupoid  $ \Gc\tto M$  
satisfying the following conditions:
\begin{itemize}
\item[(QG1)] $  \Gc$ is a \Q-manifold and $M$ is a smooth manifold.
 
\item[(QG2)] The structure maps $ \bs: \Gc\to M$, $ \bi \colon \Gc  \to \Gc$, 
and $ \1\colon  M\to  \Gc$ are \Q-smooth.

\item[(QG3)]  
The map $ \bs:  \Gc\to  M$ is  a \Q-submersion. 

 \item[(QG4)]  
The multiplication $ \bm: \Gc^{2}\to \Gc$  is smooth.
\end{itemize}

A   \emph{\Q-groupoid  morphism} between the \Q-Lie groupoids $  \Gc\tto M$ and $\Hc\tto N$ is a  groupoid morphism $\Phi\colon \Gc\to \mathcal{H}$ over $\phi\colon M\to N$
for which $\Phi$ is a \Q-smooth map. 
(Since $\phi$ is the composition of the maps $M\mathop{\to}\limits^{\1}\Gc\mathop{\to}\limits^{\Phi}\Hc\mathop{\to}\limits^{\bs}N$, 
it follows that $\phi$ is a smooth map.) 
\end{definition}

\begin{remark}\label{GroupoidvdS} 
\normalfont 
	A \Q-groupoid is a special case of a diffeological  groupoid  in the sense of \cite[\S 3]{vdS21}.
\end{remark}

\begin{remark}\label{OtherProperties}
\normalfont 
By  Proposition \ref{preliminaries},  the multiplication in Definition \ref{BQG} is defined on a \Q-manifold and a  \Q-groupoid $ \Gc\tto M$ has the properties in 
Proposition~\ref{preliminaries}. 
\end{remark}

 As in  the classical case of the category of smooth manifolds, we have (cf. e.g., \cite[Th. 5.4]{MoMr03}):

  \begin{proposition}\label{generalproperties} 
If  $ \Gc\tto M$ be  \Q-groupoid,  
then the following assertions hold.
 \begin{enumerate}[{\rm(i)}]
 \item  \label{generalproperties_item1} 
  If $g\in \Gc$, $ \bs(g)=x$ and $ \bt(g)=y$,  then both maps $L_g\colon  \Gc(-,x)\to  \Gc(-,y)$  and  $R_g\colon  \Gc(y,-)\to \Gc(x,-)$ are \Q-diffeomorphisms.

  \item \label{generalproperties_item2}  For any $x,y\in M$, the set $\Gc(x,y)=\{h\in  \Gc(x,-)\mid  \bt_x(h)=y\}$ is an immersed  \Q-submanifold of $ \Gc(x,-)$. 
  Moreover  the  isotropy group   $ \Gc(x)$ is a \Q-group  
  and 
  $T_{\1_x}(\Gc(x))=\Ker (T_{ \1_x} \bs)\cap \Ker (T_{ \1_x} \bt) $. 
\end{enumerate}
\end{proposition}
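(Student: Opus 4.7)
The plan is to handle the two parts separately: part~(i) is essentially formal, while part~(ii) reduces via right translation to constructing a \Q-manifold structure on the isotropy group $\Gc(y)$, for which I invoke the distribution theory of Appendix~\ref{AppB}.

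For part~(i), I would exhibit $L_g$ as a composition of \Q-smooth maps. Given $g\in\Gc$ with $\bs(g)=x$ and $\bt(g)=y$, the assignment $h\mapsto(g,h)$ defines a \Q-smooth map from $\Gc(-,x)$ into $\Gc\times\Gc$ whose image lies in $\Gc^{(2)}$; since $\Gc^{(2)}$ is an immersed \Q-submanifold of $\Gc\times\Gc$ by Proposition~\ref{preliminaries}\eqref{preliminaries_item1}, this factors through a \Q-smooth map into $\Gc^{(2)}$. Composing with the smooth multiplication $\bm$ yields a \Q-smooth map $L_g\colon\Gc(-,x)\to\Gc$ whose image lies in the immersed \Q-submanifold $\Gc(-,y)$, and therefore produces a \Q-smooth map $L_g\colon\Gc(-,x)\to\Gc(-,y)$. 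Its two-sided inverse $L_{g^{-1}}$ is \Q-smooth by the same construction, so $L_g$ is a \Q-diffeomorphism. The argument for $R_g$ is analogous, using the map $h\mapsto(h,g)$.

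For part~(ii), I would first reduce the assertion about $\Gc(x,y)$ to the case $x=y$. If $\Gc(x,y)=\emptyset$ the claim is vacuous; otherwise fix $g_0\in\Gc(x,y)$. By part~(i), $R_{g_0}\colon\Gc(y,-)\to\Gc(x,-)$ is a \Q-diffeomorphism, and the identity $\bt(kg_0)=\bt(k)$ shows that it restricts to a bijection $\Gc(y)=\Gc(y,y)\to\Gc(x,y)$. Hence it suffices to endow the isotropy group $\Gc(y)\subseteq\Gc(y,-)$ with an immersed \Q-submanifold structure and transport it via $R_{g_0}$. To construct this structure, I would apply the results on vector distributions on \Q-manifolds from Appendix~\ref{AppB} to the distribution $h\mapsto\Ker T_h\bt_y$ on $\Gc(y,-)$. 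The key observation is that for $h\in\Gc(y)$, right translation $R_h\colon\Gc(y,-)\to\Gc(y,-)$ is a \Q-diffeomorphism with $\bt_y\circ R_h=\bt_y$ and $R_h(\1_y)=h$, so the rank of $T\bt_y$ is constant along $\Gc(y)$. The machinery of Appendix~\ref{AppB} then identifies $\Gc(y)$ with the integral leaf through $\1_y$ of this distribution, yielding the immersed \Q-submanifold structure together with the tangent-space identification $T_{\1_y}\Gc(y)=\Ker T_{\1_y}\bt_y=\Ker T_{\1_y}\bs\cap\Ker T_{\1_y}\bt$, the second equality being a consequence of the equality $T_{\1_y}\Gc(y,-)=\Ker T_{\1_y}\bs$ coming from Proposition~\ref{preliminaries}. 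The \Q-group axioms for $\Gc(y)$ then follow by restricting the ambient structural maps: multiplication is the composition $\Gc(y)\times\Gc(y)\hookrightarrow\Gc^{(2)}\xrightarrow{\bm}\Gc$ with image in $\Gc(y)$, and inversion is the restriction of the \Q-diffeomorphism $\bi$.

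The main obstacle is precisely the construction of the \Q-manifold structure on $\Gc(y)$: since $\bt_y\colon\Gc(y,-)\to M$ need not be a \Q-submersion (its image is the orbit of $y$, which may be strictly lower-dimensional than $M$), the classical preimage theorem is unavailable. The translation argument delivers only pointwise constancy of the rank of $T\bt_y$ along the level set $\Gc(y)$, and converting this pointwise data into an immersed \Q-submanifold structure is exactly what the vector-distribution framework of Appendix~\ref{AppB} is tailored to handle, mirroring in the \Q-setting the classical construction of Lie subgroups as integral leaves of translation-invariant distributions.
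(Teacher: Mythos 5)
Your part (i) is correct and coincides with the paper's argument (exhibit $L_g$ as $\bm$ composed with $h\mapsto(g,h)$, use that $\Gc^{(2)}$ and $\Gc(-,y)$ are immersed \Q-submanifolds together with their universality property, and invert with $L_{g^{-1}}$). The reduction of $\Gc(x,y)$ to the isotropy group via $R_{g_0}$ in part (ii) is also legitimate. The gap is in the construction of the \Q-manifold structure on $\Gc(y)$ itself. You propose to realize $\Gc(y)$ as an integral leaf of $h\mapsto\Ker T_h\bt_y$, but you only establish that this kernel has constant dimension \emph{along} $\Gc(y)$, via the right translations $R_h$ with $h\in\Gc(y)$, which preserve the fibres of $\bt_y$. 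That is not enough: to integrate anything you need $\Ker T\bt_y$ to be a constant-rank smooth subbundle of $T\Gc(y,-)$ on all of $\Gc(y,-)$ (or at least on a saturated neighbourhood), and pointwise constancy of rank along a single level set does not produce an immersed submanifold structure on that level set. The paper supplies exactly this step using \emph{left} translations: for every $g\in\Gc(y,-)$ the map $L_g\colon\Gc(-,y)\to\Gc(-,\bt(g))$ satisfies $\bs\circ L_g=\bs\vert_{\Gc(-,y)}$, whence $TL_g(\Delta_{\1_y})=\Delta_g$ with $\Delta_g:=\Ker T_g\bs\cap\Ker T_g\bt=\Ker T_g\bt_y$; this exhibits $\Delta$ as the image of an injective bundle morphism from the trivial bundle $\Gc(y,-)\times\Delta_{\1_y}$, i.e.\ as a genuine distribution generated by the vector fields $X_v(g)=TL_g(v)$. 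One must then also verify involutivity (the paper does so by observing that integral curves of the $X_v$ stay inside the $\bt$-fibres, so Lie brackets of sections remain tangent to them); your proposal does not address this at all.

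Moreover, the tool you invoke is the wrong one. Appendix~\ref{AppB} contains no integrability statement: Propositions~\ref{distrib3} and~\ref{distrib4} only concern pullbacks of distributions and smoothness of the induced maps, and they are used for the Lie functor, not here. What is actually needed is Barre's Frobenius-type integrability theorem for involutive distributions on \Q-manifolds \cite[Ch. 2, \S 5, no. 0, Th., page 251]{Br73}, which partitions $\Gc(y,-)$ into immersed leaves modelled on $\Delta_{\1_y}$; the maximal leaf through $g$ is a connected component of $\bt_y^{-1}(\bt(g))$, so $\Gc(y)$ is a \emph{union} of leaves (not a single leaf as you assert -- it need not be connected), and this is what yields the immersed \Q-submanifold structure, the \Q-diffeomorphisms between the fibres of $\bt_y$, and the identification $T_{\1_y}\Gc(y)=\Ker T_{\1_y}\bs\cap\Ker T_{\1_y}\bt$. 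The remaining points (restricting $\bm$ and $\bi$ to obtain the \Q-group structure) are fine and match the paper.
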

  
   \begin{proof} 
\eqref{generalproperties_item1}   	
 We have already seen in Proposition~\ref{preliminaries} that   
 $\Gc(x,-)=\bs^{-1}(x)$ and $\Gc(-,x)=\bt^{-1}(x)$ are  immersed \Q-submanifolds of 
 $\Gc$. 

It is easily checked that the mapping 
$$\eta_g\colon \Gc\to \Gc\times\Gc,\quad  h\mapsto (g,h)$$ 
is \Q-smooth 
since $\Gc\times\Gc$ is a \Q-manifold with respect to the product of \Q-smooth structures of $\Gc$ and itself. 
(See also \cite[Ch. 1, \S 1, no. 6, Prop., page 238]{Br73}.) 
Since the inclusion map $\Gc(-,x)\hookrightarrow \Gc$ is \Q-smooth, it then follows that 
$\eta_g\vert_{ \Gc(-,x)}\colon\Gc(-,x)\to \Gc\times\Gc$ is \Q-smooth. 
But the image of this last map is contained in $\Gc^{(2)}$, which is an immersed \Q-submanifold of $\Gc\times\Gc$ by Proposition~\ref{preliminaries}\eqref{preliminaries_item1}. 
It then follows by the universality property of immersed \Q-submanifolds 
(cf. \cite[Ch. 1, \S 2, no. 2, (ii), page 240]{Br73})
that 
$\eta_g\vert_{ \Gc(-,x)}\colon\Gc(-,x)\to \Gc^{(2)}$ is \Q-smooth. 
On the other hand, the multiplication map $\bm\colon  \Gc^{(2)}\to \Gc$ is \Q-smooth, 
hence the composition $\bm\circ\eta_g\vert_{ \Gc(-,x)}\colon\Gc(-,x)\to \Gc$ is \Q-smooth. 
We now note that $L_g=\bm\circ\eta_g\vert_{ \Gc(-,x)}$, hence 
$L_g\colon\Gc(-,x)\to \Gc$ is \Q-smooth. 
Moreover, we have $L_g(\Gc(-,x))=\Gc(-,y)$ and $\Gc(-,y)$ is an immersed \Q-submanifold of $\Gc$ by Proposition~\ref{preliminaries}. 
Then, by  the universality property of immersed \Q-submanifolds again, 
we obtain that the map $L_g\colon\Gc(-,x)\to\Gc(-,y)$ is \Q-smooth. 
By the same argument, the map $L_{g^{-1}}\colon\Gc(-,y)\to\Gc(-,x)$ is \Q-smooth. 
Since $L_{g^{-1}}=(L_g)^{-1}$, we thus see that $L_g\colon\Gc(-,x)\to\Gc(-,y)$ is actually a \Q-diffeomorphism. 

Using a similar reasoning, or the above result and the straightforward equality 
$$\bi\circ L_{g^{-1}}\circ\bi\vert_{\Gc(y,-)}=R_g\colon \Gc(y,-)\to \Gc(x,-)$$
along with the fact that $\bi\colon\Gc\to\Gc$ is a \Q-diffeomorphism, 
we see that the right translation 
$R_g\colon  \Gc(y,-)\to \Gc(x,-)$ is a \Q-diffeomorphism, too.

  \eqref{generalproperties_item2} We adapt the method of proof of \cite[Th. 5.4]{MoMr03}. 
To this end we will first  show that, defining 
  $$\Delta_g:=\Ker T_g \bs\cap \Ker T_g \bt 
  \quad \text{for all }g\in  \Gc(x,-),$$
one obtains a distribution  $\Delta\subseteq T \Gc(x,-)$ which is integrable (in the sense defined in  \cite[Ch. 2, \S 5, no. 0]{Br73}). 
 Indeed  given any $g\in \Gc(x,-)$, we have the left translation 
 \begin{equation}\label{Lgcolon}
 L_g\colon  \Gc(-,x)\to  \Gc(-, \bt(g)), 
\end{equation}
 and moreover $ \bs\circ L_g = \bs\vert_{ \Gc(-,x)}$, hence 
  $$TL_g(\Delta_{ \1_x})=\Delta_g.$$  
 Therefore the map  
 $$\Phi\colon \Gc(x,-)\times \Delta_{ \1_x}\to T \Gc(x,-),\quad \Phi(g,v)=TL_g(v)$$  
 is an injective  morphism over $\id_{ \Gc(x,-)}$ from  the trivial distribution  $ \Gc(x,-)\times \Delta_{ \1}$ on  $ \Gc(x,-)$  whose range is the distribution $\Delta$. 
 
  For any $v\in  \Delta_{\1_x}$ denote by $X_v$ the vector field on $   \Gc(x,-)$ defined by $X_v(g)=TL_g(v)$. 
 Then the set $$\Gamma(\Delta)=\{X_v\mid v\in  \Delta_{ \1_x}\}$$ generates the distribution $\Delta$. 
 Now any integral curve $\gamma: {]-\varepsilon,\varepsilon[} \to  \Gc(x,-)$  of $X\in \Gamma(\Delta)$ with $\gamma(0)=g$ is  also contained in $ \Gc(-,  \bt(g))$. It follows that the Lie bracket $[X,Y](g)$  of vector fields $X$ and $Y$ in $\Gamma(\Delta)$ is tangent to $ \Gc(-, \bt(g))$ and so $\Gamma(\Delta)$ is stable under Lie bracket 
 and so $\Delta$ is involutive. 
 Thus, by  \cite[Ch. 2, \S 5, no. 0, Th., page 251]{Br73}, the distribution $\Delta$ is integrable, i.e., 
 there exists a partition of $ \Gc(x,-)$ into immersed \Q-manifolds  and each one  is modeled on the vector space $\Delta_{ \1_x}$. 
 Moreover, for $g\in  \Gc(x,-)$ , since $\Delta_g=\ker T_g \bt_x$, the maximal leaf through~ $g$ is a connected component of $ \bt^{-1}( \bt(g))$ and so is an immersed   \Q-submanifold  of  $  \Gc(x,-)$. 
 In particular  the isotropy group $ \Gc(x)$ 
 is a union of such leaves and so $ \Gc(x)$ is an immersed \Q-submanifold of $ \Gc(x,-)$. 
 Since $ \Gc(x)=\Gc(x,x)$,  
 we have  $\bm(\Gc(x)\times\Gc(x))\subseteq\Gc(x)$.  
 Since $\bm$ is \Q-smooth and  $ \Gc(x)$  is an immersed \Q-submanifold of $ \Gc$, this easily implies that the restriction of $ \bm$ to  $ \Gc(x)\times  \Gc(x)\subseteq  \Gc^{(2)}$  is smooth as a mapping $\Gc(x)\times  \Gc(x)\to\Gc(x)$.  
 Analogous  arguments work for the inversion $ \bi$ restricted to $ \Gc(x)$. 
 It follows that $ \Gc(x )$ is a \Q-group.
 
  Finally, since    $L_g\colon  \Gc(-,x)\to  \Gc(-,  \bt(g))$,  is a \Q-diffeomorphism,  
  $ \bs\circ L_g = \bs\vert_{ \Gc(-,x)}$,  and  $TL_g(\Delta_{ \1_x})=\Delta_g$,   it follows that the mapping $L_g\vert_{\Gc(x)}\colon\Gc(x)\to   
  \Gc(x,  \bt(g))$ is a \Q-diffeomorphism.
   We conclude that each fibre of $ \bt_x$  is  \Q-diffeomorphic to $ \Gc(x)$.  This proves the first part of the assertion and the Lie algebra of $ \Gc(x)$ is isomorphic to $\ker T_{ \1_x} \bs\cap \ker T_{ \1_x} \bt $ (cf. the construction of the Lie algebroid $\mathcal{AG}$).
\end{proof}

\begin{example}\label{Lie groupoid} 
\normalfont
Since any smooth manifold is a \Q-manifold, any Lie groupoid $ \Gc\tto M$ is also a \Q-groupoid.
\end{example}

\begin{example}\label{Q-group}
\normalfont
 Let $G$ be a \Q-group with its unit element $\1_G\in G$. 
 If $M$ is a singleton $\{\1_G\}$ then $G\tto \{\1\}$ can be considered a \Q-groupoid  whose multiplication, inversion, and unit maps are given by their corresponding  the multiplication, inversion, and the unit element in $G$.
\end{example}

\begin{example}\label{action} 
\normalfont
  Let $G$ be a \Q-group with its unit element $\1_G\in G$ and  $A\colon M\times G\to M$, $(x,g)\mapsto xg$ be a \Q-smooth right action on a  manifold $M$. As in the classical case of Lie groups,   to such action  we can associate a \Q-groupoid $ \Gc\tto M$ defined in the following way:
\begin{itemize}
\item $ \Gc:=M\times G$ 
\item $ \bs(x,g):=x$ and $ \bt(x,g):= A(x,g)=xg$  
\item if $y=xg$ then $ \bm((y,h),(x,g)):=(x,hg)$. 
\item $ \bi(x,g):=(xg,g^{-1})$; 
\item $ \1_x=(x, \1_G)$.
 \end{itemize}
For any $x_0,y_0\in M$ one has 
$\Gc(x_0,y_0)=\{x_0\}\times\{g\in G\mid x_0g=y_0\}$
and in particular the isotropy group at $x_0$ is 
$\Gc(x_0)=\{x_0\}\times G(x_0)$,
where $G(x_0):=\{g\in G\mid x_0g=x_0\}$. 

These assertions can be easily proved by an  application of 
Proposition~\ref {preliminaries}, taking into account the following facts:
\begin{enumerate}[{\rm(a)}]
\item $ \Gc$ is a \Q-manifold as a product of \Q-manifolds;
\item $ \bs$ is a \Q-submersion since it is the projection of $ \Gc$ on $M$;
\item $ \bi$ is a \Q-diffeomorphism  as  it is the map  $(x,g)\mapsto (A(x,g), g^{-1})$ whose components are \Q-smooth maps;
\item $ \bm$ is smooth since each component of the map $((y,h),(x,g))\mapsto (x,hg)$ is smooth and as  $ \Gc^{(2)}$ is an immersed  \Q-submanifold of $ \Gc\times  \Gc$.
\end{enumerate}
\end{example}

\subsection{The Lie algebroid associated to a \Q-groupoid}
\label{LieAlgebroidQQtLieGroupoid}${}$\\
In the same way as in the  classical context of Lie groupoids (cf. e.g., \cite[\S 3.5]{Ma05} or \cite[\S 6.1]{MoMr03}), to every \Q-groupoid there corresponds a  Lie algebroid on  $M$.  
We now sketch its construction.

By \cite[Ch. 1, \S 1, no. 4]{Br73},  each \Q-manifold $S$ has a tangent bundle $\tau_S\colon TS\to S$, where  $TS$ is  a \Q-manifold  and $\tau_S$ is a \Q-submersion.  
The notion of subbundle  of $TS$ and of pullback bundle are discussed in Appendix~\ref{AppB}. 

Every \Q-smooth map $f\colon S\to S'$ between two  \Q-manifolds has a tangent map $Tf\colon TS\to TS'$ which is \Q-smooth and fibrewise linear.  
A  vector field is a \Q-smooth map  $X\colon S\to TS$ such that $\tau_S\circ X=\id_S$, 
and the set  $\Gamma(TS)$ of all vector fields on $S$  has the natural structure of a Lie algebra with respect to the Lie bracket of vector fields. 

Now let $\Gc$ be a \Q-groupoid and 
denote by  $T^\bs \Gc:=\Ker (T\bs)\to \Gc$   the vertical  subbundle of $T \Gc\to \Gc$ associated to the \Q-submersion $ \bs\colon  \Gc\to M$. 
The elements of $T^\bs \Gc$ are called \emph{vertical} tangent vectors on $\Gc$. 
This bundle is involutive (cf. \cite[Ch. 2, \S5, no. 0]{Br73}). 
A vector field $X\colon \Gc\to T\Gc$ is called  \emph{right invariant} 
if it is vertical and $T_h R_g(X(h))= X(hg)$ whenever $(h,g)\in\Gc^{(2)}$.  
Let $\Gamma_{\rm inv}^r( \Gc)$  be the vector space  of  right invariant vector fields. 
Then $\Gamma_{\rm inv}^r(\Gc)$ 
is a subalgebra of the Lie algebra $\Xc(\Gc)$. 
(See \cite[Lemma 3.5.5]{Ma05} or \cite[Prop. 6.1(i)]{MoMr03} for the classical case.)
Proposition \ref{generalproperties}\eqref{generalproperties_item1}  implies that the tangent map $T \bt$ induces a morphism of Lie algebras from $\Gamma_{\rm inv}^r( \Gc)$ to   the Lie algebra $\Gamma(M)$ of vector fields on $M$.

We define $\mathcal{AG}\to M $ as the pull-back  of  the vector bundle $T^\bs \Gc\to  \Gc$ via the unit map $ \1\colon M\to \Gc$. 
That pull-back can be regarded as the restriction of  $T^\bs \Gc$ to the  immersed \Q-submanifold $\1_M\subseteq \Gc$ 
(cf. Proposition~\ref{preliminaries}\eqref{preliminaries_item2}). 

On the one hand, as e.g., in \cite[\S 2.1]{Ma05}, we have a canonical \Q-morphism $\1^!\colon\mathcal{AG}\to T^\bs \Gc $ from the bundle  $\mathcal{AG}\to M $ to the bundle $T^ \bs \Gc\to  \Gc$ and the commutative diagram 
\begin{equation*}
\xymatrix{
\mathcal{AG} \ar[r]^{\1^!} \ar[d] & T^\bs\Gc \ar[d] \\
M \ar[r]^{\1} & \Gc
}
\end{equation*}
Here $\1^!\colon\mathcal{AG}\to T^\bs \Gc $ is a fibrewise isomorphism  
and gives rise to an isomorphism of vector spaces 
$\Gamma(\mathcal{AG})\to \Gamma_{\rm inv}^r( \Gc)$
which 
 can be used to transport the Lie bracket from $\Gamma_{\rm inv}^r( \Gc)$ to 
 the vector space $\Gamma(\mathcal{AG})$ of smooth sections of  $\mathcal{AG}\to M $. 
The Lie bracket on  $\Gamma(\mathcal{AG})$ 
will also be denoted by $[\cdot,\cdot]$, 
and we have an isomorphism of Lie algebras 
$\Gamma(\mathcal{AG})\to\Gamma_{\rm inv}^r( \Gc)$. 
Additionally, $\mathcal{AG}$ is provided with the anchor $\rho=T \bt\vert_{\mathcal{AG}}\colon\mathcal{AG}\to TM$, 
just as in the classical case discussed in  \cite[Lemma 3.5.7]{Ma05} or \cite[Prop. 6.1(iii)]{MoMr03}. 
It follows that  $(\mathcal{AG}, M,\rho,[\cdot,\cdot])$ is a  Lie algebroid.

\subsection*{Towards a Lie functor for \Q-groupoids}
We now discuss the infinitesimal version of a morphism of \Q-groupoids, 
which should be expected to be a morphism of Lie algebroids. 
However, we recall from \cite[Ch. 4]{Ma05} that the corresponding discussion is nontrivial even in the classical case of Lie groupoids. 
Consider a \Q-groupoid morphism 
\begin{equation}
\label{Liefunctor_eq1}
\xymatrix{
	\Gc \ar@<-2pt>[d]_{\bt} \ar@<2pt>[d]^{\bs} \ar[r]^{\Phi} 
	 & \Hc \ar@<-2pt>[d]_{\bt} \ar@<2pt>[d]^{\bs} \\
M \ar[r]^{\phi} & N
}
\end{equation}
Since $\bs\circ\Phi=\phi\circ\bs$, it follows that for every $x\in M$ we have $\Phi(\Gc(x,-))\subseteq\Hc(\phi(x),-)$ and $\Phi_{\1_x}=\1_{\phi(x)}$. 
This further implies $(T\Phi)(T^\bs\Hc)\subseteq\Hc^\bs$. 
Moreover, 
\begin{equation}
\label{Liefunctor_eq2}
(\forall x\in M)\quad (T\Phi)(T_{\1_x}(\Gc(x,-)))\subseteq 
T_{\1_{\phi(x)}} (\Hc(\phi(x),-))
\end{equation}
This leads to the following commutative diagram: 
\begin{equation*}
\xymatrix{\A\Gc \ar[dr]^{\1_M^{!}}  \ar@{.>}[rrr]^{\A\Phi} \ar[dddd] &	 &  & \A\Hc \ar[dl]_{\1_N^{!}} \ar[dddd]\\
 & T^\bs\Gc \ar@{^{(}->}[d] \ar@{.>}[r]^{T\Phi} & T^\bs\Hc \ar@{^{(}->}[d] & \\
& T\Gc \ar[d]_{\tau_\Gc} \ar[r]^{T\Phi}  & T\Hc \ar[d]^{\tau_\Hc}  & \\
& \Gc \ar[r]^{\Phi}  & \Hc & \\
M \ar[ur]^{\1_M} \ar[rrr]^{\phi} & & & N \ar[ul]_{\1_N} 
}
\end{equation*}
Here the unit mapping $\1_M\colon M\to\Gc$ is an immersed submanifold, 
hence $\1_M^{!}\colon\A\Gc\to T^\bs\Gc$ is in turn an immersed submanifold, by Proposition~\ref{distrib4}. 
This further implies that the mapping $\A\Phi\colon\A\Gc\to\A\Hc$, 
well-defined by \eqref{Liefunctor_eq2}, 
is also smooth. 
Thus $\A\Phi\colon\A\Gc\to\A\Hc$ is a (smooth) morphism of locally trivial, smooth, vector bundles 
over the base map $\phi\colon M\to N$. 

We now prove in Proposition~\ref{Liefunctor_prop} that $\A\Phi\colon\A\Gc\to\A\Hc$ is a morphism of Lie algebroids 
in the case when both $\Gc\tto M$ and $\Hc\tto N$ are locally trivial  \Q-groupoids 
(cf. Section~\ref{Sect7}), 
which suffices for the purposes of the present paper. 
The development of the general Lie functor for \Q-groupoids is postponed for a later paper. 

\begin{proposition}
\label{Liefunctor_prop}
Let $\Gc\tto M$ and $\Hc\tto N$ be  \Q-groupoids 
with a \Q-groupoid morphism $\Phi\colon \Gc\to\Hc$ as in  \eqref{Liefunctor_eq1}. 
If 
	both $\Gc\tto M$ and $\Hc\tto N$ are locally trivial  \Q-groupoids, 
then $\A\Phi\colon\A\Gc\to\A\Hc$ is a morphism of Lie algebroids over the base map $\phi\colon M\to N$. 
\end{proposition}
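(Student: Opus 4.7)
My plan is to verify the two conditions in Remark~\ref{trivLiealg_morph} characterizing Lie algebroid morphisms: compatibility with the anchors and compatibility with the brackets. I already know from the discussion preceding the statement that $\A\Phi$ is a well-defined smooth morphism of vector bundles over~$\phi$. The anchor condition $\anc_{\A\Hc} \circ \A\Phi = T\phi \circ \anc_{\A\Gc}$ is immediate: the anchor of the Lie algebroid of a \Q-groupoid is the restriction of $T\bt$ (cf.\ Subsection~\ref{LieAlgebroidQQtLieGroupoid}), and the groupoid-morphism identity $\bt_\Hc\circ\Phi=\phi\circ\bt_\Gc$ passes through the tangent functor and restricts to $\A\Gc\subseteq T^\bs\Gc$.

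The bracket compatibility is the substantive step. Since $\Gc\tto M$ and $\Hc\tto N$ are locally trivial, the Lie algebroids $\A\Gc$ and $\A\Hc$ are transitive, so Remark~\ref{trivLiealg_morph_loc} reduces the verification to checking the morphism condition over arbitrarily small pairs of trivializing open sets $U\subseteq M$ and $V\subseteq N$ with $\phi(U)\subseteq V$. Shrinking as necessary, I may assume that $\Gc\vert_U$ and $\Hc\vert_V$ are trivial \Q-gauge groupoids, so that under the natural trivializations their Lie algebroids become the trivial Lie algebroids $TU\oplus(U\times\gg)$ and $TV\oplus(V\times\hg)$, where $\gg$ and $\hg$ are the Lie algebras of the structure \Q-groups $G$ and $H$.

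In such local coordinates, the restriction of $\Phi$ is determined by a \Q-smooth gauge function $\psi\colon U\to H$ together with a \Q-group morphism $\Theta\colon G\to H$, in exactly the form treated in Example~\ref{triv}. A direct inspection of the construction of $\A\Phi$ from Subsection~\ref{LieAlgebroidQQtLieGroupoid} shows that, under these identifications, $\A\Phi$ is given by formula~\eqref{triv_eq3}. The computation at the end of Example~\ref{triv} — which uses the description of endomorphisms of trivial Lie algebroids in Remark~\ref{trivLiealg_end} together with the Darboux-derivative identity~\eqref{deriv_eq1} — shows that this formula defines a morphism of trivial Lie algebroids. Combining the local checks via Remark~\ref{trivLiealg_morph_loc} yields the global bracket compatibility of $\A\Phi$.

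The main obstacle I expect is the local identification of $\A\Phi$ with the explicit formula~\eqref{triv_eq3}. One must extract the pair $(\Theta,\psi)$ from the general \Q-groupoid morphism~$\Phi$ and then match the construction of $\A\Phi$ via right-invariant vertical vector fields on $\Gc$ and $\Hc$ with the Atiyah-sequence morphism described in Example~\ref{triv} for trivial \Q-principal bundles. This argument is parallel to the classical case but must be carried out in the \Q-category: because \Q-charts are not injective in general and the unit submanifold is only initial rather than embedded (cf.\ Appendix~\ref{AppA}), the classical argument cannot be quoted verbatim and one instead works through the tangent-functor construction of Subsection~\ref{LieAlgebroidQQtLieGroupoid} directly on chart domains, appealing to the distribution lemmas of Appendix~\ref{AppB} to guarantee smoothness of the induced fibrewise linear maps.
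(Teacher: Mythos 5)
Your proposal is correct and follows essentially the same route as the paper's proof: both localize via Remark~\ref{trivLiealg_morph_loc} together with the local triviality of Remark~\ref{decomposing} to reduce to trivial \Q-groupoids, and then verify the morphism condition for the resulting trivial Lie algebroids. The only divergence is that you retain a gauge function $\psi\colon U\to H$ in the local normal form of $\Phi$ and therefore invoke the full computation of Example~\ref{triv} (formula~\eqref{triv_eq3} together with Remark~\ref{trivLiealg_end} and~\eqref{deriv_eq1}), whereas the paper takes the local form to be $\Phi(y,g,x)=(\phi(y),\Theta(g),\phi(x))$, so that $(\A\Phi)^{!!}(X,(x,v))=(X,(x,(T_\1\Theta)(v)))$ and the bracket compatibility is immediate from~\eqref{trivLiealg_def_eq1}; your more general treatment covers the local normal forms the paper's simpler formula implicitly normalizes away.
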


\begin{proof}
Using the local character of morphisms of Lie algebroids (Remark~\ref{trivLiealg_morph_loc} and the fact that both  $\Gc\tto M$ and $\Hc\tto N$ are locally trivial  \Q-groupoids it follows by Remark~\ref{decomposing} that we may assume that both these groupoids are actually trivial, that is $\Gc=M\times G\times M$ and $\Hc=N\times H\times N$ for certain \Q-groups $G$ and $H$ with their Lie algebras $\gg=T_\1 G$ and $\hg=T_\1 H$. 
With the notation from \eqref{Liefunctor_eq1}, since $\Phi$ is a morphism of \Q-groupoids, it easily follows that there exists a morphism of \Q-groups $\Theta\colon G\to H$ such that 
\begin{equation*}
(\forall x,y\in M)(\forall g\in G)\quad \Phi(y,g,x)=(\phi(y),\Theta(g),\phi(x)).
\end{equation*}
We further obtain that $\A\Gc=(TM)\oplus(M\times \gg)$ and $\A\Hc=TN\oplus(N\times \hg)$ are trivial Lie algebroids, and $\A\Phi\colon\A\Gc\to\A\Hc$ is given by 
\begin{equation}
\label{Liefunctor_prop_proof_eq1}
 (\A\Phi)(X,(x,v))=((T_x\phi)(X),(\phi(x),(T_\1\Theta)(v)))\in T_{\phi(x)}N\oplus (\{\phi(x)\}\times\hg)
\end{equation}
if $x\in M$, $X\in T_xM$, and $v\in\gg$.

We then have, cf. \eqref{trivLiealg_morph_eq1}, 
\begin{equation*}
\phi^{!!}(\A\Hc)=\{(X,(Y,(y,w)))\in TM\times\A\Hc\mid (T\phi)(X)=Y\}
\end{equation*}
hence the mapping 
\begin{equation}
	\label{Liefunctor_prop_proof_eq2}
\phi^{!!}(\A\Hc)\to TM\oplus (M\times\hg),\quad (X,(Y,(y,w)))\mapsto (X,(\tau_M(X),w))
\end{equation} 
is  bijective (where $\tau_M\colon TM\to M$ is the tangent bundle of $M$) with its inverse 
$$TM\oplus (M\times\hg)\to \phi^{!!}(\A\Hc),\quad (X,(x,w))\mapsto (X,((T\phi)(X),(\phi(x),w))).$$
In this way we may identify the Lie algebroid $\phi^{!!}(\A\Hc)$ with the trivial Lie algebroid $TM\oplus (M\times\hg)$ over $M$. 

Moreover, cf. \eqref{trivLiealg_morph_eq2}, 
the mapping $(\A\Phi)^{!!}\colon \A\Gc\to \phi^{!!}(\A\Hc)$ is given by 
\begin{equation*}
(\A\Phi)^{!!}(X,(x,v))=(X,(\A\Phi)(X,(x,v)))
\mathop{=}\limits^{\eqref{Liefunctor_prop_proof_eq1}}
(X,((T_x\phi)(X),(\phi(x),(T_\1\Theta)(v))))
\end{equation*}
hence, using the identification~\eqref{Liefunctor_prop_proof_eq2}, 
we may write 
\begin{align}
&(\A\Phi)^{!!}\colon \A\Gc=TM\oplus(M\times\gg)\to TM\oplus(M\times\hg),\nonumber\\
\label{Liefunctor_prop_proof_eq3}
&(\A\Phi)^{!!}(X,(x,v))=(X,(x,(T_\1\Theta)(v))).
\end{align}
Since $T_\1\Theta\colon\gg\to\hg$ is a morphism of Lie algebras, 
it directly follows by the definition of the Lie bracket of trivial Lie algebroids, cf. \eqref{trivLiealg_def_eq1}, that the above mapping $(\A\Phi)^{!!}$ is a morphism of trivial Lie algebroids over the base map $\id_M$, and this completes the proof. 
\end{proof}

\section{Locally trivial  \Q-groupoids}
\label{Sect7}

In this section we briefly discuss locally trivial \Q-groupoids, 
which constitute the main class of \Q-groupoids that are needed for the integration of general transitive Lie algebroids in Corollary~\ref{integration}. 

A \Q-groupoid $ \Gc\tto M$ is called  \emph{transitive} if for any $x,y\in M$, there exists $g\in  \Gc$ such that $\bs(g)=x$ and $\bt(g)=y$. 
A \Q-groupoid $ \Gc\tto M$ is called \emph{locally trivial} if the map $( \bt, \bs): \Gc\to M\times M$ is a surjective \Q-submersion. 
Note that if $\Gc$ is locally trivial then it is transitive since for any pair $(x,y)\in M\times M$, 
we have  $( \bt, \bs)^{-1}(x,y)\not=\emptyset$. The converse is not true in general.

  As for Lie groupoids, cf. \cite[Prop. 1.3.3]{Ma05}, 
  we have the following characterization of  locally trivial \Q-groupoids: 
 
 \begin{proposition}
 	\label{txSubmersion} 
 A \Q-groupoid $ \Gc\tto M$  is locally trivial if and only if for some $x\in M$ the map $ \bt_x\colon \Gc(x,-)\to M$ is a surjective \Q-submersion. 
 Moreover, if this last property is true for one point in $M$, it is then true for all points in $M$. 
 \end{proposition}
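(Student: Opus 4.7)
The plan is to adapt Mackenzie's proof of the classical analogue \cite[Prop.~1.3.3]{Ma05} to the \Q-manifold setting, using the fact that right translations are \Q-diffeomorphisms (Proposition~\ref{generalproperties}\eqref{generalproperties_item1}), that $\Gc(x,-)=\bs^{-1}(x)$ is an immersed \Q-submanifold of $\Gc$ (Proposition~\ref{preliminaries}), and that $\bs\colon\Gc\to M$ is a \Q-submersion (so its fibers have tangent spaces equal to $\Ker T\bs$).

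For the direction ``locally trivial $\Rightarrow$ $\bt_x$ is a surjective \Q-submersion for every $x\in M$'': fix $x\in M$. Surjectivity of $\bt_x$ is immediate from surjectivity of $(\bs,\bt)$. For the submersion property at $g\in\Gc(x,-)$, note that $(\bs,\bt)$ being a \Q-submersion gives a surjection $T_g(\bs,\bt)\colon T_g\Gc\to T_xM\times T_{\bt(g)}M$; restricting to the subspace $\Ker T_g\bs=T_g\Gc(x,-)$ yields a surjection onto $T_{\bt(g)}M$, which is precisely $T_g\bt_x$.

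For the converse direction, I would proceed in two steps. \emph{Step~1 (propagation).} Suppose $\bt_x\colon\Gc(x,-)\to M$ is a surjective \Q-submersion and fix any $y\in M$. Surjectivity of $\bt_x$ produces $h\in\Gc(x,y)$, and then $h^{-1}\in\Gc(y,x)$. By Proposition~\ref{generalproperties}\eqref{generalproperties_item1}, $R_{h^{-1}}\colon\Gc(y,-)\to\Gc(x,-)$ is a \Q-diffeomorphism, and since $\bt(kh^{-1})=\bt(k)$ for $k\in\Gc(y,-)$, we obtain $\bt_y=\bt_x\circ R_{h^{-1}}$. Hence $\bt_y$ is a surjective \Q-submersion too. \emph{Step~2 (locally trivial).} Surjectivity of $(\bs,\bt)$ follows from Step~1: for any $(x_0,y_0)\in M\times M$, pick $g\in\bt_{x_0}^{-1}(y_0)$, whence $(\bs,\bt)(g)=(x_0,y_0)$. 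For the \Q-submersion property at $g\in\Gc$ with $\bs(g)=x_0$, $\bt(g)=y_0$, and given $(X,Y)\in T_{x_0}M\times T_{y_0}M$: first lift $X$ to some $V_0\in T_g\Gc$ using that $\bs$ is a \Q-submersion; set $Y_0:=T_g\bt(V_0)$; then use that $\bt_{x_0}$ is a \Q-submersion to lift $Y-Y_0$ to some $W\in T_g\Gc(x_0,-)\subseteq\Ker T_g\bs$; and take $V:=V_0+W\in T_g\Gc$, which satisfies $T_g\bs(V)=X$ and $T_g\bt(V)=Y_0+(Y-Y_0)=Y$.

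The main obstacle, in my view, is purely of a differential-geometric nature and concerns the \Q-manifold setting rather than the groupoid structure: one needs that the fiber of a \Q-submersion is an immersed \Q-submanifold whose tangent space coincides with the kernel of the tangent map of the submersion, and that a map into $\Gc$ landing in $\Gc(x,-)$ (e.g.\ the addition $V_0+W$ considered abstractly) interacts correctly with the ambient tangent structure. These facts should be available from the foliation-type results cited in the proof of Proposition~\ref{generalproperties}\eqref{generalproperties_item2} and the vector distribution framework in Appendix~\ref{AppB}; once they are in place, the argument above is essentially the linear-algebraic splitting $T_g\Gc=T_g\Gc(x_0,-)\oplus\text{(lift of }T_{x_0}M)$ familiar from the classical case.
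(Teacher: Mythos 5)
Your proof is correct and follows essentially the same route as the paper: the forward direction extracts the surjectivity of $T_g\bt_x$ from $T_g(\bs,\bt)$ restricted to $\Ker T_g\bs=T_g\Gc(x,-)$, the propagation to all base points uses right translations as \Q-diffeomorphisms, and the converse is the same splitting of $T_g\Gc$ into $\Ker T_g\bs$ plus a lift of $T_{x_0}M$ that the paper phrases via a direct complement $H$ of $T_g\Gc(x,-)$. The only quibble is a harmless bookkeeping slip in Step~1: with the conventions of Proposition~\ref{generalproperties}\eqref{generalproperties_item1} the map you want is $R_h\colon\Gc(y,-)\to\Gc(x,-)$ (so that $\bt_y=\bt_x\circ R_h$), not $R_{h^{-1}}$.
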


\begin{proof}  
	Since $\Gc\tto M$ is a groupoid, it is clear that the surjectivity of the mapping~$(\bt,\bs)$ is  equivalent to surjectivity of the groupoid orbit map~$\bt_x$. 
	In order to prove the equivalence of \Q-submersion properties of these maps, we note that they are related by the commutative diagram
	\begin{equation*}
		\xymatrix{
		\Gc(x,-)\times\Gc(x,-) \ar[r]^{\qquad\delta_x} \ar[dr]_{\bt_x\times\bt_x}& \Gc \ar[d]^{(\bt,\bs)} \\
		& M\times M
		}
	\end{equation*}
	where the division map $\delta_x\colon \Gc(x,-)\times\Gc(x,-) \to\Gc$, $\delta_x(h,k):=kh^{-1}$, is a surjective \Q-smooth map by the hypothesis that $\Gc\tto M$ is a transitive \Q-groupoid.  
	(See also Remark~\ref{OtherProperties}.)
	If $\bt_x$ is a \Q-submersion, then so is $\bt_x\times\bt_x$, hence the above commutative diagram shows that $(\bt,\bs)$ is a \Q-submersion as well, by Remark~\ref{rem:subm}\eqref{rem:subm_2}. 
	
	Conversely, assume the groupoid $ \Gc\tto M$  is locally trivial, i.e.,  $( \bs, \bt): \Gc\to M\times M$ is a surjective \Q-submersion. 
	Since $\{x\}\times M$ is an embedded submanifold of $M\times M$  and $( \bs, \bt)^{-1}(\{x\}\times M)= \Gc(x,-)$,   the \Q-submersion
$( \bs, \bt)\colon \Gc\to M\times M$ gives by restriction 
a \Q-submersion $\Gc(x,-)\to \{x\}\times M$. 
This last \Q-submersion can be identified wuth $\bt_x$, 
which is thus a \Q-submersion. 
In order to check that $\bt_y$ is a \Q-submersion for arbitrary $y\in M$, 
we use the fact that 
$ \Gc$ is transitive, 
hence there exists $h\in  \Gc$ such that  $\bs(h)=y$ and $\bt(h)=x$. 
By Proposition~\ref{generalproperties}\eqref{generalproperties_item1}, 
the mapping $R_h\colon\Gc(x,-)\to\Gc(y,-)$,  $k\mapsto kh$, is a \Q-diffeomorphism 
such that  
$\bt_y\circ R_h=\bt_x$.  
Since  $\bt_x$ is a \Q-submersion, 
it then follows 
that $ \bt_y:  \Gc(y,-)\to M$ is also a surjective \Q-submersion, 
which ends 
the proof.
\end{proof}

  As in  the classical case of the category of Lie groupoids, we have (cf. the comments preceding  \cite[Prop. 1.3.5]{Ma05}):

  \begin{theorem}\label{G-principalbundle} 
If  $ \Gc\tto M$ is a locally trivial  \Q-groupoid, then
   $ \bt_x\colon  \Gc(x,-)\to M$
    is  a principal \Q-bundle with its structure group $\Gc(x)$ for every $x\in M$. 
 \end{theorem}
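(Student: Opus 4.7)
The plan is to emulate the classical proof for Lie groupoids (cf.~\cite[Prop.~1.3.5]{Ma05}) while treating the \Q-smoothness carefully. First, I would set up the right action $\A\colon \Gc(x,-)\times\Gc(x)\to\Gc(x,-)$ by right translation: for $g\in\Gc(x,-)$ and $h\in\Gc(x)$ we have $\bs(g)=x=\bt(h)$, so $gh$ is defined and satisfies $\bs(gh)=\bs(h)=x$, hence $gh\in\Gc(x,-)$. The action is free (if $gh=g$, left multiplication by $g^{-1}$ gives $h=\1_x$), and its orbits coincide with the fibres of $\bt_x$: given $g_1,g_2\in\Gc(x,-)$ with $\bt(g_1)=\bt(g_2)$, the element $h:=g_1^{-1}g_2$ lies in $\Gc(x)$ and $g_1h=g_2$. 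The \Q-smoothness of $\A$ reduces to the \Q-smoothness of $\bm$, transported to the immersed \Q-submanifolds $\Gc(x,-)$ and $\Gc(x)$ by the universality property of immersed \Q-submanifolds, exactly as in the argument for $L_g$ and $R_g$ in the proof of Proposition~\ref{generalproperties}\eqref{generalproperties_item1}.

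Second, by Proposition~\ref{txSubmersion} the map $\bt_x\colon\Gc(x,-)\to M$ is a surjective \Q-submersion, so for every $y_0\in M$ there exist an open neighbourhood $U\subseteq M$ containing $y_0$ and a \Q-smooth section $\sigma\colon U\to\Gc(x,-)$ with $\bt_x\circ\sigma=\id_U$. Using such a $\sigma$, I would define
\[
\psi_U\colon\bt_x^{-1}(U)\to U\times\Gc(x),\quad g\mapsto\bigl(\bt(g),\,\sigma(\bt(g))^{-1}g\bigr),
\]
whose candidate inverse is $(y,h)\mapsto\sigma(y)h$. A direct computation using associativity shows $\psi_U(gh)=(\bt(g),\,\sigma(\bt(g))^{-1}g\cdot h)$, so $\psi_U$ is $\Gc(x)$-equivariant, and the commutativity of diagram~\eqref{Q-bundle_def_eq2} (with $\beta$ replaced by $\bt_x$) is immediate from the first coordinate of $\psi_U$. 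Running this construction over an open cover of $M$ by such neighbourhoods then produces the required local trivialisations.

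Third, I would verify that $\psi_U$ and $\psi_U^{-1}$ are \Q-smooth. \Q-smoothness of $\psi_U^{-1}$ is the composition of $\sigma$, the inclusion $U\times\Gc(x)\hookrightarrow\Gc\times\Gc$ into composable pairs, the multiplication $\bm$, and the corestriction back into $\Gc(x,-)$, the last step justified by the universality property of immersed \Q-submanifolds (Proposition~\ref{preliminaries}). For $\psi_U$, the analogous argument uses $\bi$, $\bt$, $\sigma$, $\bm$ and the corestriction into $\Gc(x)\subseteq\Gc$, which is an immersed \Q-submanifold by Proposition~\ref{generalproperties}\eqref{generalproperties_item2}.

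The main obstacle is not the set-theoretic structure, which is formally identical to the classical case, but rather the uniform handling of \Q-smoothness in the presence of immersed—but generally not embedded—\Q-submanifolds $\Gc(x,-)$ and $\Gc(x)$ inside $\Gc$. Every time one of the structural maps of $\Gc$ is composed with the inclusion of one of these \Q-submanifolds, the corestriction into the immersed \Q-submanifold needs to be justified by the universality property from \cite[Ch.~1, \S2, no.~2]{Br73}, exactly as in the proofs of Propositions~\ref{preliminaries} and~\ref{generalproperties}. A secondary point, of a more technical nature, is the existence of local \Q-smooth sections for the \Q-submersion $\bt_x$, which is the direct analog of the classical fact for surjective submersions and is built into the definition of \Q-submersion employed in \cite{Br73}.
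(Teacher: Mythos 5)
Your proposal is correct and follows essentially the same route as the paper: local \Q-smooth sections of the surjective \Q-submersion $\bt_x$ (via Proposition~\ref{txSubmersion}), the trivialisation $g\mapsto(\bt(g),\sigma(\bt(g))^{-1}g)$ with inverse $(y,h)\mapsto\sigma(y)h$, and the same equivariance computation. Your extra care in justifying each corestriction into the immersed \Q-submanifolds $\Gc(x,-)$ and $\Gc(x)$ by the universality property is consistent with (and slightly more explicit than) the paper's treatment.
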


  \begin{proof}  We consider the map 
$$\Psi\colon \Gc(x,-)/\Gc(x)\to M,\quad g\cdot\Gc(x)\mapsto  \bt(g).$$
It is easy to prove that  $\Psi$ is bijective since  it is injective by construction and  surjective  by Proposition \ref{txSubmersion}. 
 We must show that $ \bt_x: \Gc(x,-)\to M$ satisfies the conditions of Definition~\ref{Q-bundle_def}.

Consider the right group action 
$\Rc\colon \Gc(x,-)\times\Gc(x)\to\Gc(x,-)$, 
  $(h,g)\mapsto \mathcal{R}(h,g)=hg$. 
 As   the groupoid multiplication and the inversion are \Q-smooth, it follows that  this  action  is \Q-smooth and free. 
The transitivity implies that  for any $y\in M$, there exists $h\in  \Gc(x)$,  such that $ \bt_x(h)=y$.  
Consider  the map $r_h\colon\Gc(x)\to\Gc(x,-)$, 
$r_h(g):= hg=L_h(g)$,   
which is the restriction of $L_h$ to $ \Gc(x)$ and so
defines a \Q-diffeomorphism $\Gc(x)\to\Gc(x,y)$. 
 As  $\Gc(x,y)\subseteq\Gc(x,-)$ this implies that
 $r_h\colon\Gc(x)\to\Gc(x,-)$ is a \Q-immersion whose range is $ \Gc(x,y)= \bt_x^{-1}(y)$.  
 
 For any $y\in M$,  since $ \bt_x\colon\Gc(x,-)\to M$ is a surjective submersion, there exist an open subset $V\subseteq M$ with $y\in V$ and  a smooth cross-section $\sigma\colon V\to   \bt_x^{-1}(V)$.  
 We define the map $\Phi\colon V\times  \Gc(x)\to  \bt_x^{-1}(V)$,  $\Phi(h,g)=r_{\sigma(z)}(g)=\sigma(z)g$. 
 As   the multiplication in $ \Gc$ is \Q-smooth and each  $r_{\sigma(z)}$ is a diffeomorphism on the fibre over $z$, this map is smooth and bijective.  
 Thus $\Phi$ is a \Q-diffeomorphism. 
 Note that for any $h\in  \Gc(x,z)$ we have  $(\sigma(z))^{-1}h\in \Gc(x)$. 
 Therefore the map $\Phi^{-1}\colon \bt_x^{-1}(V) \to V\times  \Gc(x)$ can be written
 $$\Phi^{-1}(h)=( \bt(h), \phi(h)) $$
 with $\phi(h)=(\sigma( \bt(h))^{-1}h$.  
 According to Remark~\ref{local} 
 or 
 Definition~\ref{Q-bundle_def},  
 the proof will be complete if we show that
 $$\phi(\mathcal{R}(h,g))=\mathcal{R}(\phi(h),g)$$
 But we have
 $$\phi(\mathcal{R}(h,g))=\phi(hg)=(\sigma( \bt(g^{-1}h))^{-1}hg=(\sigma( \bt(h))^{-1}hg=\phi(h)g=\mathcal{R}(\phi(h),g)$$
 which ends the proof.
 \end{proof}
 
 \begin{remark}
 \label{Qcross}
 \normalfont
 Let $S$ be a \Q-manifold with a \Q-atlas $\pi\colon M\to S$. 
 If $N$ is a smooth manifold and $f\colon S\to N$ is a \Q-submersion, 
 then $f\circ\pi\colon M\to N$ is a smooth submersion, hence for every $s_0\in S_0$, $m_0\in \pi^{-1}(s_0)\subseteq M$ and $n_0\in N$ there exist an open subset $N_0\subseteq N$ and a smooth mapping $\sigma_0\colon N_0\to M$ satisfying $n_0\in N_0$, $\sigma_0(n_0)=m_0$, and $(f\circ\pi)\circ\sigma_0=\id_{N_0}$. 
 Denoting $\sigma:=\pi\circ\sigma_0\colon N_0\to S$, we then obtain $f\circ\sigma=\id_{N_0}$ and $\sigma(n_0)=\pi(m_0)=s_0$. 
 If these conditions are satisfied, we call $\sigma$ a \emph{\Q-smooth local cross-section} of $f$. 
 \end{remark}

\begin{remark}
\label{decomposing}
\normalfont 
Let $\Gc\tto M$ be a locally trivial \Q-groupoid and fix any $x_0\in M$. 
By Proposition~\ref{txSubmersion}, the mapping $ \bt_{x_0}\colon \Gc(x_0,-)\to M$ is a surjective \Q-submersion. 
Therefore, by Remark~\ref{Qcross}, one can find an open cover $\{U_i\}_{i\in I}$ of $M$ 
and for every $i\in I$ a \Q-smooth map $\sigma_i\colon U_i\to \Gc$ with $\bt_{x_0}\circ\sigma_i=\id_{U_i}$ and $\sigma_i(x_0)=\1_{x_0}\in \Gc(x_0,-)$. 
Now let us consider the \Q-group $G_0:=\Gc(x_0)$. 
For arbitrary $i\in I$, it is easily checked that the restricted groupoid $\Gc(U_i,U_i):=\bt^{-1}(U_i)\cap\bs^{-1}(U_i)\tto U_i$ is a \Q-groupoid. 
Moreover, the mapping 
$$\Sigma_i\colon U_i\times G_0\times U_i\to \Gc(U_i,U_i),\quad 
(y,g,x)\mapsto \sigma_i(y)g\sigma_i(x)^{-1}$$
is a base-preserving isomorphism of \Q-groupoids just as in \cite[\S 1.3, Eq. (2)]{Ma05}. 
\end{remark}

 \section{The gauge groupoid of a principal \Q-bundle}
 \label{Sect8}
 
 In this section we obtain the main result of the present paper 
 on integrating  general transitive Lie algebroids on second countable, smooth manifolds, 
 to locally trivial \Q-groupoids (Corollary~\ref{integration}).  
 The key to this result is the gauge groupoid of a principal \Q-bundle, 
 which allows us to  show that the classical relation between smooth principal bundles and transitive Lie groupoids carries over to the setting of \Q-manifolds 
 (Theorems \ref{GaugeQGroupoid} and \ref{isoloctricQgroupoid}). 
 More specifically, we show that the gauge groupoid of a principal \Q-bundle admits a \Q-atlas whose domain of definition is a smooth principal bundle. 
 This additional piece of information plays a crucial role in the proof of Theorem~\ref{Ehr_th}, which gives the description of the Lie algebroid associated to a gauge groupoid.

 Let $G$ be a \Q-group  and  $\beta:P\to M$ be a principal \Q-bundle with structure group~$G$. 
 We  will  define the gauge \Q-groupoid of $P$, following the steps of  the classical construction of the gauge groupoid of  a principal bundle. 
  
  From the right action 
  $$P\times G\to P,\qquad (p,g)\mapsto 
  pg$$  
 we get a right diagonal  action of $G$ on $P\times P$
 $$(P\times P)\times G,\quad  
 (p,q,g)\mapsto 
 (pg,qg)
 $$ 
with its
corresponding quotient map 
$$P\times P\to (P\times P)/G,\quad (p,q)\mapsto[p,q]:=(p,q)G=\{(pg,qg)\mid g\in G\}.$$
The quotient set
$$ \Gc:=(P\times P)/G$$
is endowed with the \emph{gauge groupoid} structure $ \Gc\tto M\equiv P/G$ for which the above quotient map is a groupoid morphism defined on the pair groupoid $P\times P\tto P$. 
More specifically, the structure maps of  $ \Gc\tto M$ are given 
 in the following way:
\begin{itemize}
	\item 
	 the source/target maps $ \bs,\bt\colon\Gc\to M$, 
	 $\bs([p,q]):=\beta(q)$, $\bt([p,q]):=\beta(p)$; 
\item the multiplication map $\bm\colon\Gc^{(2)}\to\Gc$, $\bm([p, q], [q,r])):=[p, q]\cdot [q,r]:= [p, r]$;  
\item  
the unit map $\1\colon M\to \Gc$, $\1_{\beta(p)}:=[p,p]$
\item the inverse $ \bi$ is defined by $[p, q]^{-1}$ of the class $[p,q]$   is the class $[q, p]$. 
 \end{itemize}
By the way, we have $ \Gc(x,-)=\{ [p,q]\in  \Gc: \beta(p)=x\}$ for every $x\in M$. 

In Theorem~\ref{GaugeQGroupoid} below we use the above notation as well as notation introduced in Remark~\ref{principalbundleQchart}. 
{In order to motivate the definition of the equivalence relation~\eqref{simTildeMathG} in the proof of Theorem~\ref{GaugeQGroupoid}, 
	we make the following remark on the special case of gauge groupoids associated to smooth principal bundles. 
	\begin{remark}
		\label{GaugeQGroupoid_smooth}
		\normalfont 
Let $\Gc\tto M$ be a locally trivial Lie groupoid. 
Select any point $x_0\in M$ and consider the Lie group $G:=\Gc(x_0)$. 
Assume that $M=\bigcup\limits_{i\in I}U_i$ is an open covering 
and for every $i\in I$ we have a smooth cross-section $\sigma_i\colon U_i\to \Gc(-,x_0)$ 
of the surjective submersion $\bt_{x_0}\colon\Gc(-,x_0)\to M$, 
hence $\bt\circ\sigma_i=\id_{U_i}$ and $\bs(\sigma_i(x))=x_0$ for every $x\in U_i$. 
Then the mapping 
\begin{equation}
	\label{GaugeQGroupoid_eq1}
	\tau_{ji}\colon U_i\times G\times U_j\to(\bt\times\bs)^{-1}(U_i\times U_j),\quad  (x,g,y)\mapsto\sigma_i(x)g\sigma_j(y)^{-1}, 
\end{equation}
is a diffeomorphism. 
(Compare the section atlas described after \cite[Prop. 1.3.3]{Ma05}.)
We denote its inverse by $\theta_{ji}:=\tau_{ji}^{-1}\colon (\bt\times\bs)^{-1}(U_i\times U_j)\to U_i\times G\times U_j$, $\theta_{ji}(h)=:(\bt(h),\varphi_{ij}(h),\bs(h))$,  
hence 
\begin{equation*} 
	(\forall h\in (\bt\times\bs)^{-1}(U_i\times U_j))\quad h=\sigma_i(\bt(h))\varphi_{ij}(h)\sigma_j(\bs(h))^{-1}.
\end{equation*} 
Therefore, if $h\in (\bt\times\bs)^{-1}((U_{i_1}\times U_{j_1})\cap(U_{i_2}\times U_{j_2})))$, 
then 
\begin{equation}
\label{GaugeQGroupoid_eq2}
\sigma_{i_2}(\bt(h))\varphi_{i_2j_2}(h)\sigma_{j_2}(\bs(h))^{-1}
=h=
\sigma_{i_1}(\bt(h))\varphi_{i_1j_1}(h)\sigma_{j_1}(\bs(h))^{-1}.
\end{equation}
We now apply Remark~\ref{bundle_smooth} for the principal bundle $\beta=\bt_{x_0}\colon P=\Gc(-,x_0)\to  N=M$. 
Let $\psi_{ji}\colon U_i\cap U_j\to G=\Gc(x_0)$ be the functions satisfying~\eqref{bundle_smooth_eq2}, 
that is, $\sigma_i(x)=\sigma_j(x)\psi_{ji}(x)$ for all $x\in U_i\cap U_j$. 
Then, by~\eqref{GaugeQGroupoid_eq2}, we obtain 
\begin{eqnarray*}
\varphi_{i_2j_2}(h)
=\psi_{i_2 i_1}(\bt(h))\varphi_{i_1j_1}(h)\psi_{j_2 j_1}(\bs(h)).
\end{eqnarray*}
Consequently, if $i_1,i_2,j_1,j_2\in I$, 
then we can describe the points where the maps $\tau_{j_1 i_1}$ and $\tau_{j_2 i_2}$ 
defined in \eqref{GaugeQGroupoid_eq1}
take the same value. 
Specifically,  
if $(x_1,g_1,y_1)\in U_{i_1}\times G\times U_{j_1}$, 
and $(x_2,g_2,y_2)\in U_{i_2}\times G\times U_{j_2}$, then we have 
\begin{align*}
\tau_{j_1 i_1} & (x_1,g_1,y_1)=\tau_{j_2 i_2}(x_2,g_2,y_2) \\
&\iff x_1=x_2=:x,\ y_1=y_2=:y,\  \sigma_{i_1}(x)g_1\sigma_{j_1}(y)^{-1}=\sigma_{i_2}(x)g_2\sigma_{j_2}(y)^{-1} \\
&\iff  x_1=x_2=:x,\ y_1=y_2=:y,\ 
g_2=\psi_{i_2 i_1}(x)g_1\psi_{j_2 j_1}(y)^{-1}. 
\end{align*}
This last condition should be compared with \eqref{simTildeMathG} in the proof of Theorem~\ref{GaugeQGroupoid}. 
	\end{remark}
}
  
\begin{theorem}\label{GaugeQGroupoid}  
	If $G$ is a \Q-group and $\beta\colon P\to M$ is a principal \Q-bundle with structure group~$G$, then 
	the gauge groupoid $ \Gc=(P\times P)/G\tto M$ is a locally trivial  \Q-groupoid. 
	There exists a locally trivial Lie groupoid $ \widetilde{\Gc}\tto \widetilde{M}$ 
	and a \Q-groupoid morphism  $\pi_\Gc:\widetilde{ \Gc}\to  \Gc$ 
	which is a \Q-atlas of $ \Gc$.
\end{theorem}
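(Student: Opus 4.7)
The plan is to combine the \Q-atlas $\pi_P\colon\widetilde{P}\to P$ from Remark~\ref{principalbundleQchart} with the classical gauge-groupoid construction for smooth principal bundles. Setting $\widetilde{M}:=\widetilde{N}$, the smooth principal bundle $\widetilde{\beta}\colon\widetilde{P}\to\widetilde{M}$ has as its classical gauge Lie groupoid $\widetilde{\Gc}:=(\widetilde{P}\times\widetilde{P})/\widetilde{G}\tto\widetilde{M}$, which is locally trivial in the classical sense (cf.~\cite[\S 1.3]{Ma05}). Since $\pi_P\times\pi_P$ intertwines the diagonal $\widetilde{G}$-action on $\widetilde{P}\times\widetilde{P}$ with the diagonal $G$-action on $P\times P$ via $\pi_G$, it descends to a map $\pi_\Gc\colon\widetilde{\Gc}\to\Gc$; a direct verification on source, target, multiplication, unit, and inversion shows that $\pi_\Gc$ is a groupoid morphism over $\pi_N\colon\widetilde{M}\to M$.

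The central technical step is to verify that $\pi_\Gc$ is a \Q-atlas, i.e.\ that it satisfies the two conditions of Definition~\ref{Q_def}. For the existence of transversal local diffeomorphisms, I start from $\widetilde{\gamma}_j=[\widetilde{p}_j,\widetilde{p}_j']\in\widetilde{\Gc}$ ($j=1,2$) with $\pi_\Gc(\widetilde{\gamma}_1)=\pi_\Gc(\widetilde{\gamma}_2)$; then there exists $g\in G$ with $(\pi_P(\widetilde{p}_2),\pi_P(\widetilde{p}_2'))=(\pi_P(\widetilde{p}_1)g,\pi_P(\widetilde{p}_1')g)$, and on lifting $g$ to $\widetilde{g}\in\widetilde{G}$ and replacing $(\widetilde{p}_1,\widetilde{p}_1')$ by the equivalent $(\widetilde{p}_1\widetilde{g},\widetilde{p}_1'\widetilde{g})$, the two representatives share the same image under $\pi_P\times\pi_P$. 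Using the local trivializations of the Lie groupoid $\widetilde{\Gc}$ (the section atlas described after \cite[Prop. 1.3.3]{Ma05}), I obtain smooth charts of the form $V_i\times\widetilde{G}\times V_j$ around $\widetilde{\gamma}_1$ and $\widetilde{\gamma}_2$ in which $\pi_\Gc$ factors as $\pi_N|_{V_i}\times\pi_G\times\pi_N|_{V_j}$; since $\pi_N$ is étale, the \Q-atlas property of $\pi_G$ produces a transversal local diffeomorphism in the middle factor, yielding the required local diffeomorphism on $\widetilde{\Gc}$.

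For the separation property, given smooth maps $f,h\colon T\to\widetilde{\Gc}$ with $\pi_\Gc\circ f=\pi_\Gc\circ h$ and $f(t_0)=h(t_0)$, I lift $f$ and $h$ near $t_0$ via a common smooth local section of the submersion $\widetilde{q}\colon\widetilde{P}\times\widetilde{P}\to\widetilde{\Gc}$ to obtain $\widetilde{f},\widetilde{h}\colon T_0\to\widetilde{P}\times\widetilde{P}$ with $\widetilde{f}(t_0)=\widetilde{h}(t_0)$. Freeness of the $G$-action on $P$ together with the local trivializations of $\beta$ defines a \Q-smooth divisor $g\colon T_0\to G$ with $g(t_0)=\1$ and $(\pi_P\times\pi_P)(\widetilde{h})=(\pi_P\times\pi_P)(\widetilde{f})\cdot(g,g)$; the \Q-atlas $\pi_G$ then admits a smooth local lift of $g$ to $\widetilde{g}\colon T_0'\to\widetilde{G}$ with $\widetilde{g}(t_0)=\1$ (after adjustment by an element of the pseudo-discrete central subgroup $\Lambda\subseteq\widetilde{G}$ if needed). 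Then $\widetilde{f}\cdot\widetilde{g}$ and $\widetilde{h}$ share the same image under $\pi_P\times\pi_P$ and agree at $t_0$, so the separation property of the \Q-atlas $\pi_P\times\pi_P$ forces $\widetilde{f}\cdot\widetilde{g}=\widetilde{h}$ near $t_0$, whence $f=h$ near $t_0$.

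With $\pi_\Gc$ shown to be a \Q-atlas, the \Q-groupoid axioms for $\Gc\tto M$ (Definition~\ref{BQG}) descend from the Lie groupoid axioms for $\widetilde{\Gc}\tto\widetilde{M}$ through the groupoid morphism $\pi_\Gc$ together with the fact that $\pi_N$ is étale. For local triviality, I invoke Proposition~\ref{txSubmersion}: fixing $x_0\in M$ and $p_0\in\beta^{-1}(x_0)$, the map $p\mapsto[p,p_0]$ is a \Q-diffeomorphism $P\to\Gc(x_0,-)$ (bijective by freeness of the $G$-action and a \Q-diffeomorphism via the local trivializations of $\beta$), under which $\bt_{x_0}$ corresponds to the surjective \Q-submersion $\beta\colon P\to M$. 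The main obstacle is the careful identification of local trivializations of the \Q-groupoid $\Gc$ obtained from the transition cocycles of the \Q-principal bundle $\beta$, together with their compatibility with the map $\pi_\Gc$, which involves tracking the interplay between these cocycles, the quotient $\pi_G$, and the $\widetilde{G}$-action.
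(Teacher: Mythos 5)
Your proposal is correct and follows essentially the same route as the paper: both pass to the smooth gauge groupoid $\widetilde{\Gc}=(\widetilde{P}\times\widetilde{P})/\widetilde{G}$ of the lifted principal bundle, descend $\pi_P\times\pi_P$ to $\pi_\Gc$, verify the two \Q-chart axioms via local smooth lifts of the transition data together with the pseudo-discreteness of $\Lambda$, and then deduce the \Q-groupoid axioms and local triviality. The only substantive difference is presentational: the paper computes the fibres of $\pi_\Gc$ explicitly as an equivalence relation on $\bigsqcup_{i,j}U_i\times\widetilde{G}\times U_j$ governed by the cocycle expression $\psi_{i_2i_1}(x)\,\cdot\,\psi_{j_2j_1}(y)^{-1}$ --- precisely the ``interplay between cocycles, $\pi_G$, and the $\widetilde{G}$-action'' that you flag but leave as a sketch in the transversal-diffeomorphism step --- and it establishes local triviality by checking $(\bt,\bs)$ directly rather than via Proposition~\ref{txSubmersion}.
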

 
\begin{proof} 
	The method of proof is to build a \Q-atlas $\pi_{ \Gc}:\widetilde{ \Gc}\to  \Gc$ using ideas from the proof of Proposition~\ref{bundle} and $\widetilde{ \Gc}$ can be provided with a Lie groupoid structure over $M$ such that $\pi_\Gc$ becomes a \Q-Lie groupoid morphism over $\id_M$.
   
   To this end we consider  an open covering $M=\bigcup\limits_{i\in I} U_i$, a family of transition functions $\psi_{ij}\colon U_i\cap U_j\to G$, and a Lie group $\widetilde{G}$  with a pseudo-discrete subgroup $\Lambda\subseteq\widetilde{G}$ such that $G=\widetilde{G}/\Lambda$, with the quotient homomorphism $\pi_G\colon\widetilde{G}\to \widetilde{G}/\Lambda$ 
   (cf. Remark~\ref{rem:Q-gr}), 
   and  their corresponding \Q-atlas $\pi_P:\widetilde{P}\to P$. 

We then consider the disjoint unions of smooth manifolds 
$$\widetilde{\Gc}:=\bigsqcup_{(i,j)\in I^2}(\{i\}\times U_i)\times \widetilde{G}\times (\{j\}\times U_j)
\text{ and }
\widetilde{M}:=\bigsqcup_{i\in I}(\{i\}\times U_i)
$$
with the maps 
\begin{align*}
&\widetilde{\bt},\widetilde{\bs}\colon\widetilde{\Gc}\to \widetilde{M},\quad \widetilde{\bt}(i,x,\widetilde{g},j,y):=(i,x),\ \widetilde{\bs}(i,x,\widetilde{g},j,y):=(j,y),\\
&\widetilde{\bi}\colon\widetilde{\Gc}\to\widetilde{\Gc},\quad 
(i,x,\widetilde{g},j,y)\mapsto (j,y,\widetilde{g}^{-1},i,x), \\
&\1\colon \widetilde{M}\to\widetilde{\Gc},\quad \1(i,x):=(i,x,\1_{\widetilde{G}},i,x). 
\end{align*}
Then $\widetilde{\Gc}$ is a Lie groupoid with its multiplication $\widetilde{\bm}\colon\widetilde{\Gc}^{(2)}\to\widetilde{\Gc}$ given by 
$$(i_1,x_1,\widetilde{g}_1,j_1,y_1)\cdot(i_2,x_2,\widetilde{g}_2,j_2,y_2)
=(i_1,x_1,\widetilde{g}_1\widetilde{g}_2,j_2,y_2)$$
if $(j_1,y_1)=(i_2,x_2)$. 

Recalling the smooth principal bundle with its structure group $\widetilde{G}$ 
$$\widetilde{\beta}\colon \widetilde{P}:=\bigsqcup_{i\in I}(\{i\}\times U_i\times \widetilde{G})\to \widetilde{M},\quad 
(i,x,\widetilde{g})\mapsto(i,x)$$
from Remark~\ref{principalbundleQchart}, it is easily checked that  
the mapping 
\begin{equation*}
\widetilde{\chi}\colon (\widetilde{P}\times\widetilde{P})/\widetilde{G}\to \widetilde{\Gc},\quad 
[(i,x,\widetilde{g}),(j,y,\widetilde{f})]\mapsto (i,x,\widetilde{g}\widetilde{f}^{-1},j,y)
\end{equation*}
is an isomorphism of Lie groupoids with its inverse mapping 
\begin{equation*}
\widetilde{\chi}^{-1}\colon	\widetilde{\Gc}\to (\widetilde{P}\times\widetilde{P})/\widetilde{G},\quad 
(i,x,\widetilde{g},j,y)\mapsto	[(i,x,\widetilde{g}),(j,y,\1_{\widetilde{G}})]
\end{equation*}
where $\1_{\widetilde{G}}\in\widetilde{G}$ is the unit element of the Lie group~$\widetilde{G}$. 

As noted in Remark~\ref{principalbundleQchart}, the mapping $\pi_P\colon\widetilde{P}\to P$ is a surjective homomorphism of principal \Q-bundles over the \'etale smooth mapping 
$$\pi_M\colon\widetilde{M}\to M,\quad \pi_M(i,x):=x$$ 
with respect to the \Q-group morphism $\pi_G\colon\widetilde{G}\to G$, 
hence we obtain the surjective  morphism of groupoids\footnote{In fact, the construction of the gauge groupoid of a principal bundle is a functor between suitable categories of principal bundles and  transitive groupoids, respectively. 
See \cite[Ex. 1.2.9]{Ma05} for that fact in the setting of Lie groupoids.}
$$\widetilde{\pi}_\Gc\colon (\widetilde{P}\times\widetilde{P})/\widetilde{G}\to
(P\times P)/G, 
\quad 
[\widetilde{p},\widetilde{q}]\mapsto [\pi_P(\widetilde{p}),\pi_P(\widetilde{q})].$$
We then obtain the commutative diagram 
\begin{equation}
\label{GaugeQGroupoid_proof_eq0}  
\xymatrix{(\widetilde{P}\times\widetilde{P})/\widetilde{G} \ar[rr]^{\widetilde{\chi}} \ar[dr]_{\widetilde{\pi}_\Gc}  & & \widetilde{\Gc} \ar[dl]^{\pi_\Gc}\\
	& (P\times P)/G & 
}
\end{equation}
where $\pi_\Gc:=\widetilde{\pi}_\Gc\circ\widetilde{\chi}^{-1}$. 

We now define an equivalence relation on $\widetilde{\Gc}$ by 
\begin{align}
(i_1,x_1,\widetilde{g}_1, & j_1,y_1) 
 \sim_{\Gc}(i_2,x_2,,\widetilde{g}_1,j_2,y_2) \nonumber \\
\label{simTildeMathG}
& \iff 
\begin{cases}
(x_1, y_1)=(x_2,y_2)=:(x,y)\in( U_{i_1}\times U_{j_1})\cap (U_{i_2}\times U_{j_2}) & \\
\pi_G(\widetilde{g}_2)=\psi_{i_2i_1}(x)\pi_G(\widetilde{g}_1)\psi_{j_2j_1}(y)^{-1} &
\end{cases}
\end{align}
and we claim that the fibres of $\pi_\Gc$ are exactly the equivalence classes with respect to~$\sim_\Gc$. 

To this end, we first compute the fibres of $\widetilde{\pi}_\Gc$. 
Specifically, since the group morphism 
$\pi_G\colon\widetilde{G}\to\widetilde{G}/\Lambda=G$ is surjective, we have 
\begin{align*}
	\widetilde{\pi}_\Gc([(i_1,x_1,\widetilde{g}_1) & ,(j_1,y_1,\widetilde{f}_1)]) 
	=\widetilde{\pi}_\Gc([(i_2,x_2,\widetilde{g}_2) ,(j_2,y_2,\widetilde{f}_2)])
	\\
	 \iff & [\pi_P(i_1,x_1,\widetilde{g}_1)  ,\pi_P(j_1,y_1,\widetilde{f}_1)]
	=[\pi_P(i_2,x_2,\widetilde{g}_2),\pi_P(j_2,y_2,\widetilde{f}_2)]
	\\
	\iff & (\exists \widetilde{g}\in\widetilde{G}) 
	\begin{cases}
		 \pi_P(i_2,x_2,\widetilde{g}_2)=\pi_P(i_1,x_1,\widetilde{g}_1)\pi_G(\widetilde{g}) & \\
		 \pi_P(j_2,y_2,\widetilde{f}_2)=\pi_P(j_1,y_1,\widetilde{f}_1)\pi_G(\widetilde{g})
		 &
	\end{cases} 
\\
\iff & (\exists \widetilde{g}\in\widetilde{G}) 
    \begin{cases}
	\pi_P(i_2,x_2,\widetilde{g}_2)=\pi_P(i_1,x_1,\widetilde{g}_1\widetilde{g}) & \\
	\pi_P(j_2,y_2,\widetilde{f}_2)=\pi_P(j_1,y_1,\widetilde{f}_1\widetilde{g}) & 
	\end{cases} 
\\
\mathop{\iff}\limits^{\eqref{ildePsim}} & (\exists \widetilde{g}\in\widetilde{G}) 
\begin{cases}
	x_2=x_1=:x\in U_{i_2}\cap U_{i_1}, & 
	\pi_G(\widetilde{g}_2)=\psi_{i_2 i_1}(x)\pi_G(\widetilde{g}_1\widetilde{g}) \\
	y_2=y_1=:y\in U_{j_2}\cap U_{j_1}, &
	\pi_G(\widetilde{f}_2)=\psi_{j_2 j_1}(y)\pi_G(\widetilde{f}_1\widetilde{g}) 
\end{cases} 
\\
\iff & 
\begin{cases}
	x_2=x_1=:x\in U_{i_2}\cap U_{i_1}, \ 	y_2=y_1=:y\in U_{j_2}\cap U_{j_1} & \\
	\pi_G(\widetilde{g}_2\widetilde{f}_2^{-1})=\psi_{i_2 i_1}(x)\pi_G(\widetilde{g}_1\widetilde{f}_1^{-1})\psi_{j_2 j_1}(y)^{-1} & 
\end{cases}
\end{align*}
Now, since 
$\widetilde{\chi}([(i_r,x_r,\widetilde{g}_r) ,(j_r,y_r,\widetilde{f}_r)])
=(i_r,x_r,\widetilde{g}_r\widetilde{f}_r^{-1} ,j_r,y_r)$ for $r=1,2$, 
we obtain by \eqref{simTildeMathG}
\begin{align*}
	& \widetilde{\pi}_\Gc([(i_1,x_1,\widetilde{g}_1) ,(j_1,y_1,\widetilde{f}_1)]) 
=\widetilde{\pi}_\Gc([(i_2,x_2,\widetilde{g}_2) ,(j_2,y_2,\widetilde{f}_2)])
\\
\iff 
& \widetilde{\chi}([(i_1,x_1,\widetilde{g}_1)  ,(j_1,y_1,\widetilde{f}_1)])
\sim_\Gc \widetilde{\chi}([(i_2,x_2,\widetilde{g}_2) ,(j_2,y_2,\widetilde{f}_2)]).
\end{align*}
Since $\widetilde{\chi}$ is bijective, this proves our claim that the fibres of $\pi_\Gc=\widetilde{\pi}_\Gc\circ \widetilde{\chi}^{-1}$ are the equivalence classes defined by~\eqref{simTildeMathG}. 

{\bf  Assertion 1}. 
\emph{The mapping 
$\pi_\Gc\colon \widetilde{\Gc}\to (P\times P)/G=\Gc$ is a \Q-atlas.}
    
   \begin{proof}[Proof of Assertion 1]
We check the conditions \eqref{Q_ex_a}--\eqref{Q_ex_b} in the definition of a \Q-chart given in 
Definition~\ref{Q_def}. 

\eqref{Q_ex_a}
To prove the existence of local transverse diffeomorphisms, 
assume 
$$\pi_\Gc(i_1,x_1,\widetilde{g}_{10}, j_1,y_1) =\pi_\Gc(i_2,x_2,,\widetilde{g}_{20},j_2,y_2)$$ 
that is, 
$(x_1, y_1)=(x_2,y_2)=:(x_0,y_0)\in( U_{i_1}\times U_{j_1})\cap (U_{i_2}\times U_{j_2})$ and 
\begin{equation}
	\label{GaugeQGroupoid_proof_eq1}  
	\pi_G(\widetilde{g}_{20})=\psi_{i_2i_1}(x_0)\pi_G(\widetilde{g}_{10})\psi_{j_2j_1}(y_0)^{-1}  
\end{equation}
according to the above description of the fibres of~$\pi_\Gc$.
Here $\psi_{i_2i_1}\colon U_{i_2}\cap U_{i_1}\to G$ and $\psi_{j_2 j_1}\colon U_{j_2}\cap U_{j_1}\to G$ are \Q-smooth mappings hence, as in the proof of Proposition~\ref{bundle}, 
there exist open subsets $U_{x_0}\subseteq U_{i_2}\cap U_{i_1}$ and $U_{y_0}\subseteq U_{j_2}\cap U_{j_1}$ with $x_0\in U_{x_0}$ and $y_0\in U_{y_0}$, as well as smooth mappings
 $\widetilde{\psi}_{i_2i_1}\colon U_{x_0}\to \widetilde{G}$ and $\widetilde{\psi}_{j_2 j_1}\colon U_{y_0}\to \widetilde{G}$  with 
 \begin{equation}
 	\label{GaugeQGroupoid_proof_eq2} 
 \pi_G\circ \widetilde{\psi}_{i_2i_1}=\psi_{i_2 i_1}\vert_{U_{x_0}}
\text{ and }\pi_G\circ \widetilde{\psi}_{j_2j_1}=\psi_{j_2 j_1}\vert_{U_{y_0}}.
 \end{equation}
$\pi_G\circ \widetilde{\psi}_{i_2i_1}=\psi_{i_2 i_1}\vert_{U_{x_0}}$ 
and $\pi_G\circ \widetilde{\psi}_{j_2j_1}=\psi_{j_2 j_1}\vert_{U_{y_0}}$. 
Then \eqref{GaugeQGroupoid_proof_eq1} implies 
$\pi_G(\widetilde{g}_{20})=\pi_G(\widetilde{\psi}_{i_2i_1}(x_0)\widetilde{g}_{10}\widetilde{\psi}_{j_2j_1}(y_0)^{-1}) $ hence, since $\Ker\pi_G=\Lambda$, it follows that there exists $\lambda_0\in\Lambda$ satisfying
\begin{equation}
\label{GaugeQGroupoid_proof_eq3}  
\widetilde{g}_{20}=\lambda_0\widetilde{\psi}_{i_2i_1}(x_0)\widetilde{g}_{10}\widetilde{\psi}_{j_2j_1}(y_0)^{-1}.
\end{equation}
We now define 
\begin{align*}
	& h\colon  U_{x_0}\times\{i_1\}\times\widetilde{G}\times U_{y_0}\times\{j_1\}
\to U_{x_0}\times \{i_2\}\times\widetilde{G}\times U_{y_0}\times\{j_2\}, 
\\   
& h(x,i_1,\widetilde{g}_1,y,j_1):=  h(x,i_2,\lambda_0\widetilde{\psi}_{i_2i_1}(x)\widetilde{g}_1\widetilde{\psi}_{j_2j_1}(y)^{-1},y,j_2).
\end{align*}
It follows by \eqref{GaugeQGroupoid_proof_eq3} that $h(i_1,x_1,\widetilde{g}_{10}, j_1,y_1) =(i_2,x_2,,\widetilde{g}_{20},j_2,y_2)$ 
and it is clear that $h$ is a diffeomorphism 
with its inverse 
\begin{align*}
	& h^{-1}\colon  
	U_{x_0}\times \{i_2\}\times\widetilde{G}\times U_{y_0}\times\{j_2\} 
	\to U_{x_0}\times\{i_1\}\times\widetilde{G}\times U_{y_0}\times\{j_1\}, 
	\\   
	& h^{-1}(x,i_2,\widetilde{g}_2,y,j_2):=  h^{-1}(x,i_1,\lambda_0^{-1}\widetilde{\psi}_{i_2i_1}(x)^{-1}\widetilde{g}_1\widetilde{\psi}_{j_2j_1}(y),y,j_1).
\end{align*}
(Recall that $\lambda_0\in\Lambda$ and $\Lambda$ is contained in the center of $\widetilde{G}$.)
Moreover, by \eqref{simTildeMathG}, we have $\pi_\Gc\circ h=\pi_\Gc\vert_{U_{x_0}\times\{i_1\}\times\widetilde{G}\times U_{y_0}\times\{j_1\}}$, 
hence $h$ is a transverse diffeomorphism with respect to $\pi_\Gc\colon\widetilde{\Gc}\to\Gc$. 

\eqref{Q_ex_b} 
We now prove that if $T$ is a smooth manifold and $\gamma_1,\gamma_2\colon T\to \widetilde{\Gc}$ are smooth mappings satisfying $\pi_\Gc\circ\gamma_1=\pi_\Gc\circ\gamma_2$, 
and   if $t_0\in T$ has  the property $\gamma_1(t_0)=\gamma_2(t_0)$, 
then $t_0$ has an open neighborhood $T_0\subseteq T$ with $\gamma_1(t)=\gamma_2(t)$ for every $t\in T_0$.   
To this end, we may assume without loss of generality that the smooth manifold $T_0$ is connected and then there exist $i_1,i_2,j_1,j_2\in I$ for which we may write 
$$\gamma_r(t)=(x(t),i_r,\widetilde{g}_r(t),y(t),j_r)\in U_{i_r}\times\{i_r\}\times\widetilde{G}\times U_{j_r}\times\{j_r
\}\subseteq \widetilde{\Gc}$$ 
for $t\in T$ and $r=1,2$. 
Since $\pi_\Gc(\gamma_1(t))=\pi_\Gc(\gamma_2(t))$ for every $t\in T$, it follows 
by~\eqref{simTildeMathG} that for every $t\in T$ we have 
$$
\begin{cases}
	(x_1(t), y_1(t))=(x_2(t),y_2(t))=:(x(t),y(t))\in( U_{i_1}\times U_{j_1})\cap (U_{i_2}\times U_{j_2}) & \\
	\pi_G(\widetilde{g}_2(t))
	=\psi_{i_2i_1}(x(t))\pi_G(\widetilde{g}_1(t))\psi_{j_2j_1}(y(t))^{-1}. &
\end{cases}$$
As in the proof of (a) above, we now select smooth mappings $\widetilde{\psi}_{i_2i_1}\colon U_{x_0}\to \widetilde{G}$ and $\widetilde{\psi}_{j_2 j_1}\colon U_{y_0}\to \widetilde{G}$ 
satisfying~\eqref{GaugeQGroupoid_proof_eq2} for $x_0:=x(t_0)$ and $y_0:=y(t_0)$. 
Since the mappings $\gamma_1,\gamma_2\colon T\to\widetilde{G}$ are continuous, 
their components $x\colon T\to U_{i_1}\times U_{j_1}$ and $y\colon T\to U_{i_2}\times U_{j_2}$ are also continuous, hence there exists a connected open neighborhood $T_0$ of $t_0\in T$ satisfying $x(T_0)\subseteq U_{x_0}$ and $y(T_0)\subseteq U_{y_0}$. 
For every $t\in T_0$ we then obtain 
$$\pi_G(\widetilde{g}_2(t))
=\pi_G(\widetilde{\psi}_{i_2i_1}(x(t))\widetilde{g}_1(t)\widetilde{\psi}_{j_2j_1}(y(t))^{-1})$$
hence, since $\Ker\pi_G=\Lambda$, 
it follows that the smooth mapping 
$$T_0\to \widetilde{G},\quad 
t\mapsto 
\widetilde{g}_2(t)^{-1}
\widetilde{\psi}_{i_2i_1}(x(t))\widetilde{g}_1(t)\widetilde{\psi}_{j_2j_1}(y(t))^{-1}
$$
takes values in $\Lambda$. 
Since $\Lambda$ is pseudo-discrete, it follows that the above mapping is constant, 
hence there exists $\lambda_0\in\Lambda$ with 
$$(\forall t\in T_0)\quad \widetilde{g}_2(t)
=\lambda_0\widetilde{\psi}_{i_2i_1}(x(t))\widetilde{g}_1(t)\widetilde{\psi}_{j_2j_1}(y(t))^{-1}.$$
Since $\gamma_1(t_0)=\gamma_2(t_0)$, we then obtain $\lambda_0=\1_{\widetilde{G}}$, 
and then, finally, $\gamma_1(t)=\gamma_2(t)$ for all $t\in T_0$. 
\end{proof}

{\bf Assertion 2}.  \emph{The Lie groupoid $\widetilde{ \Gc}\tto \widetilde{M}$ is locally trivial and the \Q-atlas $\pi_\Gc\colon \widetilde{\Gc}\to (P\times P)/G$ is a groupoid morphism over the \'etale smooth mapping $\pi_M\colon \widetilde{M}\to M$.
} 

\begin{proof}[Proof of Assertion 2] 
In order to see that the Lie groupoid $\widetilde{ \Gc}\tto \widetilde{M}$ is locally trivial we can directly check that the mapping 
$(\widetilde{\bt},\widetilde{\bs})\colon \widetilde{ \Gc}\to \widetilde{M}\times \widetilde{M}$ is a surjective submersion. 
Alternatively, we  can use the fact that $\widetilde{\chi}\colon(\widetilde{P}\times \widetilde{P})/\widetilde{G}\to\widetilde{\Gc}$  is an isomorphism of Lie groupoids, 
and the Lie groupoid $(\widetilde{P}\times \widetilde{P})/\widetilde{G}\to\widetilde{\Gc}$ is locally trivial since it is the gauge groupoid of the trivial principal bundle $\widetilde{\beta}\colon \widetilde{P}\to \widetilde{M}$. 

The fact that $\pi_\Gc\colon \widetilde{\Gc}\to (P\times P)/G$ is a groupoid morphism over the mapping $\pi_M\colon \widetilde{M}\to M$ follows by the commutative diagram \eqref{GaugeQGroupoid_proof_eq0}, in which $\widetilde{\chi}$ is a groupoid isomorphism over the identity mapping $\id_{\widetilde{M}}\colon \widetilde{M}\to\widetilde{M}$, 
while $\widetilde{\pi}_\Gc$ is a groupoid morphism over the mapping $\pi_M\colon \widetilde{M}\to M$. 
 \end{proof}
 
 {\bf Assertion 3}. \emph{$ \Gc\tto M$ is a \Q-groupoid and $\pi_ \Gc\colon \widetilde{\Gc}\to (P\times P)/G$ is a \Q-groupoid morphism over~$\pi_M\colon \widetilde{M}\to M$.}
 
 \begin{proof} 
 	In order to prove that  $ \Gc\tto M$ is a \Q-Lie groupoid, using Assertions 1--2,  we only need to show that the assumptions of Proposition \ref{preliminaries}  are satisfied and that the multiplication $\bm\colon\Gc^{(2)}\to\Gc$ is \Q-smooth.
 
 From the definition of $\widetilde \bs$ and $ \bs$ we have 
 $$ \bs\circ \pi_ \Gc=\pi_M\circ \widetilde \bs.$$
  Since $\widetilde \bs$ is a submersion,  $\pi_ \Gc\colon\widetilde{ \Gc}\to  \Gc$ and $\pi_M:\widetilde{M}\to M$ are \Q-atlases, it follows that $ \bs$  is a \Q-submersion.  
 In the same way, from there definitions we have 
 $$ \bi\circ \pi_ \Gc=\pi_ \Gc\circ \widetilde \bi$$
 and so $ \bi$ is smooth.
 
 Also, it directly follows from there definitions, that  $(\pi_ \Gc\times\pi_\Gc)(\widetilde{\Gc}^{(2)})= \Gc^{(2)}$ and 
 $$ \bm\circ (\pi_ \Gc, \pi_ \Gc)\vert_{\widetilde{ \Gc}^{(2)}}=\pi_ \Gc\circ \widetilde \bm$$
 which implies that $ \bm$ is smooth.
  
  All these relations implies that $ \Gc\tto M$ is a \Q-groupoid and $\pi_ \Gc$ is a \Q-groupoid morphism over $\pi_M$. 
  Finally, we have 
  $$ (\pi_M,\pi_M)\circ (\widetilde{\bt}\times\widetilde{\bs})=(\bt\times\bs)\circ(\pi_\Gc,\pi_\Gc)$$ 
  where $\widetilde{\bt}\times\widetilde{\bs}\colon \widetilde{\Gc}\to\widetilde{M}\times\widetilde{M}$ is a surjective submersion, 
  hence $\widetilde{\bt}\times\widetilde{\bs}\colon \Gc\to M\times M$ is a surjective \Q-submersion. 
  Thus, the fact that $\widetilde{ \Gc}\tto\widetilde{M}$ is locally trivial implies the same property for $ \Gc\tto M$.
   \end{proof}
The proof of Theorem~\ref{GaugeQGroupoid} is complete by the preparatory remarks along with Assertions 1--3. 
\end{proof}
 
Now  we have the following generalization of a  classical result of locally trivial Lie groupoids:

\begin{theorem}\label{isoloctricQgroupoid} 
Every locally trivial \Q-groupoid is isomorphic to the gauge groupoid of a principal  \Q-bundle.
\end{theorem}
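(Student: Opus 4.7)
The plan is to adapt the classical construction of the gauge groupoid (cf. \cite[Thm.~1.3.4]{Ma05}) to the \Q-category, using Theorem~\ref{G-principalbundle} as the key input. Since a locally trivial \Q-groupoid is transitive, we may assume $M$ is connected (otherwise treat each component separately). First I would fix a basepoint $x_0 \in M$ and set
\[
G := \Gc(x_0), \qquad P := \Gc(x_0, -).
\]
By Proposition~\ref{generalproperties}\eqref{generalproperties_item2}, $G$ is a \Q-group, and by Theorem~\ref{G-principalbundle}, the right-multiplication action of $G$ on $P$ makes $\bt_{x_0}\colon P \to M$ into a \Q-principal bundle with structure group $G$. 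The candidate isomorphism is
\[
\Phi\colon (P \times P)/G \to \Gc, \qquad [p, q] \mapsto p q^{-1},
\]
which is well-defined because $(pg)(qg)^{-1} = p q^{-1}$ for all $g \in G$.

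A direct check against the definitions in Section~\ref{Sect8} shows that $\Phi$ intertwines source, target, unit, multiplication, and inversion, hence it is a groupoid morphism over $\id_M$. Injectivity is clear, and surjectivity follows from transitivity of $\Gc$ (Proposition~\ref{txSubmersion}): given $g \in \Gc$, I pick any $q \in \Gc(x_0, \bs(g))$ and set $p := g q \in P$, whence $\Phi([p,q]) = g$. For \Q-smoothness of $\Phi$ itself, I would factor it through the \Q-atlas on $(P \times P)/G$ produced by Theorem~\ref{GaugeQGroupoid} and show that the lift $\mu\colon P \times P \to \Gc$, $\mu(p,q) := p q^{-1}$, is \Q-smooth: the inclusion $P \hookrightarrow \Gc$ is a \Q-immersion (Proposition~\ref{preliminaries}), $\bi$ is \Q-smooth by hypothesis, and $\bm$ is smooth on the immersed \Q-submanifold $\Gc^{(2)} \subseteq \Gc \times \Gc$ (Proposition~\ref{preliminaries}\eqref{preliminaries_item1}), through which the pair $(p, q^{-1})$ lands because $\bs(p) = x_0 = \bt(q^{-1})$.

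The main obstacle will be \Q-smoothness of the inverse $\Phi^{-1}$, for which there is no canonical global formula. The idea is to work locally via \Q-smooth sections of $\bt_{x_0}$. By Remark~\ref{Qcross}, the \Q-submersion $\bt_{x_0}\colon P \to M$ admits \Q-smooth local sections $\sigma\colon U \to P$ on an open cover $\{U_i\}_{i \in I}$ of $M$. For $g \in \bs^{-1}(U) \subseteq \Gc$, the element $\sigma(\bs(g)) \in P$ satisfies $\bt(\sigma(\bs(g))) = \bs(g)$ and $\bs(\sigma(\bs(g))) = x_0$, so $g \cdot \sigma(\bs(g))$ is a well-defined element of $P$. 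Setting
\[
\Psi(g) := \bigl[\, g \cdot \sigma(\bs(g)),\; \sigma(\bs(g)) \,\bigr] \in (P \times P)/G,
\]
one computes $\Phi(\Psi(g)) = g \sigma(\bs(g)) \sigma(\bs(g))^{-1} = g$, so $\Psi = \Phi^{-1}$ on $\bs^{-1}(U)$. The delicate point is that the map $g \mapsto (g\sigma(\bs(g)), \sigma(\bs(g)))$ takes values in the immersed (but not embedded) \Q-submanifold $P \times P \subseteq \Gc \times \Gc$; I would invoke the universality property of immersed \Q-submanifolds (\cite[Ch.~I, \S2, no.~2]{Br73}) to upgrade its evident \Q-smoothness into $\Gc \times \Gc$ to \Q-smoothness into $P \times P$. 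Finally, since any two local sections $\sigma_i, \sigma_j$ over $U_i \cap U_j$ differ by a \Q-smooth $G$-valued transition that is absorbed by the gauge equivalence, the local formulas glue to a global \Q-smooth inverse $\Phi^{-1}$, completing the argument.
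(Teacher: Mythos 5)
Your proposal is correct and follows essentially the same route as the paper: fix $x_0\in M$, take $P=\Gc(x_0,-)$ with structure group $G=\Gc(x_0)$ via Theorem~\ref{G-principalbundle}, and show that $[p,q]\mapsto pq^{-1}$ is an isomorphism of \Q-groupoids onto $\Gc$. The only point where you diverge is the \Q-smoothness of $\Phi^{-1}$, which you rightly flag as the main obstacle: the paper handles it by factoring $\widehat{\Phi}\colon(g,h)\mapsto gh^{-1}$ through the pullback $\Gc(x_0,-)\times_M\Gc$ of $\bt$ along $\bt_{x_0}$, concluding that $\widehat{\Phi}$ is a surjective \Q-submersion (base change of \Q-submersions) and hence that the induced bijection $\Phi$ on the quotient is a \Q-diffeomorphism; you instead write down an explicit local inverse $g\mapsto[g\,\sigma(\bs(g)),\sigma(\bs(g))]$ using local \Q-smooth sections $\sigma$ of $\bt_{x_0}$ from Remark~\ref{Qcross}. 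These are two faces of the same fact — a surjective \Q-submersion is exactly a map admitting local \Q-smooth sections — so your version makes explicit what the paper's terse "so are $\Phi$ and $\Phi^{-1}$" leaves implicit, at the cost of the extra care you correctly note about landing in the immersed \Q-submanifolds $P\times P$ and $\Gc^{(2)}$. One small simplification: since $\Phi$ is already known to be a bijection, your local inverses are automatically restrictions of the global set-theoretic inverse, so no gluing or transition-function argument is needed at the end.
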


\begin{proof} 
	Let $ \Gc\tto M$ be a locally trivial \Q-groupoid. 
	Fix some $x\in M$. 
	From Theorem~\ref{G-principalbundle} 
  the mapping $ \bt_x:  \Gc(x,-)\to M$ 
    is  a principal \Q-bundle whose structure group is $ \Gc(x)$. 
    The corresponding gauge groupoid is  $( \Gc(x,-)\times \Gc(x,-))/ \Gc(x)\tto M$.
We consider the mapping 
$$\widehat{\Phi}\colon \Gc(x,-) \times  \Gc(x,-)\to  \Gc,\quad  
\widehat{\Phi}(g,h):=gh^{-1}.$$
    We will show that $\widehat{\Phi}$ is a  surjective \Q-submersion (cf. \cite[proof of Prop. 1.3.3]{Ma05}).  
    The pull-back of $ \bt: \Gc\to M$ over $ \bt_x: \Gc(x,-)\to M$ is $ \Gc(x,-)\times_M \Gc\to  \Gc(x,-)$ and we have the following diagram 
   \begin{equation}\label{pullbackG}
    \xymatrix{
 \Gc(x,-)\times_M \Gc \ar[d] \ar[r]^{\;\;\;\;\;\;\;\;\;\theta} & \Gc\ar[d]^ \bt\\
 \Gc(x,-) \ar[r]^{ \bt_x}          &M}
\end{equation} 
  Since each vertical arrows  and $ \bt_x$ are all surjective  \Q-submersions, so is $\theta$, 
  by \cite[Ch. I, \S 2, no. 6, Cor., page 244]{Br73}.   
  We now define the maps
    \begin{align*}
    &\phi\colon \Gc(x,-)\times \Gc(x,-)\to  \Gc(x,-)\times_M \Gc, \quad (g,h)\mapsto (g, gh^{-1}),\\
    &\psi\colon \Gc(x,-)\times_M \Gc\to  \Gc(x,-)\times \Gc(x,-),  \quad (g,h)\mapsto (g, h^{-1}g).
    \end{align*}
     Then $\phi^{-1}=\psi$  and, as  $ \bm$ and $ \bi$ are \Q-smooth, so are $\psi$ and $\phi$. But $\widehat{\Phi}=\theta\circ \psi $  and, since $\theta$ is a surjective \Q-submersion, 
     so is $\widehat{\Phi}$. 
    
    On the other hand, consider the quotient map
      $$\Phi: ( \Gc(x,-)\times \Gc(x,-))/ \Gc(x))\to\Gc,\qquad [g,h]\mapsto gh^{-1}.$$
    Then we have the following commutative diagram
    \begin{equation}
	\label{hatPhi}
	\xymatrix{
		 \Gc(x,-)\times  \Gc(x,-)\ar[d] \ar[r]^{\;\;\;\;\;\;\;\;\;\;\;\;\widehat{\Phi}}&   \Gc \\
	 (\Gc(x,-)\times  \Gc(x,-))/ \Gc(x) \ar[ur]_{\;\;\;\;\;\;\;\;\;\Phi}& 
	}
	\end{equation}
	It follows that $\Phi$ is a bijection and, as the horizontal and vertical arrows are \Q-smooth, so are $\Phi$ and $\Phi^{-1}$.
	
	Finally, it is straightforward to prove that $\Phi$ is a groupoid morphism.
\end{proof}

\begin{remark}\label{isoG-Gauge} 
\normalfont 
	 In Theorem \ref{isoloctricQgroupoid}, the isomorphism $\Phi$ between $\mathcal {G}$ and the gauge groupoid of the principal bundle $ \bt_x:  \Gc(x,-)\to M$ depends on the choice of the point $x\in M$. However, from the proof of Proposition \ref{txSubmersion}, if $g$ belongs to $ \Gc(x,y)$ then $R_g$ induces a diffeomorphism from the principal bundle and $ \bt_y:  \Gc(y,-)\to M$ and so the conclusion of Theorem \ref{isoloctricQgroupoid} does not depends on the choice of such a point in $M$.
\end{remark}

\subsection*{The Ehresmann isomorphism for principal \Q-bundles}
If $\beta\colon P\to M$ is a smooth principal bundle whose structure group is a Lie group~$G$, 
and with the gauge groupoid $\Gc:=(P\times P)/G\tto M$, 
then there exists a natural isomorphism of Lie algebroids 
\begin{equation*}
\Ehr(\beta)\colon (TP)/G\mathop{\longrightarrow}\limits^\sim \mathcal{AG}
\end{equation*} 
called the \emph{Ehresmann isomorphism} in \cite[\S 4, Ex. (a), page 533]{Pr83} 
because of the first occurrence of that vector bundle isomorphism in \cite[\S4, page 40]{Eh51}. 
See also \cite[\S 15, page 95]{Lb71} and  \cite[subsect. 2.2]{Lb07}. 
We recall that, in order to construct the aforementioned isomorphism, 
one starts from the quotient map $\Psi\colon P\times P\to (P\times P)/G=\Gc$, 
whose tangent map $\partial_1\Psi$ with respect to its first variable 
fits in a commutative diagram 
\begin{equation*}
	\xymatrix{
TP \ar[d] \ar@{^{(}->}[r]^{E} & (TP)\times P \ar[r]^{\quad \partial_1\Psi} \ar[d] & T^\bs\Gc \ar[d]\\
P \ar@{^{(}->}[r]^{\1} 
& P\times P \ar[r]^{\Psi} & \Gc
}
\end{equation*}
Here $\1\colon P\to P\times P$, $p\mapsto (p,p)$, 
and $E\colon TP\to (TP)\times P$, $E(v):=(v,p)$ if $v\in T_pP$. 
Since $(\Psi\circ \1)(P)=\1_\Gc$ (the image in $\Gc$ 
of the unit space of the groupoid $\Gc$), 
we obtain $\Ran((\partial_1\Psi)\circ E)=\mathcal{AG}$ 
and the pair $((\partial_1\Psi)\circ E,\Psi\circ\1)$ 
is a fibrewise isomorphism from the tangent vector bundle $TP\to P$ onto the Lie algebroid $\mathcal{AG}\to \1_\Gc\simeq P/G$. 
By the definition of the quotient map $\Psi$, we additionally have $\Psi(pg,qg)=\Psi(p,q)$ for all $p,q\in P$ and $g\in G$, 
hence the map $(\partial_1\Psi)\circ E$ is constant on the orbits of the tangent action $TP\times G\to TP$, 
and  we then obtain the commutative diagram 
\begin{equation*}
	\xymatrix{
		TP  \ar[r]^{E\circ \partial_1\Psi} \ar[d] & \mathcal{AG} \\
		(TP)/G \ar@{.>}[ur]_{\Ehr(\beta)} &  
	}
\end{equation*}
in which the vertical arrow is the quotient map cf. \cite[Prop. 3.2.1]{Ma05}, 
while $\Ehr(\beta)$ is a fibrewise isomorphism over the diffeomorphism $P/G\simeq \1_\Gc$, hence $\Ehr(\beta)$ is an isomorphism of vector bundles. 
Moreover, one can check that $\Ehr(\beta)$ is actually an isomorphism of Lie algebroids, 
cf.  e.g., \cite[subsect. 2.2]{Lb07}. 

If we denote by $\mathbb{PB}$ the category whose objects are the smooth principal bundles, 
by $\mathbb{LA}$ the category whose objects are the Lie algebroids and the morphisms are discussed in \cite[\S 4.3]{Ma05}, 
by $\mathbb{LG}$ the category whose objects are the Lie groupoids
then we have the diagram of functors 
\begin{equation*}
\xymatrix{
\mathbb{PB} \ar[r]^{\gauge} \ar[d]_{\Atiyah} & \mathbb{LG} \ar[d]^{\LieF} \\
\mathbb{LA} 
& \mathbb{LA}
}
\end{equation*}
Here we use the following notation: 
\begin{itemize}
	\item $\gauge\colon \mathbb{PB}\to \mathbb{LG}$ is a functor, 
	whose action on morphisms is described in \cite[Ex. 1.2.9]{Ma05}; 
	\item $\LieF\colon \mathbb{LG}\to \mathbb{LA}$ is the Lie functor 
	that is takes every Lie groupoid to its corresponding Lie algebroid cf. \cite[\S 3.5, page 125]{Ma05}; 
	\item $\Atiyah\colon \mathbb{PB}\to\mathbb{LA}$ 
	is the Atiyah functor that takes a principal bundle $\beta\colon P\to M$ with its structure group~$G$ to the Lie algebroid $\Atiyah(\beta):=(TP)/G$, while its action on morphisms of principal bundles is 
	described in Definition~\ref{AP_def_morph_algbd} and Proposition~\ref{AP_morph_algbd_prop}. 
\end{itemize}
The Ehresmann isomorphism gives an isomorphism of functors (natural isomorphism)
denoted 
\begin{equation*}
	\Ehr\colon \LieF\circ\gauge\to\Atiyah. 
\end{equation*}
That is, the Lie algebroid isomorphism 
$\Ehr(\beta)\colon (\LieF\circ\gauge)(\beta)\to\Atiyah(\beta)$ 
is canonical in the sense that 
for every pair of smooth principal bundles $\beta_j\colon P_j\to M_j$ 
if $\varphi$ is a morphism from $\beta_1$ to $\beta_2$, then 
the diagram 
\begin{equation*}
	\xymatrix{
(\LieF\circ\gauge)(\beta_1) \ar[rr]^{\Ehr(\beta_1)} \ar[d]_{\Atiyah(\varphi)} & & \Atiyah(\beta_1) \ar[d]^{\Atiyah(\varphi)}\\
(\LieF\circ\gauge)(\beta_2) \ar[rr]_{\Ehr(\beta_2)}  & &  \Atiyah(\beta_2)
}
\end{equation*}
is commutative.

 We now show that the above classical facts actually carry over from smooth principal bundles to principal \Q-bundles. 
 To this end we prove the following result, which is applicable to every principal \Q-bundle, taking into account Theorem~\ref{GaugeQGroupoid}. 
 
 \begin{lemma}
 \label{Ehr_lemma}
 Assume the commutative diagram 
 \begin{equation*}
 	\xymatrix{
 		\widetilde{M} \ar[d]_{\pi_M} & \widetilde{P} \ar[l]_{\widetilde{\beta}} \ar[d]_{\pi_P} & \widetilde{P}\times \widetilde{G} \ar[l]_{\widetilde{A}}
 		 \ar[d]^{(\pi_P\times \pi_G)} \\
 		M & P \ar[l]^{\beta} & P\times G \ar[l]^{A}
 	}
 \end{equation*}
where 
\begin{itemize}
	\item $\widetilde{\beta}\colon\widetilde{P}\to\widetilde{M}$ is a smooth principal bundle with its structure group $\widetilde{G}$ acting via $\widetilde{A}$; 
	\item $\beta\colon P\to M$ is a principal \Q-bundle whose structure group is the \Q-group $G$, acting via $A$; 
	\item the maps $\pi_M$, $\pi_P$, and $\pi_G$ are \Q-atlases, and $\pi_G$ is additionally a group homomorphism. 
\end{itemize}
Also consider the diagonal actions of $\widetilde{G}$  on $\widetilde{P}\times\widetilde{P}$ with the corresponding quotient map $\widetilde{\Phi}\colon \widetilde{P}\times\widetilde{P}\to(\widetilde{P}\times\widetilde{P})/\widetilde{G}=:\widetilde{\Gc}$ and, 
similarly, the diagonal action of $G$ on $P\times P$ with the quotient map 
$\Phi\colon P\times P\to(P\times P)/G=\Gc$, and assume that the mapping 
\begin{equation*}
\pi_\Gc\colon \widetilde{\Gc}\to\Gc,\quad 
(\widetilde{p},\widetilde{q})\widetilde{G}\mapsto (\pi_P(\widetilde{p}),\pi_P(\widetilde{q}))G
\end{equation*}
is a \Q-atlas. 

Then the morphisms of Lie algebroids 
$\Atiyah(\pi_P)$ and $\Ac(\pi_\Gc)$   
over the \'etale map~$\pi_M$  are fibrewise isomorphisms of vector bundles, 
and there exists a unique isomorphism of Lie algebroids $\Ehr(\beta)\colon (TP)/G\to\mathcal{AG}$ for which the diagram 
\begin{equation*}
\xymatrix{T\widetilde{P} \ar[d]_{T(\pi_P)} \ar[rr]^{
	q_{T\widetilde{P}}} & & (T\widetilde{P})/\widetilde{G} \ar[d]_{
\Atiyah(\pi_P)} \ar[rr]^{\Ehr(\widetilde{\beta})} & & \mathcal{A}\widetilde{\Gc} \ar[d]^{\mathcal{A}(\pi_\Gc)} \\
TP \ar[rr]_{
q_{TP}} & &	(TP)/G \ar@{.>}[rr]_{\Ehr(\beta)}& & \mathcal{AG}
}
\end{equation*}
is commutative.
 \end{lemma}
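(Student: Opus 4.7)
My strategy is to construct $\Ehr(\beta)$ by descending the classical Ehresmann isomorphism $\Ehr(\widetilde{\beta})$ through the two vertical \Q-atlases in the diagram, after first showing that these \Q-atlases induce fiberwise isomorphisms of vector bundles.

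The key preliminary observation is that if $\pi\colon N\to S$ is a \Q-atlas with $N$ a smooth manifold and $S$ a \Q-manifold, then for every $n\in N$ the tangent map $T_n\pi\colon T_nN\to T_{\pi(n)}S$ is a linear isomorphism: this is built into the construction of $T_{\pi(n)}S$ in \cite{Br73}, since the transversal local diffeomorphisms around any two preimages of $\pi(n)$ identify their tangent spaces. Applied to $\pi_P$, this yields that $T\pi_P\colon T\widetilde{P}\to TP$ is fiberwise a linear isomorphism. The fiber of $(T\widetilde{P})/\widetilde{G}$ over $\widetilde{m}\in\widetilde{M}$ can be identified with $T_{\widetilde{p}}\widetilde{P}$ for any $\widetilde{p}\in\widetilde{\beta}^{-1}(\widetilde{m})$ (since the $\widetilde{G}$-action is free on $\widetilde{\beta}^{-1}(\widetilde{m})$), and analogously for the lower row, so $\Atiyah(\pi_P)$ is fiberwise modeled on the isomorphism $T_{\widetilde{p}}\pi_P$ and is therefore a fiberwise linear isomorphism over the \'etale $\pi_M$. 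The same argument, combined with the canonical fiberwise isomorphism $\1^{!}\colon\mathcal{AG}\to T^{\bs}\Gc\vert_{\1_M}$ from Subsection~\ref{LieAlgebroidQQtLieGroupoid} (and its analogue for $\widetilde{\Gc}$), shows that $\mathcal{A}(\pi_\Gc)$ is a fiberwise isomorphism as well.

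To define $\Ehr(\beta)$ I work locally. Since $\pi_M$ is \'etale, every $m\in M$ has an open neighborhood $U\subseteq M$ admitting a smooth local section $\sigma\colon U\to\widetilde{M}$. Restricting the diagram to $\sigma(U)\subseteq\widetilde{M}$ turns both vertical maps into vector bundle isomorphisms, so the commutativity required in the statement forces
\begin{equation*}
	\Ehr(\beta)\vert_U := \bigl(\mathcal{A}(\pi_\Gc)\vert_{\sigma(U)}\bigr)\circ \Ehr(\widetilde{\beta})\vert_{\sigma(U)}\circ \bigl(\Atiyah(\pi_P)\vert_{\sigma(U)}\bigr)^{-1}.
\end{equation*}
Uniqueness of $\Ehr(\beta)$ on all of $M$ is immediate from the surjectivity of $\pi_M$ and the fiberwise bijectivity of $\Atiyah(\pi_P)$. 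The delicate point is independence from the choice of $\sigma$: if $\sigma'$ is another local section through $m$, then transversal local diffeomorphisms of $\widetilde{M}$ joining the two lifts must intertwine the top-row data, and this is guaranteed by the canonical (natural) character of the classical Ehresmann isomorphism combined with the compatibility of $\Atiyah(\pi_P)$ and $\mathcal{A}(\pi_\Gc)$ with those transversal diffeomorphisms.

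Bijectivity of $\Ehr(\beta)$ on fibers is immediate from the first step. To see it is a Lie algebroid morphism (hence, being fiberwise bijective, a Lie algebroid isomorphism), I would use the local character of this property (Remark~\ref{trivLiealg_morph_loc}): on each chart $U$, the map $\Ehr(\beta)\vert_U$ is a composition of three Lie algebroid morphisms, namely $\mathcal{A}(\pi_\Gc)$ (via Proposition~\ref{Liefunctor_prop}), $\Ehr(\widetilde{\beta})$ (classical), and $\Atiyah(\pi_P)^{-1}$ (invoking Proposition~\ref{AP_morph_algbd_prop}). The main obstacle I foresee is the well-definedness step above: while the naturality of the classical $\Ehr$ is standard, extracting the precise identifications needed to match the two local prescriptions of $\Ehr(\beta)\vert_U$ requires a careful comparison of the equivalence relation defining $\pi_\Gc$, namely~\eqref{simTildeMathG}, with the relation~\eqref{ildePsim} used to define $\pi_P$, under the tangent of the quotient map $\widetilde{P}\times\widetilde{P}\to\widetilde{\Gc}$.
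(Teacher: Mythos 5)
Your overall strategy coincides with the paper's: show that the two vertical arrows $\Atiyah(\pi_P)$ and $\A(\pi_\Gc)$ are fiberwise isomorphisms of vector bundles over the \'etale map $\pi_M$, and then descend the classical Ehresmann isomorphism $\Ehr(\widetilde{\beta})$ through them. The execution of the sub-steps differs, though. For the fiberwise isomorphism property you argue directly from the fact that the tangent map of a \Q-atlas is fiberwise a linear isomorphism; the paper instead localizes to trivializations (via Remark~\ref{AP_loc}, Remark~\ref{trivLiealg_morph_loc} and Remark~\ref{decomposing}) and reads off the fiberwise bijectivity from the explicit coordinate formulas \eqref{triv_eq3} and \eqref{Liefunctor_prop_proof_eq1}. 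Both work; yours is shorter, while the paper's localization pays off again at the end, where the Lie algebroid morphism property of $\Ehr(\beta)$ is verified by a direct check in the trivial case. Your alternative route to that last property --- writing $\Ehr(\beta)\vert_U$ as a composition of three Lie algebroid morphisms --- contains one inaccuracy: Proposition~\ref{AP_morph_algbd_prop} gives that $\Atiyah(\pi_P)$ is a Lie algebroid morphism, not that its local inverse is one, so you must add the (standard, but not stated in the paper) observation that a fiberwise bijective Lie algebroid morphism over a diffeomorphism is a Lie algebroid isomorphism whose inverse is again a morphism. Finally, the well-definedness of the descended map, i.e.\ its independence of the local section $\sigma$ of $\pi_M$, which you rightly single out as the delicate point, is precisely the step the paper also treats briefly; your identification of the needed ingredients (naturality of the classical Ehresmann isomorphism together with the compatibility of $\Atiyah(\pi_P)$ and $\A(\pi_\Gc)$ with the transversal diffeomorphisms of the \Q-atlases, traced through the relations \eqref{ildePsim} and \eqref{simTildeMathG}) is correct, but to make the proof complete you should carry that comparison out rather than leave it as a foreseen obstacle.
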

 
\begin{proof}
The mapping	$\Atiyah(\pi_P)$ is a morphism of Lie algebroids   
	over the \'etale map $\pi_M$ by Proposition~\ref{AP_morph_algbd_prop}. 
	  In order to prove that $\Atiyah(\pi_P)$ is a fibrewise isomorphism of vector bundles, 
	we use 
the localization property of the Atiyah functor (Remark~\ref{AP_loc}) and the localization property of the Lie algebroid morphisms (Remark~\ref{trivLiealg_morph_loc}) 
to see that we may assume without loss of generality that both the smooth principal bundle $\widetilde{P}$ and the principal \Q-bundle $P$ are trivial. 
On the other hand, since $\pi_G\colon\widetilde{G}\to G$ is a \Q-atlas, 
it follows that its tangent map $T_\1(\pi_G)\colon T_\1\widetilde{G}\to T_\1G$ 
is an isomorphism of Lie algebras. 
Since moreover $\pi_M\colon\widetilde{M}\to M$ is an \'etale map, it follows by the explicit description of $\Atiyah(\pi_P)$ for trivial principal \Q-bundles in Eq.~\eqref{triv_eq3} in Example~\ref{triv} that $\Atiyah(\pi_P)$ is a fibrewise isomorphism of vector bundles. 

On the other hand, both \Q-groupoids $\widetilde{\Gc}$ and $\Gc$ are locally trivial by Theorem~\ref{GaugeQGroupoid}, 
hence the mapping	$\A(\pi_\Gc)$ is a morphism of Lie algebroids   
over the \'etale map~$\pi_M$ by Proposition~\ref{Liefunctor_prop}. 
As above, for proving that $\A(\pi_\Gc)$ is a fibrewise isomorphism of vector bundles, 
we use 
 the localization property of the Lie algebroid morphisms (Remark~\ref{trivLiealg_morph_loc}) and, moreover, the local triviality of the \Q-groupoids $\widetilde{\Gc}$ and $\Gc$ as given by Remark~\ref{decomposing} 
to see that we may assume without loss of generality that both the smooth principal bundle $\widetilde{P}$ and the principal bundle $P$ are trivial. 
We have noted above that $T_\1(\pi_G)\colon T_\1\widetilde{G}\to T_\1G$ 
is an isomorphism of Lie algebras. 
Since moreover $\pi_M\colon\widetilde{M}\to M$ is an \'etale map, it follows by the explicit description of $\A(\pi_\Gc)$ for trivial principal \Q-bundles in Eq.~\eqref{Liefunctor_prop_proof_eq1} in the proof of Proposition~\ref{Liefunctor_prop} that $\A(\pi_\Gc)$ is a fibrewise isomorphism of vector bundles. 

Now, using the above fibrewise isomorphisms $\Atiyah(\pi_P)$ and $\A(\pi_\Gc)$ along with the classical Ehresmann isomorphism $\Ehr(\widetilde{\beta})\colon(T\widetilde{P})/\widetilde{G}\to\A\widetilde{\Gc}$ 
for the smooth principal bundle $\widetilde{\beta}\colon\widetilde{P}\to\widetilde{M}$, 
it easily follows that there exists a vector bundle isomorphism $\Ehr(\beta)\colon(TP)/G\to\A\Gc$ for which the diagram in the statement is commutative. 
It remains to check that $\Ehr(\beta)$ is a morphism of Lie algebroids. 
To this end we use once again the localization property of the Atiyah functor (Remark~\ref{AP_loc}) and the localization property of the Lie algebroid morphisms (Remark~\ref{trivLiealg_morph_loc}) 
to see that we may assume without loss of generality that both the smooth principal bundle $\widetilde{P}$ and the principal \Q-bundle $P$ are trivial. 
In this special case it is straightforward to check that $\Ehr(\beta)$ is a morphism of Lie algebroids, and this completes the proof. 
\end{proof}

\begin{theorem}
\label{Ehr_th}
	The Ehresmann isomorphism $	\Ehr\colon \LieF\circ\gauge\to\Atiyah$ 
	is an isomorphism of functors on the category of principal \Q-bundles. 
\end{theorem}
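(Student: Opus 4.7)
The plan is to verify the two conditions required for $\Ehr$ to be a natural isomorphism of functors: first, that $\Ehr(\beta)\colon(TP)/G\to\mathcal{AG}$ is a Lie algebroid isomorphism for every object $\beta\colon P\to M$ of $\PB$, and second, that for every morphism $F\colon P_1\to P_2$ of \Q-principal bundles (over $f\colon M_1\to M_2$ with respect to a \Q-group morphism $\Theta\colon G_1\to G_2$) the naturality square
\begin{equation*}
\xymatrix{
(\LieF\circ\gauge)(\beta_1) \ar[rr]^{\Ehr(\beta_1)} \ar[d]_{(\LieF\circ\gauge)(F)} & & \Atiyah(\beta_1) \ar[d]^{\Atiyah(F)}\\
(\LieF\circ\gauge)(\beta_2) \ar[rr]_{\Ehr(\beta_2)} & & \Atiyah(\beta_2)
}
\end{equation*}
is commutative. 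The first condition has already been established in Lemma~\ref{Ehr_lemma}, so the real content is the naturality.

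My approach to naturality is to reduce to the case where both principal \Q-bundles are trivial. Specifically, I would combine the localization of the Atiyah functor (Remark~\ref{AP_loc}), the localization of Lie algebroid morphisms (Remark~\ref{trivLiealg_morph_loc}), and the analogous localization property of $\LieF\circ\gauge$ (which follows from Remark~\ref{decomposing}, since $\gauge$ restricts well to $\beta^{-1}(U)$ for open $U\subseteq M$). This reduces the verification to open trivializing neighborhoods $U_1\subseteq M_1$ and $U_2\subseteq M_2$ with $f(U_1)\subseteq U_2$ over which $P_1\vert_{U_1}\cong U_1\times G_1$ and $P_2\vert_{U_2}\cong U_2\times G_2$. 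In this trivial setting, $F$ takes the form $F(x,h)=(f(x),\psi(x)\Theta(h))$ for some \Q-smooth $\psi\colon U_1\to G_2$, as explained in Example~\ref{triv}, and both sides of the diagram become explicitly computable morphisms of trivial Lie algebroids $TU_1\oplus(U_1\times\gg_1)\to TU_2\oplus(U_2\times\gg_2)$.

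The key step, and the main obstacle, will be to verify the following explicit identity in the trivial case: under the identifications $(T(U\times G))/G\cong TU\oplus(U\times\gg)\cong\mathcal{A}((U\times G\times U))$ (the first being the right-trivialization in the proof of Proposition~\ref{AP_prop}, the second coming from Proposition~\ref{Liefunctor_prop}), the formula for $\Atiyah(F)$ recorded in~\eqref{triv_eq3} must coincide with the formula for $\mathcal{A}(\gauge(F))$ derived from Proposition~\ref{Liefunctor_prop}. Writing $\gauge(F)([(x,h_1),(y,h_2)])=[(f(x),\psi(x)\Theta(h_1)),(f(y),\psi(y)\Theta(h_2))]$ and differentiating at the unit section, both sides produce, on a pair $(X,v)\in T_xU_1\oplus\gg_1$, the vector $\bigl((Tf)(X),\,\Delta_{G_2}(\psi)(X)+\Ad_{G_2}(\psi(x))(T_\1\Theta)v\bigr)$; this calculation uses the right-invariant Maurer--Cartan form and the Darboux derivative formula of Remark~\ref{deriv}. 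Once this direct formal identity is confirmed, the commutativity in the general case follows by gluing over any trivializing cover, and the fact that each $\Ehr(\beta)$ is a Lie algebroid isomorphism (Lemma~\ref{Ehr_lemma}) completes the proof that $\Ehr\colon\LieF\circ\gauge\to\Atiyah$ is an isomorphism of functors.
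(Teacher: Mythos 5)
Your proposal is correct, but it distributes its effort differently from the paper. The paper's own proof of Theorem~\ref{Ehr_th} is essentially a one-step application of Lemma~\ref{Ehr_lemma}: given a principal \Q-bundle $\beta\colon P\to M$, one takes the smooth principal bundle $\widetilde{\beta}\colon\widetilde{P}\to\widetilde{M}$ and the locally trivial Lie groupoid $\widetilde{\Gc}$ supplied by Theorem~\ref{GaugeQGroupoid} (and Remark~\ref{principalbundleQchart}), and descends the \emph{classical} Ehresmann isomorphism $\Ehr(\widetilde{\beta})$ along the fiberwise isomorphisms $\Atiyah(\pi_P)$ and $\A(\pi_\Gc)$; the naturality of $\Ehr$ on morphisms of \Q-principal bundles is not spelled out beyond this object-wise construction. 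You instead take the object-wise isomorphism as given by Lemma~\ref{Ehr_lemma} (same as the paper) and concentrate on verifying the naturality square explicitly: localizing via Remarks~\ref{AP_loc}, \ref{trivLiealg_morph_loc} and \ref{decomposing} to trivial bundles, writing $F(x,h)=(f(x),\psi(x)\Theta(h))$, and checking that both $\Atiyah(F)$ (Eq.~\eqref{triv_eq3}) and $\A(\gauge(F))$ send $(X,v)$ to $\bigl((Tf)(X),\,\Delta_{G_2}(\psi)(X)+\Ad_{G_2}(\psi(x))(T_\1\Theta)v\bigr)$. That computation is correct (note that the formula in Proposition~\ref{Liefunctor_prop} covers only base-preserving morphisms without the $\psi$-twist, so your differentiation of $g\mapsto\psi(x)\Theta(g)\psi(y)^{-1}$ at the unit section, using the right trivialization of $TG_2$ as in Remark~\ref{TG_struct}, is genuinely needed and genuinely works). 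What your route buys is an explicit proof of the naturality that the paper leaves implicit; what the paper's route buys is brevity, by pushing all the work into the smooth lift and the already-established commutative diagram of Lemma~\ref{Ehr_lemma}. The only point you should make explicit when writing this up is that $\Ehr$ itself localizes, i.e.\ that $\Ehr(\beta)$ restricted over a trivializing open $U$ coincides with $\Ehr(\beta\vert_U)$ under the identifications of Remark~\ref{AP_loc}; this is what legitimizes the final gluing step, and it is the same localization the paper uses inside the proof of Lemma~\ref{Ehr_lemma}.
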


\begin{proof}
We must prove that if $G$ is a \Q-group and $\beta\colon P\to M$ is a principal \Q-bundle with its structure group~$G$ and its gauge groupoid $\Gc=(P\times P)/G\tto M$, then one has the Ehresmann isomorphism of Lie algebroids $\Ehr(\beta)\colon (TP)/G\to\A\Gc$. 
To this end we apply Proposition~\ref{Ehr_lemma} for the smooth principal groupoid $\widetilde{\Gc}$ given by Theorem~\ref{GaugeQGroupoid}. 
\end{proof}

\begin{corollary}
\label{integration}
Every transitive Lie algebroid over a second countable, smooth manifold is isomorphic to the Lie algebroid of a locally trivial \Q-groupoid. 
\end{corollary}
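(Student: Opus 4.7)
The strategy is to chain together the three main technical results of the paper.
Given a transitive Lie algebroid $q\colon A\to M$ with anchor $\anc\colon A\to TM$, the first step is to produce from $A$ an abstract Atiyah sequence in the sense of Definition~\ref{abstr_def}.
Since $A$ is transitive, $\anc$ is surjective, and its kernel $L:=\Ker\anc$ is a Lie algebra bundle over $M$ (this is the classical fact recorded in \cite[\S 3.3]{Ma05}).
Thus the short exact sequence
\begin{equation*}
\xymatrix{L\ \ar@{>->}[r]^{\inc}& A\ar@{->>}[r]^{\anc\ \ }& TM}
\end{equation*}
is an abstract Atiyah sequence associated with the given transitive Lie algebroid.

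Next, applying Theorem~\ref{AP_th} to this sequence, which uses in an essential way the hypotheses that $M$ is connected and second countable (so as to produce a countable simple open covering of~$M$), we obtain a \Q-principal bundle $\beta\colon P\to M$ with structure group some \Q-group $G$, together with an isomorphism of abstract Atiyah sequences between the above sequence of $A$ and the Atiyah sequence $0\to P\times_{\Ad_G}\gg\to \Atiyah(P)\to TM\to 0$ of $\beta$ furnished by Proposition~\ref{AP_prop}.
In particular $A\simeq \Atiyah(P)=(TP)/G$ as Lie algebroids over~$M$.

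The final step is to pass from the principal \Q-bundle $\beta$ to a \Q-groupoid realizing $A$.
Form the gauge groupoid $\Gc:=(P\times P)/G\tto M$; by Theorem~\ref{GaugeQGroupoid}, $\Gc\tto M$ is a locally trivial \Q-groupoid.
Its associated Lie algebroid $\A\Gc$, constructed in Subsection~\ref{LieAlgebroidQQtLieGroupoid}, is linked to the Atiyah algebroid by the Ehresmann isomorphism of Theorem~\ref{Ehr_th}, which yields a canonical isomorphism of Lie algebroids $\Ehr(\beta)\colon (TP)/G\mathop{\to}\limits^{\sim}\A\Gc$.
Composing the three Lie algebroid isomorphisms $A\simeq\Atiyah(P)=(TP)/G\simeq \A\Gc$ exhibits $A$ as the Lie algebroid of the locally trivial \Q-groupoid $\Gc\tto M$, as required.

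The core conceptual content is entirely contained in Theorems~\ref{AP_th}, \ref{GaugeQGroupoid} and~\ref{Ehr_th}; the present proof only has to assemble them.
The only point that requires a little care is the very first one, namely that a transitive Lie algebroid genuinely fits into an abstract Atiyah sequence as in Definition~\ref{abstr_def}, i.e.\ that $\Ker\anc$ is a Lie algebra bundle (and not merely a Lie algebroid with zero anchor); this is where the transitivity of $A$ is used, and is classical.
Everything else is a formal diagram chase through the constructions developed in Sections~\ref{Sect5} and~\ref{Sect8}, so no genuine obstacle is expected at this stage.
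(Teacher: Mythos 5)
Your proposal is correct and follows essentially the same route as the paper's own proof: form the abstract Atiyah sequence of the transitive Lie algebroid, integrate it via Theorem~\ref{AP_th} to a \Q-principal bundle, pass to its gauge groupoid (locally trivial by Theorem~\ref{GaugeQGroupoid}), and conclude with the Ehresmann isomorphism of Theorem~\ref{Ehr_th}. Your explicit remark that $\Ker\anc$ is a Lie algebra bundle (so that the sequence genuinely is an abstract Atiyah sequence in the sense of Definition~\ref{abstr_def}) is a useful detail that the paper leaves implicit, but it does not change the argument.
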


\begin{proof}
First apply Theorem~\ref{AP_th} to integrate the Atiyah sequence of an arbitrary transitive Lie algebroid $q\colon A\to M$ to a principal \Q-bundle $\beta\colon P\to M$, and then use the Ehresmann isomorphism given by Theorem~\ref{Ehr_th} to prove that the Lie algebroid $q\colon A\to M$ is isomorphic to the Lie algebroid of the gauge groupoid of $\beta\colon P\to M$. 
\end{proof}

\appendix

\section{On embedded \Q-submanifolds}
\label{AppA}

In this appendix we discuss a phenomenon that is specific to the \Q-manifolds as compared with the smooth manifolds, 
namely the fact that the singleton subsets may not be embedded submanifolds 
(Proposition~\ref{submanif_prop}). 
The significance of this fact for \Q-groupoids is that, even in the special case of \Q-groups, the space of units may not be an embedded submanifold. 
(See Corollary~\ref{submanif_cor}.) 

We recall from \cite[I.1.10]{KlMiSl93} that if $X$ is a smooth manifold, then a subset $X_0\subseteq X$ is a \emph{submanifold} if for every $x_0\in X$ there exist a local chart $u\colon U\to \Vc$ and a linear subspace $\Vc_0\subseteq\Vc$ with $u(U\cap X_0)=u(U)\cap\Vc_0$, where $U\subseteq X$ is an open subset with $x_0\in U$, $\Vc$ is a finite-dimensional real vector space, and $u(U)\subseteq \Vc$ is an open subset. 

\begin{remark}
	\label{shrink}
	\normalfont 
	In the above notation, for every subset $U'\subseteq U$ we have $u(U'\cap X_0)=u(U')\cap\Vc_0$. 
	
	In fact, $u(U'\cap X_0)\subseteq u(U\cap X_0)=u(U)\cap\Vc_0$ and 
	$u(U'\cap X_0)\subseteq\subseteq u(U')$, hence $u(U'\cap X_0)\subseteq u(U')\cap u(U)\cap\Vc_0=u(U')\cap\Vc_0$. 
	For the converse inclusion, we have $u(U')\cap\Vc_0\subseteq u(U)\cap \Vc_0=u(U\cap X_0)$ 
	and also $u(U')\cap\Vc_0\subseteq u(U')$, hence $u(U')\cap\Vc_0\subseteq u(U')\cap u(U\cap X_0)=u(U'\cap U\cap X_0)=u(U'\cap X_0)$. 
\end{remark}

\begin{lemma}
	\label{submanif_lemma}
	If $\pi_j\colon X_j\to S$ are compatible \Q-atlases for $j=1,2$, and a subset $S_0\subseteq S$ has the property that $\pi_1^{-1}(S_0)\subseteq X_1$ is an embedded submanifold, 
	then $\pi_2^{-1}(S_0)\subseteq X_2$ is an embedded submanifold as well. 
\end{lemma}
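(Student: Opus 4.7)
The plan is to transfer the local chart structure adapted to $\pi_1^{-1}(S_0)$ inside $X_1$ to corresponding charts around points of $X_2$, via the local diffeomorphisms provided by the compatibility of the two \Q-atlases. Recall that compatibility means precisely that the disjoint union $\pi_1\sqcup\pi_2\colon X_1\sqcup X_2\to S$ is itself a \Q-chart, which by the local diffeomorphism property in Definition~\ref{Q_def} supplies diffeomorphisms between neighborhoods in $X_1$ and $X_2$ intertwining $\pi_1$ and $\pi_2$.

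Concretely, I would fix an arbitrary $x_2\in X_2$. Since $\pi_1$ is surjective (being a \Q-atlas), there exists $x_1\in X_1$ with $\pi_1(x_1)=\pi_2(x_2)$. Applying the above property to the disjoint-union \Q-chart yields open neighborhoods $U_1\subseteq X_1$ of $x_1$ and $U_2\subseteq X_2$ of $x_2$ together with a diffeomorphism $h\colon U_1\to U_2$ satisfying $h(x_1)=x_2$ and $\pi_2\circ h=\pi_1\vert_{U_1}$. The decisive observation is then the identity $h^{-1}(\pi_2^{-1}(S_0)\cap U_2)=\pi_1^{-1}(S_0)\cap U_1$, which is immediate since for $z\in U_1$ one has $h(z)\in\pi_2^{-1}(S_0)$ iff $\pi_2(h(z))\in S_0$ iff $\pi_1(z)\in S_0$.

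Next, I would use the hypothesis that $\pi_1^{-1}(S_0)$ is an embedded submanifold of $X_1$ to pick a chart $u_1\colon V_1\to\Vc$ at $x_1$ and a linear subspace $\Vc_0\subseteq\Vc$ with $u_1(V_1\cap\pi_1^{-1}(S_0))=u_1(V_1)\cap\Vc_0$, shrink to $W_1:=V_1\cap U_1$ (invoking Remark~\ref{shrink} to preserve the adaptation identity on the smaller open set), and set $W_2:=h(W_1)$ together with $u_2:=u_1\circ h^{-1}\colon W_2\to u_1(W_1)$. Since $h$ is a diffeomorphism and $u_1$ is a smooth chart, $u_2$ is a smooth chart onto an open subset of $\Vc$ at $x_2$, and combining the preimage identity with the adaptation of $u_1$ yields $u_2(W_2\cap\pi_2^{-1}(S_0))=u_1(W_1\cap\pi_1^{-1}(S_0))=u_1(W_1)\cap\Vc_0=u_2(W_2)\cap\Vc_0$, as required.

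I do not anticipate any real obstacle here: once compatibility is unpacked into a local diffeomorphism intertwining $\pi_1$ and $\pi_2$, the adapted chart on $X_1$ can simply be pushed forward to $X_2$ by composition with $h^{-1}$. The only points requiring care are the appeal to surjectivity of $\pi_1$ to find a matching $x_1$ for each $x_2$, and the use of Remark~\ref{shrink} to preserve the submanifold-adaptation identity after shrinking the chart domain to $V_1\cap U_1$.
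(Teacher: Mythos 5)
Your proposal is correct and follows essentially the same route as the paper's proof: find $x_1$ with $\pi_1(x_1)=\pi_2(x_2)$ by surjectivity, obtain a transversal local diffeomorphism $h$ intertwining $\pi_1$ and $\pi_2$ from the compatibility of the atlases, verify $h$ carries $\pi_1^{-1}(S_0)$ to $\pi_2^{-1}(S_0)$ locally, and push the adapted chart forward via $u_1\circ h^{-1}$, using Remark~\ref{shrink} to preserve the adaptation identity after shrinking. The only cosmetic difference is that you unpack compatibility directly from Definition~\ref{Q_def}, whereas the paper cites a proposition from \cite{BPZ19} for the same local diffeomorphism.
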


\begin{proof}
	Let $x_2\in \pi_2^{-1}(S_0)$ arbitrary and denote $s_0:=\pi_2(x_2)\in S_0$. 
	Since the \Q-atlas $\pi_1\colon X_1\to S$ is surjective, we may select $x_1\in \pi_1^{-1}(S_0)$ with $\pi_1(x_1)=s_0$. 
	The subset $X_{01}:=\pi_1^{-1}(S_0)\subseteq X_1$ is a submanifold by hypothesis, hence 
	there exist a local chart $u_1\colon U_1\to \Vc$ and a linear subspace $\Vc_0\subseteq\Vc$ with $u_1(U_1\cap X_{01})=u_1(U_1)\cap\Vc_0$, where $U_1\subseteq X_1$ is an open subset with $x_1\in U_1$, $\Vc$ is a finite-dimensional real vector space, and $u_1(U_1)\subseteq \Vc$ is an open subset. 
	
	Using the hypothesis that the \Q-atlases $\pi_1$ and $\pi_2$ are compatible, 
	we can use \cite[Prop. 2.2]{BPZ19} to find open subsets $U_j\subseteq X_j$ for $j=1,2$ with $x_1\in U_1'\subseteq U_1$, 
	$x_2\in U_2'$, and a diffeomorphism $h\colon U_1'\to U_2'$ with $h(x_1)=x_2$ and $\pi_2\circ h=\pi_1\vert_{U_1'}$. 
	If we denote  $X_{02}:=\pi_2^{-1}(S_0)\subseteq X_2$, 
	it then follows that 
	\begin{equation}
		\label{submanif_lemma_proof_eq1}
		h(U_1'\cap X_{01})=U_2'\cap X_{02}. 
	\end{equation}
	Since $u_1(U_1\cap X_{01})=u_1(U_1)\cap\Vc_0$, it follows by Remark~\ref{shrink} 
	that 
	\begin{equation}
		\label{submanif_lemma_proof_eq2}
		u_1(U_1'\cap X_{01})=u_1(U_1')\cap\Vc_0
	\end{equation}
	as well. 
	Then, denoting $u_2:=u_1\circ h^{-1}\colon U_2'\to\Vc$, it follows that $u_2$ is a local chart of $X_2$ with $x_2\in U_2'$ and 
	\begin{align*}
		u_2(U_2'\cap X_{02})
		&=(u_1\circ h^{-1})(U_2'\cap X_{02})
		\mathop{=}\limits^{\eqref{submanif_lemma_proof_eq1}}
		u_1(U_1'\cap X_{01})
		\mathop{=}\limits^{\eqref{submanif_lemma_proof_eq2}}u_1(U_1')\cap\Vc_0 \\
		&=u_2(U_2')\cap\Vc_0.
	\end{align*}
	This completes the proof of the fact that the subset $X_{02}=\pi_2^{-1}(S_0)\subseteq X_2$ is an embedded submanifold. 
\end{proof}

\begin{definition}
	\label{submanif_def}
	\normalfont 
	If $S$ is a \Q-manifold, then a subset $S_0\subseteq S$ is called an \emph{embedded \Q-submanifold} if there exists a \Q-atlas $\pi\colon X\to S$ with the property that the subset $\pi^{-1}(S_0)\subseteq X$ is an embedded submanifold of the smooth manifold~$X$. 
\end{definition}

\begin{remark}
	\label{submanif_rem} 
	\normalfont 
	We collect a few elementary facts for later use. 
	\begin{enumerate}[{\rm(i)}]
		\item\label{submanif_rem_item1} 
		It follows by Lemma~\ref{submanif_lemma} that the notion of embedded \Q-submanifold is correctly defined, in the sense that it does not depend on the choice of the \Q-atlas in Definition~\ref{submanif_def}. 
		\item\label{submanif_rem_item2} 
		It is well known that every embedded submanifold of a smooth manifold has the property of initial submanifold as in \cite[I.2.9]{KlMiSl93}. 
		Therefore every embedded \Q-submanifold $S_0\subseteq S$ is an immersed \Q-submanifold 
		and moreover, for every \Q-atlas $\pi\colon X\to S$, the mapping $\pi\vert_{\pi^{-1}(S_0)}\colon \pi^{-1}(S_0)\to S_0$ is in turn a \Q-atlas 
		by \cite[Ch. 1, \S 2, no. 4, Prop., page 242]{Br73}. 
		\item\label{submanif_rem_item3}  If $S,S'$ are \Q-manifolds and  $\psi\colon S\to S'$ is a \Q-diffeomorphism (i.e., bijective and \Q-\'etale) and $S_0\subseteq S$ is an embedded \Q-submanifold, then $\psi(S_0)\subseteq S'$ is again an embedded \Q-submanifold. 
		
		In fact, let $\pi\colon X\to S$ be any \Q-atlas. 
		Since $\psi\colon S\to S'$ is \Q-\'etale, it follows that $\psi\circ \pi\colon X\to S'$ is a \Q-atlas of $S'$, by \cite[Ch. 1, \S 2, no. 2, Lemme, page 241]{Br73}. 
		Since $\psi$ is bijective, we have $(\psi\circ \pi)^{-1}(\psi(S_0))=\pi^{-1}(\psi^{-1}(\psi(S_0)))=\pi^{-1}(S_0)$, which is an embedded submanifold of $X$ by the hypothesis on $S_0$. 
		Since $\psi\circ \pi\colon X\to S'$ is a \Q-atlas of $S'$, it thus follows that $\psi(S_0)\subseteq S'$ is an embedded \Q-submanifold, 
		by \eqref{submanif_rem_item1}  above. 
	\end{enumerate}
\end{remark}

\begin{remark}
	\label{pseudodiscr_gen} 
	\normalfont 
	\emph{Every singleton subset of a \Q-manifold is an immersed \Q-sub\-manifold of dimension zero.} 
	In order to prove that fact it suffices to prove the following assertion: 
	If $\pi\colon X\to S$ is a \Q-chart and $s_0\in \pi(S)$, then for every connected smooth manifold $T$ and smooth mapping $f\colon T\to X$ with $f(T)\subseteq \pi^{-1}(s_0)$ 
	the mapping $f$ is constant. 
	This would show that $\pi^{-1}(s_0)$ is an immersed submanifold of $X$ of dimension zero, and then \cite[Ch. 1, \S 2, no. 4, Prop., page 242]{Br73} is applicable. 
	
	In fact, let $t_0\in T$ arbitrary and denote $x_0:=f(t_0)$. 
	If we define the constant mapping $f_0\colon T\to X$, $f(t):=x_0$, then we have 
	$\pi\circ f=\pi\circ f_0$ by the hypothesis $f(T)\subseteq \pi^{-1}(s_0)$. 
	Then, since $\pi\colon X\to S$ is a \Q-chart, it follows that the set $T_0:=\{t\in T\mid f(t)=f_0(t)\}$ is open in $T$. 
	We also have $t_0\in T_0$, hence $T_0\ne\emptyset$. 
	Moreover, since both $f$ and $f_0$ are continuous and $X$ is Hausdorff, 
	it follows that the subset $T_0\subseteq T$ is also closed. 
	Since the smooth manifold $T$ is connected, we then obtain $T_0=T$, hence $f=f_0$. 
	That is, $f$ is a constant mapping. 
\end{remark}

\begin{proposition}
	\label{submanif_prop}
	If $S$ is a \Q-manifold, then every singleton subset of $S$ is an embedded \Q-submanifold if and only if $S$ is a smooth  manifold. 
\end{proposition}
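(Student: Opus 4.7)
The forward implication is immediate: every point of a smooth manifold is a $0$-dimensional embedded smooth submanifold, hence by Definition~\ref{submanif_def} an embedded \Q-submanifold.

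For the converse, fix a \Q-atlas $\pi\colon M\to S$. By Remark~\ref{pseudodiscr_gen} each singleton $\{s_0\}\subseteq S$ is an immersed \Q-submanifold of dimension zero, and the hypothesis upgrades this to the statement that $\pi^{-1}(s_0)\subseteq M$ is a $0$-dimensional embedded smooth submanifold; equivalently, for every $x_0\in\pi^{-1}(s_0)$ there is an open neighborhood $W$ of $x_0$ in~$M$ with $W\cap\pi^{-1}(s_0)=\{x_0\}$.

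My strategy is to apply Godement's quotient criterion to the graph equivalence relation
\begin{equation*}
R:=\{(x,y)\in M\times M\mid\pi(x)=\pi(y)\}.
\end{equation*}
Condition~(a) of Definition~\ref{Q_def} supplies, for each $(x,y)\in R$, a local diffeomorphism $h\colon U\to V$ between open subsets of~$M$ with $h(x)=y$ and $\pi\circ h=\pi|_{U}$, whose graph $\Gamma(h)$ is a smooth embedded submanifold of $M\times M$ of dimension $\dim M$ sitting inside~$R$. Condition~(b) applied to pairs of such local diffeomorphisms forces the germ of~$h$ at~$x$ to be unique. The central goal is then to show that on sufficiently small neighborhoods $U\times V$ of~$(x,y)$ one has $R\cap(U\times V)=\Gamma(h)$, so that $R$ becomes a smooth embedded submanifold of $M\times M$ of dimension $\dim M$ whose two projections $R\to M$ are local diffeomorphisms.

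I expect the hardest step to be precisely this equality $R\cap(U\times V)=\Gamma(h)$: given $(x',y')\in R$ close to $(x,y)$, the points $y'$ and $h(x')$ both lie in the single fiber $\pi^{-1}(\pi(x'))$ and are both close to~$y$, so the desired $y'=h(x')$ amounts to a uniform local separation of every nearby fiber near~$y$, not just the discreteness of the single fiber $\pi^{-1}(\pi(y))$ coming from the hypothesis. My plan to obtain this uniform separation is by contradiction from condition~(b): assuming sequences $y_n\ne z_n$ with $\pi(y_n)=\pi(z_n)$ and $y_n,z_n\to y$, the $\pi$-preserving local diffeomorphisms furnished by~(a) can be combined via a smoothing argument in a chart around~$y$ into either a single smooth $\pi$-preserving map $k$ defined near~$y$ with $k(y)=y$ and $k\ne\id$ nearby, or a pair of smooth maps $f,g\colon T\to M$ with $\pi\circ f=\pi\circ g$, $f(0)=g(0)=y$ and $f\ne g$ on every neighborhood of~$0$; either alternative directly contradicts condition~(b). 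Once uniform separation is in place, Godement's criterion endows $S=M/R$ with a smooth manifold structure for which $\pi\colon M\to S$ is a surjective local diffeomorphism. The map $\pi$ is then a \Q-atlas both for the originally given \Q-manifold structure on~$S$ and for the one induced by this new smooth manifold structure, so by the very definition of a \Q-manifold the two structures coincide and $S$ is a smooth manifold in the usual sense.
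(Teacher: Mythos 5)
Your forward implication and the first part of your converse agree with the paper's proof. The paper, too, reduces everything to showing that each fiber $\pi^{-1}(s_0)$ is a $0$-dimensional (hence discrete) embedded submanifold of $M$: it takes a submanifold chart $u$ with $u(U\cap\pi^{-1}(s_0))=u(U)\cap\Vc_0$ and applies the constancy statement of Remark~\ref{pseudodiscr_gen} to $u^{-1}$ restricted to a connected open piece of $u(U)\cap\Vc_0$, forcing $\dim\Vc_0=0$. You assert this upgrade in one line where the paper spells out the small argument, but the idea is identical. After that the routes diverge: the paper concludes that the \Q-atlas is locally injective and invokes \cite{BPZ19}, Prop.~2.6, whereas you propose to rebuild the manifold structure on $S$ via Godement's criterion applied to the relation $R$.

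The genuine gap is in the step you yourself single out as the hardest one, namely passing from discreteness of the single fiber $\pi^{-1}(\pi(x_0))$ near $x_0$ to injectivity of $\pi$ on a whole neighborhood of $x_0$ (equivalently, $R\cap(U\times V)=$ graph of $h$). The ``smoothing argument'' that is supposed to deliver this is not carried out, and the obstruction to carrying it out is concrete: condition~\eqref{Q_ex_b} of Definition~\ref{Q_def} can only be applied to a pair of smooth maps $f,g\colon T\to M$ that are defined on an entire manifold $T$ and satisfy $\pi\circ f=\pi\circ g$ \emph{everywhere} on $T$. What the failure of local injectivity actually provides is a sequence of pairs $(y_n,z_n)\in R$ with $y_n\ne z_n$ converging to $(y,y)$, together with $\pi$-preserving local diffeomorphisms $h_n$ furnished by condition~\eqref{Q_ex_a}, each defined only on a possibly shrinking neighborhood of $y_n$ and with no compatibility between the germs of $h_n$ for different $n$. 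There is no evident way to splice these into a single smooth pair $(f,g)$ with $\pi\circ f=\pi\circ g$ on a connected neighborhood of the limit point: off the sets where you copy some $h_n$ you have no control over the relation $\pi\circ f=\pi\circ g$, and interpolating between $h_n$ and $h_{n+1}$ produces values whose $\pi$-images you cannot constrain. Since every subsequent step (the identification $R\cap(U\times V)=$ graph of $h$, the étaleness of the projections, and the application of Godement) rests on this uniform separation, the proposal as written does not close. Note that this is strictly more than what the paper's own displayed argument establishes (which is only $U\cap\pi^{-1}(s_0)=\{x_0\}$, i.e., fiber discreteness, before handing off to the cited result), so you cannot recover the missing step by pointing back to the paper's computation either.
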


\begin{proof}
	If $S$ is a smooth manifold, then it is clear that for every $s\in S$, using any local chart around $s$, one can easily see that the singleton subset $\{s\}\subseteq S$ is an embedded submanifold. 
	
	Conversely, let us assume that every singleton subset of $S$ is an embedded \Q-submanifold. 
	In order to prove that $S$ is a smooth manifold it suffices to prove that an arbitrary \Q-atlas $\pi\colon X\to S$ is locally injective. 
	(See \cite[Prop. 2.6]{BPZ19}.) 
	To this end let $x_0\in X$ arbitrary and denote $s_0:=\pi(x_0)\in S$. 
	The singleton subset $\{s_0\}\subseteq S$ is an embedded \Q-submanifold by hypothesis, 
	hence, by Remark~\ref{submanif_rem}, the subset $X_0:=\pi^{-1}(s_0)\subseteq X$ is an embedded submanifold of the smooth manifold~$X$. 
	Then there exist a local chart $u\colon U\to \Vc$ and a linear subspace $\Vc_0\subseteq\Vc$ with $u(U\cap X_0)=u(U)\cap\Vc_0$, where $U\subseteq X$ is an open subset with $x_0\in U$, $\Vc$ is a finite-dimensional real vector space, and $u(U)\subseteq \Vc$ is an open subset. 
	
	We claim that $\dim\Vc_0=0$.  
	In fact, $u(U)\cap\Vc_0$ is an open nonempty subset of the vector space $\Vc_0$ with $u(x_0)\in u(U\cap X_0)=u(U)\cap\Vc_0$. 
	If $W_0$ denotes the connected component of $u(U)\cap\Vc_0$ with $u(x_0)\in W_0$, 
	then $W_0$ is again an open subset of $\Vc_0$ with $u(x_0)\in W_0$. 
	Since $\Vc_0\subseteq \Vc$ is a linear subspace of $\Vc$, it then directly follows that $W_0$ is an embedded submanifold of the open subset $u(U)$ of the linear space~$\Vc$.
	If we define $f:=u^{-1}\vert_{W_0}\colon W_0\to X$, then $f$ is smooth (since $u$ is a local chart, hence its inverse  $u^{-1}\colon u(U)\to X$ is smooth). 
	Moreover, 
	$$f(W_0)\subseteq u^{-1}(W_0)\subseteq u^{-1}(u(U)\cap\Vc_0)\subseteq U\cap X_0\subseteq X_0=\pi^{-1}(s_0)$$
	hence Remark~\ref{pseudodiscr_gen} implies that $f$ is constant. 
	Then, since $f=u^{-1}\vert_{W_0}$ is injective, it follows that the set $W_0$ is a singleton. 
	Recalling that $W_0$ is an open subset of the linear space $\Vc_0$, we obtain $\dim\Vc_0=0$, as claimed. 
	
	Now the equality  $u(U\cap X_0)=u(U)\cap\Vc_0$ implies $U\cap X_0=\{x_0\}$. 
	Since   $X_0=\pi^{-1}(s_0)$, it follows that the mapping $\pi\vert_U\colon U\to S$ 
	is injective. 
	That is, the \Q-atlas $\pi\colon X\to S$ is injective on the neighborhood $U$ of $x_0$. 
	Thus $\pi\colon X\to S$ is locally injective, and  the assertion follows by \cite[Prop. 2.6]{BPZ19}, as already mentioned above.  
\end{proof}

\begin{corollary}
	\label{submanif_cor}
	If $G$ is a \Q-group, then the singleton subset $\{\1\}\subseteq G$ is an embedded \Q-submanifold if and only if $G$ is a Lie group. 
\end{corollary}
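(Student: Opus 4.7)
The plan is to deduce the corollary from Proposition~\ref{submanif_prop} by exploiting the homogeneity of $G$ under left translations.

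For the forward implication, if $G$ is a Lie group then its underlying \Q-manifold is a smooth manifold, so Proposition~\ref{submanif_prop} applied to $S:=G$ immediately gives that every singleton subset of $G$ is an embedded \Q-submanifold; in particular this holds for $\{\1\}$.

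For the converse, assume $\{\1\}\subseteq G$ is an embedded \Q-submanifold. The key observation is that, by Proposition~\ref{submanif_prop}, it suffices to show that \emph{every} singleton subset of $G$ is an embedded \Q-submanifold, and this follows by translating $\{\1\}$ throughout $G$. Concretely, for every $g\in G$ the left translation
\begin{equation*}
L_g\colon G\to G,\quad L_g(h):=gh,
\end{equation*}
is a \Q-diffeomorphism of $G$: it is \Q-smooth because the group operation $\bm$ on $G$ is \Q-smooth, and its inverse is the \Q-smooth map $L_{g^{-1}}$. Since $L_g(\{\1\})=\{g\}$, Remark~\ref{submanif_rem}\eqref{submanif_rem_item3} shows that $\{g\}\subseteq G$ is an embedded \Q-submanifold for every $g\in G$. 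Proposition~\ref{submanif_prop} then yields that $G$ is a smooth manifold.

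Finally, since the underlying \Q-manifold of $G$ is a smooth manifold, the \Q-smoothness of the group operation and inversion coincides with smoothness in the usual sense, so $G$ is a Lie group. The only step that is not entirely formal is the use of Remark~\ref{submanif_rem}\eqref{submanif_rem_item3}, which in turn relies on Lemma~\ref{submanif_lemma} (independence of the \Q-atlas in the definition of an embedded \Q-submanifold); apart from that, the argument is a direct transfer of the singleton property by left translation.
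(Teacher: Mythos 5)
Your proof is correct and follows essentially the same route as the paper: translate $\{\1\}$ by the \Q-diffeomorphisms $L_g$ to see that every singleton is an embedded \Q-submanifold, then invoke Proposition~\ref{submanif_prop} in both directions. (You even cite the correct item of Remark~\ref{submanif_rem} for the translation step, where the paper's own proof contains a small reference typo.)
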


\begin{proof}
	If $\{\1\}\subseteq G$ is an embedded \Q-submanifold of $G$, then for every $g\in G$ the singleton subset $L_g(\{\1\})=\{g\}\subseteq G$ is again an embedded \Q-submanifold by Remark~\ref{submanif_rem}\eqref{submanif_rem_item2} since the translation $L_g\colon G\to G$, $k\mapsto gk$, is a \Q-diffeomorphism. 
	Thus Proposition~\ref{submanif_prop} is applicable and implies that $G$ is a smooth manifold, and then $G$ is a Lie group. 
	
	The converse assertion directly follows by Proposition~\ref{submanif_prop}, too. 
\end{proof}

\section{On vector distributions on \Q-manifolds} 
\label{AppB}

In this section we establish some properties of vector distributions on \Q-manifolds 
that are needed in Section~\ref{Sect6} in order to establish the smoothness property of the Lie algebroid morphism that corresponds to a \Q-groupoid morphism via the Lie functor. 

Let $S$ be a \Q-manifold with a \Q-atlas $\pi\colon M\to S$. 

\begin{definition}
	\label{distrib1}
	\normalfont
	A subset $\Vc\subseteq TS$ is called a \emph{distribution} on the \Q-manifold~$S$ 
	if $(T\pi)^{-1}(\Vc)\subseteq TM$ is a distribution on the smooth manifold $M$, 
	that is, a subbundle of the tangent vector bundle $TM$. 
	For every $s\in S$ we denote $\Vc_s:=\Vc\cap T_sS$. 
	
	If this is the case and $N$ is a smooth manifold, then for every \Q-smooth function $f\colon N\to S$ we define the \emph{pullback} 
	\begin{equation}
		\label{distrib1_eq1}
		f^{!}(\Vc):=N\times_S\Vc
		=\{(n,v)\in N\times\Vc\mid v\in \Vc_{f(n)}\}.
	\end{equation}
\end{definition}

\begin{example}
	\label{distrib2}
	\normalfont 
	Let $\tau_M\colon TM\to M$ be the tangent bundle of the smooth manifold~$M$. 
	We describe the pullback of the distribution $\Vc\subseteq TS$ through the \Q-atlas $\pi\colon M\to S$, which is in particular a the \Q-smooth mapping. 
	Namely, it is easily seen that the mapping 
	$$\theta\colon (T\pi)^{-1}(\Vc)\to \pi^{!}(\Vc),\quad \theta(w):=(\tau_M(w),(T\pi)(w))$$ 
	is a fibrewise linear bijection, 
	with its inverse mapping 
	$$\theta^{-1}\colon \pi^{!}(\Vc)\to (T\pi)^{-1}(\Vc), \quad 
	\theta^{-1}(m,v):=(T_m\pi)^{-1}(v).$$
	Therefore $\pi^{!}(\Vc)$ has the unique structure of a locally trivial, smooth, vector bundle for which $\theta$ is a vector bundle isomorphism over the base map $\id_M$. 
\end{example}

\begin{remark}
	\label{distrib2.5}
	\normalfont
	In Example~\ref{distrib2}, 
	let us denote
	\begin{equation}
		\label{distrib3_proof_eq1}
		\Wc:=(T\pi)^{-1}(\Vc)\subseteq TM, 
	\end{equation}
	which is a locally trivial, smooth, vector bundle over $M$. 
	We note the following facts, for later use: 
	\begin{enumerate}[{\rm(i)}]
		\item The map $T\pi\colon TM\to TS$ is a \Q-atlas by \cite[Ch. 1, \S 1, no. 4, Prop.]{Br73}
		while the subbundle $\Wc\subseteq TM$ is in particular an embedded submanifold, hence 
		$(T\pi)\vert_\Wc\colon\Wc\to\Vc$ is a \Q-atlas for $\Vc$ 
		by Remark~\ref{submanif_rem}\eqref{submanif_rem_item2}, 
		and then $\Vc$ is a \Q-manifold. 
		\item Since the subbundle $\Wc\subseteq TM$ is an embedded submanifold of the smooth vector bundle $TM$, it follows that the subset $\Vc\subseteq TS$ is an embedded \Q-submanifold, 
		hence an immersed \Q-submanifold, of $TS$, 
		cf. Remark~\ref{submanif_rem}\eqref{submanif_rem_item2}.  
		\item If $\tau_S\colon TS\to S$ is the canonical projection, 
		then $\tau_S\circ (T\pi)\vert_\Wc\colon \Wc\to S$ is \Q-smooth, 
		which shows that $\tau_S\vert_{\Vc}\colon\Vc\to S$ is \Q-smooth.
	\end{enumerate} 
\end{remark}

\begin{proposition}
	\label{distrib3}
	If $\Vc\subseteq TS$ is a distribution and $N$ is a smooth manifold, then for every \Q-smooth function $f\colon N\to S$ the mapping $p_f\colon f^{!}(\Vc)\to N$, $p(n,v):=n$, is a locally trivial, smooth, vector bundle. 
\end{proposition}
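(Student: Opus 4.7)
The strategy is to use local lifts of $f$ through the \Q-atlas $\pi\colon M\to S$ in order to present $f^{!}(\Vc)$ locally as a pullback of the smooth vector bundle $\Wc:=(T\pi)^{-1}(\Vc)\subseteq TM$ of Definition~\ref{distrib1}, and then to glue these local trivializations using property~\eqref{Q_ex_a} of \Q-charts.

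First, since $f\colon N\to S$ is \Q-smooth and $\pi\colon M\to S$ is a \Q-atlas, for every $n_0\in N$ and every $m_0\in\pi^{-1}(f(n_0))$ one can find an open neighborhood $U\subseteq N$ of $n_0$ and a smooth lift $\widehat{f}\colon U\to M$ with $\widehat{f}(n_0)=m_0$ and $\pi\circ\widehat{f}=f\vert_U$. Using $\widehat{f}$, I would define
$$\Phi_{\widehat{f}}\colon \widehat{f}^{*}\Wc\to f^{!}(\Vc)\vert_U, \qquad (n,w)\mapsto(n,(T\pi)(w)).$$
Since $T_m\pi\colon T_mM\to T_{\pi(m)}S$ is a linear isomorphism for every $m\in M$ (because $\pi$ is a \Q-atlas) and since $\Wc_m=(T_m\pi)^{-1}(\Vc_{\pi(m)})$, the map $\Phi_{\widehat{f}}$ is a fiberwise linear bijection over $\id_U$, with inverse $(n,v)\mapsto(n,(T_{\widehat{f}(n)}\pi)^{-1}(v))$. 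The smooth vector bundle structure of the classical pullback $\widehat{f}^{*}\Wc\to U$ can therefore be transported to $p_f^{-1}(U)=f^{!}(\Vc)\vert_U$, providing a local trivialization of $p_f\colon f^{!}(\Vc)\to N$.

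The main point, and the step I expect to require the most care, is to show that these smooth structures are independent of the choice of local lift, so that they glue into a global smooth vector bundle structure on $f^{!}(\Vc)$. Suppose $\widehat{f}_1\colon U_1\to M$ and $\widehat{f}_2\colon U_2\to M$ are two local lifts of $f$ with $n_0\in U_1\cap U_2$. By the \Q-chart property~\eqref{Q_ex_a} applied to $\pi$ at the points $\widehat{f}_1(n_0)$ and $\widehat{f}_2(n_0)$ (which satisfy $\pi(\widehat{f}_1(n_0))=\pi(\widehat{f}_2(n_0))=f(n_0)$), there exist open neighborhoods $V_j$ of $\widehat{f}_j(n_0)$ in $M$ and a diffeomorphism $h\colon V_1\to V_2$ with $h(\widehat{f}_1(n_0))=\widehat{f}_2(n_0)$ and $\pi\circ h=\pi\vert_{V_1}$. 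Then $h\circ\widehat{f}_1$ and $\widehat{f}_2$ are two smooth maps from a neighborhood of $n_0$ into $M$ which agree at $n_0$ and satisfy $\pi\circ(h\circ\widehat{f}_1)=\pi\circ\widehat{f}_2$, so by property~\eqref{Q_ex_b} of the \Q-chart $\pi$, they coincide on some open neighborhood $U_0\subseteq U_1\cap U_2$ of $n_0$.

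Consequently, on $U_0$, the transition map $\Phi_{\widehat{f}_2}^{-1}\circ\Phi_{\widehat{f}_1}\colon \widehat{f}_1^{*}\Wc\vert_{U_0}\to\widehat{f}_2^{*}\Wc\vert_{U_0}$ is given by $(n,w)\mapsto(n,(Th)(w))$, which is smooth because $h$ is a diffeomorphism and $Th$ preserves $\Wc$ (as $\pi\circ h=\pi$ implies $T\pi\circ Th=T\pi$, so $Th(\Wc\vert_{V_1})\subseteq \Wc\vert_{V_2}$). This establishes the cocycle compatibility, yielding a well-defined locally trivial smooth vector bundle structure on $p_f\colon f^{!}(\Vc)\to N$ with typical fiber isomorphic to the fibers of~$\Wc$.
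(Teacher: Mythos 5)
Your proof is correct and follows essentially the same route as the paper: locally lift $f$ through the \Q-atlas $\pi\colon M\to S$ and transport the smooth vector bundle structure of the classical pullback $\widehat{f}^{*}\Wc$ of $\Wc=(T\pi)^{-1}(\Vc)$ via the fiberwise linear bijection $(n,w)\mapsto(n,(T\pi)(w))$. The paper compresses the gluing issue into the phrase ``the assertion has a local character on $N$,'' whereas you make explicit, via properties (a) and (b) of \Q-charts, why two local lifts agree near any point after composing with a transversal diffeomorphism and hence why the resulting local trivializations are compatible---this is a worthwhile elaboration rather than a deviation.
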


\begin{proof}
	The assertion has a local character on $N$ hence, since  $f\colon N\to S$ is \Q-smooth, 
	we may assume without loss of generality that there exists a smooth lift $\widehat{f}\colon N\to M$ with $f=\pi\circ \widehat{f}$. 
	Since $\Wc$ in \eqref{distrib3_proof_eq1} is a locally trivial, smooth, vector bundle over~$M$, 
	by Example~\ref{distrib2}, 
	its pullback 
	\begin{align*}
		\widehat{f}^{!}(\Wc)
		&=\{(n,w)\in N\times TM\mid 
		w\in \Wc_{\widehat{f}(n)}\} 
	\end{align*}
	is a locally trivial, smooth, vector bundle over~$N$. 
	On the other hand 
	\begin{align*}
		f^{!}(\Vc)
		& =\{(n,v)\in N\times\Vc\mid v\in \Vc_{\pi(\widehat{f}(n))}\} \\
		& =\{(n,v)\in N\times\Vc\mid v\in (T_{\widehat{f}(n)}\pi)(\Wc_{\widehat{f}(n)})\} 
	\end{align*}
	and the mapping $T\pi\colon TM\to TS$ is a fibrewise linear isomorphism, 
	hence the mapping 
	\begin{equation}
		\label{distrib3_proof_eq2}
		\theta_f\colon \widehat{f}^{!}(\Wc)\to f^{!}(\Vc),\quad (n,w)\mapsto (n, (T_{\widehat{f}(n)}\pi)(w))
	\end{equation}
	is bijective and fibrewise isomorphism. 
	This implies that $f^{!}(\Vc)$ has the unique structure of a locally trivial, smooth, vector bundle for which $\theta_f$ is a vector bundle isomorphism over the base map $\id_M$. 
\end{proof}

\begin{proposition}
	\label{distrib4}
	In Proposition~\ref{distrib3}, if $f\colon N\to S$ is an immersed submanifold of the \Q-manifold~$S$, 
	then the mapping $\widetilde{f}\colon f^{!}(\Vc)\to\Vc$, $\widetilde{f}(n,v):=v$, 
	an immersed submanifold of the \Q-manifold~$\Vc$. 
\end{proposition}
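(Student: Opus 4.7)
The plan is to work locally on $N$, using a smooth lift $\widehat{f}\colon U\to M$ with $f\vert_U=\pi\circ\widehat{f}$ (which exists because $f$ is \Q-smooth and $\pi\colon M\to S$ is a \Q-atlas), and to exploit the vector bundle isomorphism $\theta_f\colon \widehat{f}^{!}(\Wc)\to f^{!}(\Vc)$ from \eqref{distrib3_proof_eq2} together with the fact that $T\pi\vert_\Wc\colon\Wc\to\Vc$ is a \Q-atlas of~$\Vc$ (Remark~\ref{distrib2.5}). Throughout, I will repeatedly use that $\widehat{f}\colon U\to M$ is itself a smooth immersion: indeed $T\pi$ is fiberwise injective (even a fiberwise isomorphism onto its image) because $\pi$ is a \'etale in the \Q-sense, and since $Tf=T\pi\circ T\widehat{f}$ is fiberwise injective by hypothesis, so is $T\widehat{f}$.

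First, I would verify that $\widetilde{f}$ is \Q-smooth. By definition of $\theta_f$, the composition $\widetilde{f}\circ\theta_f\colon \widehat{f}^{!}(\Wc)\to\Vc$ is given by $(n,w)\mapsto (T\pi)(w)$, that is, the smooth projection $\mathrm{pr}_\Wc\colon \widehat{f}^{!}(\Wc)\to\Wc$ followed by the \Q-atlas $T\pi\vert_\Wc$. This composition is \Q-smooth, and since $\theta_f$ is a smooth isomorphism of vector bundles over~$U$, the restriction of $\widetilde{f}$ to $f^{!}(\Vc)$ over $U$ is \Q-smooth, and \Q-smoothness of $\widetilde{f}$ follows.

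Second, I would check that $\widetilde{f}$ is a \Q-immersion. Working in the same local picture, the tangent map of the projection $\mathrm{pr}_\Wc\colon \widehat{f}^{!}(\Wc)\to\Wc$ at a point $(n_0,w_0)$ sends a pair $(X,W)\in T_{n_0}N\times T_{w_0}\Wc$ (subject to the constraint $T_{n_0}\widehat{f}(X)=T_{w_0}\tau_M(W)$) to~$W$. If $W=0$, then $T_{n_0}\widehat{f}(X)=0$, whence $X=0$ since $\widehat{f}$ is an immersion. Hence $\mathrm{pr}_\Wc$ is a smooth immersion, and composing with the \Q-\'etale map $T\pi\vert_\Wc$ yields that $\widetilde{f}\circ\theta_f$ is a \Q-immersion; as $\theta_f$ is a smooth isomorphism, $\widetilde{f}$ is a \Q-immersion as well.

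Finally, I would establish the universal factoring property. Let $T$ be a smooth manifold and let $h\colon T\to f^{!}(\Vc)$ be a set-theoretic mapping such that $\widetilde{f}\circ h\colon T\to\Vc$ is \Q-smooth. Writing $h(t)=(h_1(t),\widetilde{f}(h(t)))$, the map $\tau_S\circ\widetilde{f}\circ h\colon T\to S$ is \Q-smooth (where $\tau_S\vert_\Vc$ is \Q-smooth by Remark~\ref{distrib2.5}) and takes values in $f(N)$ by the very definition of $f^{!}(\Vc)$, specifically $\tau_S(\widetilde{f}(h(t)))=f(h_1(t))$. Because $f\colon N\to S$ is an immersed \Q-submanifold, $h_1\colon T\to N$ is smooth. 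Combined with the smoothness of $\widetilde{f}\circ h$ as a map into the vector bundle $f^{!}(\Vc)\to N$, this yields smoothness of~$h$, which is precisely the universality property of an immersed \Q-submanifold in the sense of \cite[Ch.~1, \S 2, no.~2, page~240]{Br73}.

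The delicate point is the last step: the factoring $h_1$ must be determined by the \Q-smooth data $\widetilde{f}\circ h$ alone, and this uses crucially that in $f^{!}(\Vc)$ the $N$-component of a pair $(n,v)$ is constrained by $v\in\Vc_{f(n)}$, so that once one knows $\tau_S\circ\widetilde{f}\circ h$ as a \Q-smooth map with image in $f(N)$, one can use the immersed-submanifold structure of $f$ to lift it smoothly.
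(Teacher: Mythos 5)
Your proposal is correct and follows essentially the same route as the paper's proof: localize on $N$, lift $f$ to $\widehat{f}$, transport the problem through the vector bundle isomorphism $\theta_f$ of \eqref{distrib3_proof_eq2}, observe that the projection $(n,w)\mapsto w$ is a smooth immersive lift with respect to the \Q-atlas $(T\pi)\vert_\Wc$ of $\Vc$, and then verify the initial submanifold property via $\tau_S\vert_\Vc$ and the initial property of $f$. The only (trivial) point the paper records that you omit is the injectivity of $\widetilde{f}$, which follows at once from the injectivity of $f$ since $\tau_S(\widetilde{f}(n,v))=f(n)$.
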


\begin{proof}
	Using the fact that $f$ is injective, it is easily checked that $\widetilde{f}$ is injective, too. 
	
	In order to check that  $\widetilde{f}$ is \Q-smooth and its tangent map  at any point $(n_0,v_0)\in f^{!}(\Vc)$ is injective, we may assume, restricting $f$ to a suitable neighborhood of $n_0\in N$, that there exists a smooth lift 
	$\widehat{f}\colon N\to M$ with $f=\pi\circ \widehat{f}$. 
	Then, using the isomorphism of smooth vector bundles $\theta_f$ in~\eqref{distrib3_proof_eq2}, it is necessary and sufficient to show that 
	the mapping 
	\begin{equation}
		\label{distrib4_proof_eq1}
		\widetilde{f}\circ\theta_f \colon \widehat{f}^{!}(\Wc)\to \Vc,\quad 
		(n,w)\mapsto (T_{\widehat{f}(n)}\pi)(w)
	\end{equation}
	is a \Q-immersion. 
	Recalling the definition of $\Wc$ in \eqref{distrib3_proof_eq1}, 
	we see that the mapping $(T\pi)\vert_\Wc\colon\Wc\to\Vc$ is a \Q-atlas for $\Vc$ 
	and the smooth mapping 
	\begin{equation}
		\label{distrib4_proof_eq2}
		\widehat{\widetilde{f}\circ\theta_f} \colon \widehat{f}^{!}(\Wc)\to \Wc,\quad 
		(n,w)\mapsto w
	\end{equation}
	is a lift of $\widetilde{f}\circ\theta_f$ with respect to that atlas. 
	Thus $\widetilde{f}\circ\theta_f \colon \widehat{f}^{!}(\Wc)\to \Vc$ is \Q-smooth. 
	Moreover, since $f$ is an immersion, its lift $\widehat{f}$ is an immersion, 
	and then it easily follows that the lift $\widehat{\widetilde{f}\circ\theta_f}$ is an immersion. 
	Hence the mapping $\widetilde{f}\circ\theta_f$ in \eqref{distrib4_proof_eq1} is a \Q-immersion.
	
	It remains to check that the mapping $\widetilde{f}$ in the statement has the initial manifold property:  
	If $N_0$ is a smooth manifold and $\varphi\colon N_0\to f^{!}(\Vc)$ is a mapping such that $\widetilde{f}\circ\varphi\colon N_0\to \Vc$ is \Q-smooth, then $\varphi$ is smooth. 
	To this end, let $\tau_S\colon TS\to S$ be again the canonical projection. 
	Then, by Remark~\ref{distrib2.5},  the mapping $\tau_S\vert_{\Vc}\colon\Vc\to S$ is \Q-smooth, hence $\tau_S\circ \widetilde{f}\circ\varphi\colon N_0\to S$ is \Q-smooth. 
	By the definition of $f^{!}(\Vc)$ in~\eqref{distrib1_eq1} we have $\tau_S\circ \widetilde{f}=f$, hence we obtain that $f\circ\varphi\colon N_0\to S$ is \Q-smooth. 
	Since $f\colon N\to S$ is an immersed submanifold, hence has the initial submanifold property, by definition, it then follows that $\varphi$ is smooth, and this completes the proof. 
\end{proof}

\subsection*{Acknowledgment} 
The numerous constructive remarks made by the Referee are gratefully acknowledged. 
We wish to thank Professor Peter Michor for pointing out to us the occurrence of quotients of by non-closed subgroups in his work on integrability of Lie algebra actions on manifolds, 
Professor Raymond Barre for pointing out several important references 
and Professor Jean-Pierre Magnot for useful comments on differential spaces. 
The work of the first-named author was supported by a grant of the Ministry of Research, Innovation and Digitization, CNCS--UEFISCDI, project number PN-IV-P1-PCE-2023-0264, within PNCDI IV. 
The second-named author acknowledges financial support from 
the Centre Francophone en Math\'ematiques de Bucarest and the GDRI ECO-Math.

\end{document}